\newtheorem{theorem}{Theorem}[section]
\newtheorem{corollary}[theorem]{Corollary}
\newtheorem{lemma}[theorem]{Lemma}
\newtheorem{proposition}[theorem]{Proposition}
\theoremstyle{definition}
\newtheorem{definition}[theorem]{Definition}
\newtheorem{remark}[theorem]{Remark}
\newtheorem*{notation}{Notation}
\newcommand{\ep}{\varepsilon}
\newcommand{\dd}{\mathop{}\!\mathrm{d}}
\newcommand{\suchthat}{\,:\,}
\newcommand{\mres}{\mathbin{\vrule height 1.6ex depth 0pt width
0.13ex\vrule height 0.13ex depth 0pt width 1.3ex}}
\newcommand{\test}{\varphi}
\newcommand{\opa}[1]{\mathcal{A}\left[ #1 \right]}
\newcommand{\opb}[1]{\mathcal{B}\left[ #1 \right]}
\newcommand{\opd}[1]{\mathcal{D}\left[ #1 \right]}
\newcommand{\opastar}[1]{\mathcal{A}^{*}\left[ #1 \right]}
\newcommand{\opbstar}[1]{\mathcal{B}^{*}\left[ #1 \right]}
\newcommand{\reacc}{\mathcal{G}}
\newcommand{\coreacc}{\mathcal{C}}
\newcommand{\entropy}{\mathcal{H}}
\newcommand{\energy}[1]{\mathcal{E}\left[ #1 \right]}
\newcommand{\initialvalue}{\mu_{\text{\normalfont in}}}
\newcommand{\initialvalueac}{f_{\text{\normalfont in}}}
\newcommand{\fixedpoint}{\mu}
\newcommand{\fixedpos}{\bar{\mu}}
\newcommand{\sizebound}{X_0}
\newcommand{\sizefunction}{q_{\delta}}
\newcommand{\chifunction}[1]{\chi_{\raisebox{-.5ex}{$I_{#1,\delta}$}}}
\newcommand{\naturals}{\mathbb{N}}
\newcommand{\real}{\mathbb{R}}
\newcommand{\measure}{\mathcal{M}_{b}(0,1]}
\newcommand{\measureprod}{\mathcal{M}_{b}\left((0,1]^2\right)}
\newcommand{\measurepos}{\mathcal{M}^{+}_{b}(0,1]}
\newcommand{\measureposbdy}{\mathcal{M}^{+}_{b}(1,\infty)}
\newcommand{\weakmeasure}{\mathcal{M}^{\text{w}\star}_{b}(0,1]}
\newcommand{\solspaceinfpos}{C\big([0,\infty); \mathcal{M}^{+}_{b}(0,1]\big)}
\newcommand{\weaksolspace}{C\big([0,T]; \mathcal{M}^{\text{\normalfont w}\star}_{b}(0,1]\big)}
\newcommand{\vanishing}{C_0(0,1]}
\newcommand{\bounded}{C_b(0,1]}
\newcommand{\bddborel}{\mathcal{B}_b(0,1]}
\newcommand{\lip}{\mathrm{Lip}_0(0,1]}
\newcommand{\abs}[1]{\left\lvert#1\right\rvert}
\newcommand{\norm}[1]{\left\lVert#1\right\rVert}
\newcommand{\variation}[1]{\| #1 \|_{\text{\normalfont TV}}}
\newcommand{\lipnorm}[1]{[#1]_{\text{\normalfont Lip}}}
\newcommand{\supnorm}[1]{\left\lVert#1\right\rVert_{\infty}}
\newcommand{\compactsuppx}{C_c(X)}
\newcommand{\vanishingx}{C_0(X)}
\newcommand{\boundedx}{C_b(X)}
\newcommand{\bddborelx}{\mathcal{B}_b(X)}
\newcommand{\measurex}{\mathcal{M}_{b}(X)}
\newcommand{\measureposx}{\mathcal{M}^{+}_{b}(X)}
\newcommand{\weakmeasurex}{\mathcal{M}^{\text{w}\star}_{b}(X)}
\newcommand{\upperbound}{K_0}
\newcommand{\lowerbound}{K_1}
\newcommand{\upfrag}{F_0}
\newcommand{\kbound}{K_{\infty}}
\newcommand{\powerlaw}{K_{\beta}}
\newcommand{\lowerboundbdy}{\theta_{\beta}}
\newcommand{\approxkernel}{K_{j}}
\newcommand{\approxsol}{\mu^{j}}
\newcommand{\approxsolsub}{\mu^{j_k}}
\newcommand{\approxreacc}{\mathcal{G}^{j}}
\newcommand{\mass}{\mathfrak{m}}
\newcommand{\massbdy}{\mathfrak{M}}
\newcommand{\momentbound}{M}
\newcommand{\integrate}[2]{\langle #1, #2 \rangle}
\newcommand{\weak}{\stackrel{\star}{\rightharpoonup}}
\newcommand{\bweak}{\stackrel{b}{\rightharpoonup}}
\newcommand{\lebesgue}{\mathcal{L}}
\numberwithin{equation}{section}
\title[Coagulation equations with boundary data]{Well-posedness for boundary value problems for coagulation-fragmentation equations}
\author{I\~{n}igo U. Erneta}
\address{I. U. Erneta:
	Universitat Polit\`ecnica de Catalunya and BGSMath, Departament de Matem\`{a}tiques, Diagonal 647, 08028 Barcelona, Spain}
\email{inigo.urtiaga@upc.edu}
\keywords{Coagulation-fragmentation equations, boundary value problem, existence of solutions, asymptotic behavior, detailed balance.}
\begin{document}
	
\begin{abstract}
We investigate a coagulation-fragmentation equation with boundary data, establishing the well-posedness of the initial value problem when the coagulation kernels are bounded at zero and showing existence of solutions for the singular kernels relevant in the applications.
We determine the large time asymptotic behavior of solutions,
proving that solutions converge exponentially fast to zero in the absence of fragmentation and stabilize toward an equilibrium if the boundary value satisfies a detailed balance condition.
Incidentally, we obtain an improvement in the regularity of solutions by showing the finiteness of negative moments for positive time.
\end{abstract}	


\maketitle
\section{Introduction}

Coagulation and fragmentation processes occur in large systems of particles where the constituents have a tendency to form clusters of matter.
This phenomenon has been widely observed in a variety of scientific areas, such as in atmospheric science (growth of aerosol clusters in the atmosphere \cite{drake1972general}), chemistry (study of colloids \cite{von1916drei}, formation of polymers \cite{stockmayer1943theory}), medicine (hematology \cite{perelson1984kinetics}) and astrophysics (formation of galaxies \cite{safronov1972evolution}).

A first mathematical model to describe coagulation was proposed by Smoluchowski \cite{von1916drei, smoluchowski1918versuch} which was later upgraded by Melzak to include the effect of
fragmentation \cite{melzak1957scalar}.
Assuming that the coagulation and fragmentation processes are entirely characterized by the volume (\emph{size}) of the clusters, the evolution of the cluster concentration is determined by an infinite system of ODEs, the (discrete) \emph{coagulation-fragmentation equation}
\begin{equation}\label{discrete_smoluchowski}
\begin{split}
\dfrac{\dd c_i(t)}{\dd t} &= \dfrac{1}{2}\sum_{j = 1}^{i - 1} K_{j, i - j} c_{j}(t) c_{i - j}(t) - \sum_{j = 0}^{\infty} K_{i, j} c_{i}(t)c_{j}(t) \\
&\quad - \dfrac{1}{2}\sum_{j = 1}^{i - 1} F_{j, i - j} c_{i}(t) + \sum_{j = 0}^{\infty} F_{i, j} c_{i + j}(t),
\end{split}
\end{equation}
where $c_i(t)$ denotes the concentration of clusters of size $i = 1, 2, \cdots$ at time $t$ and the coefficients $K_{i, j}$ and $F_{i, j}$ stand for the coagulation and fragmentation rates, respectively.

Recent investigations in atmospheric chemistry  \cite{mcgrath2012atmospheric, olenius2013free}
have led to the consideration of a truncated version of \eqref{discrete_smoluchowski}, namely
\begin{equation}\label{birth_death}
\begin{split}
\dfrac{\dd c_i(t)}{\dd t} &= \dfrac{1}{2}\sum_{j = 1}^{i - 1} K_{j, i - j} c_{j}(t) c_{i - j}(t) - \sum_{j = 0}^{N} K_{i, j} c_{i}(t)c_{j}(t) \\
&\quad - \dfrac{1}{2}\sum_{j = 1}^{i - 1} F_{j, i - j} c_{i}(t) + \sum_{j = 0}^{N} F_{i, j} c_{i + j}(t) + Q_i - S_i,
\end{split}
\end{equation}
where $i = 1, 2, \cdots, N$ for some $N\in \naturals$
and $Q_i,$ $S_i$ are additional source and sink terms
accounting for the interaction with larger clusters and the effect of supplementary mechanisms.
\footnote{Under the assumption that particles of size smaller than $N$ may merge with larger ones, the sink term $S_i$ could in principle depend on $c_i$ as in the case of binary coagulation where these quantities would be proportional.}
Solving \eqref{birth_death} corresponds to finding the concentration of \emph{small} clusters ($i \leq N$) as a function of time assuming that the concentration of the \emph{large} ones ($i > N$) is a given datum.
Thus, equation \eqref{birth_death} may be interpreted as a boundary value problem for the coagulation--fragmentation equation.

The motivation for the boundary value problem above arises from the study of the dynamics of aerosols in the atmosphere.
In the description of these systems there appear multiple physical phenomena that affect particles in a different manner according to their size
and which further act at different time scales.
The model described by a coagulation equation
together with prescribed values of the large cluster distribution
would yield the evolution of cluster sizes
in systems where the larger particles are not determined by coagulation but by a faster process.

For instance, as indicated in \cite{friedlander2000smoke}, Chapter 13 the size distribution of
particles in the atmosphere with diameter larger than 2.5 $\mu$m
(the \emph{coarse mode})
is due to mechanical processes such as wind, sedimentation and turbulent mixing, and not to the aggregation of smaller particles.
It is then natural to assume that the concentration of these larger particles is given and acts as a boundary value term for the smaller particles.

In this work, we study a continuous version of \eqref{birth_death}
and investigate the well-posedness of the associated Cauchy problem.
The continuous coagulation fragmentation equation corresponds to the integro-differential equation
\begin{equation}\label{coagulation_fragmentation_equation}
\begin{split}
\partial_t f(t, x) &= \frac{1}{2} \int_{0}^{x} K(x - y, y) f(t, x-y) f(t, y)\dd y - \int_{0}^{\infty} K(x, y) f(t, x) f(t, y)\dd y\\
&\quad-\frac{1}{2} \int_{0}^{x} F(x - y, y) f(t, x)\dd y + \int_{0}^{\infty} F(x, y) f(t, x + y)\dd y
\end{split}
\end{equation}
where the kernels $K,$ $F$ are nonnegative symmetric functions, the \emph{coagulation} and \emph{fragmentation kernels}, and $f(t, x)$ is the size distribution function for volumes
$x$ in $(0, \infty)$.
Assuming that the restriction of $f(t, \cdot)$ to large clusters $\{x > \sizebound\}$
is a given \textit{boundary datum} $g(t, x)$ for $x \in (\sizebound, \infty)$ the above equation turns into
\begin{equation}\label{bdy_coagulation_fragmentation_equation}
\begin{split}
\partial_t f(t, x) &= \frac{1}{2} \int_{0}^{x} K(x - y, y) f(t, x-y) f(t, y)\dd y - \int_{0}^{\sizebound} K(x, y) f(t, x) f(t, y)\dd y \\
&\quad- \int_{\sizebound}^{\infty} K(x, y) f(t, x) g(t, y)\dd y + \int_{\sizebound - x}^{\infty} F(x, y) g(t, x + y)\dd y\\
&\quad-\frac{1}{2} \int_{0}^{x} F(x - y, y) f(t, x)\dd y + \int_{0}^{\sizebound - x} F(x, y) f(t, x + y)\dd y
\end{split}
\end{equation}
for $x \in (0, \sizebound),$
the \emph{boundary valued coagulation-fragmentation equation}.

A detailed review of available well-posedness and long time asymptotic properties for the solutions of coagulation-fragmentation models in the whole real line can be found in  \cite{banasiak2019analytic}.
To our knowledge, well-posedness results for coagulation-fragmentation equations with boundary data have not been considered in the mathematical literature.

\subsection{Framework}
We are interested in continuous kernels in $(0, \infty)^2$ satisfying
\begin{equation}\label{upper_bound_singular_kernel}
 K(x, y) \leq \upperbound (x^{-\alpha} + y^{-\alpha})(x^{\beta} + y^{\beta})
\end{equation}
\begin{equation}\label{fragmentation_upper_bound}
F(x, y) \leq \upfrag (x^{\gamma} + y^{\gamma})
\end{equation}
for some $\upperbound,$ $\upfrag \geq 0$ and $\alpha, \beta, \gamma \in [0,1].$
This assumption includes most of the singular coagulation kernels in the applications \cite{aldous1999deterministic}.

By scaling we may assume without loss of generality that $\sizebound = 1$.
Multiplying \eqref{bdy_coagulation_fragmentation_equation} by a
test function
$\test \in C^{\infty}_c(0, 1)$
and formally integrating
\begin{equation}\label{bdy_coagulation_fragmentation_ac}
\begin{split}
\int_{0}^{1}\partial_t f(t, x) \test(x) \dd x + \int_{0}^{1} f(t, x) \big(\reacc(t, x)\test(x) + \opb{\test}(x) \big) \dd x =\\
= \dfrac{1}{2}\int_{0}^{1}\int_{0}^{1} K(x, y) f(t, x) f(t, y) \opa{\test}(x, y) \dd x \dd y + \int_{0}^{1} \coreacc(t, x)\test(x) \dd x
\end{split}
\end{equation}
where we have introduced the operators
\begin{equation*}
\opa{\test}(x, y) = \test(x + y) - \test(x) - \test(y)
\end{equation*}
\begin{equation*}
\opb{\test}(x) = \dfrac{1}{2}\int_{0}^{x} F(x - y, y) (\test(x) - \test(x - y) - \test(y)) \dd y,
\end{equation*}
and the functions
\begin{equation*}
\reacc(t, x) = \int_{1}^{\infty} K(x, y) g(t, y) \dd y, \quad \coreacc(t, x) = \int_{1}^{\infty} F(y - x, x) g(t, y) \dd y.
\end{equation*}

It will be convenient to rewrite equation \eqref{bdy_coagulation_fragmentation_ac} for general measure-valued solutions, hence we now fix some relevant notation:

\begin{notation}
Given a topological space $X,$ the usual spaces of continuous functions are denoted by $\compactsuppx$ (compactly supported), $\vanishingx$ (vanishing at infinity i.e. the completion of $\compactsuppx$ in the supremum norm) and $\boundedx$ (bounded).
We also write $\bddborelx$ for the set of bounded Borel functions.

Given a function $\test \colon X \to \real$ we denote its Lipschitz seminorm by
\[\lipnorm{\test} = \sup_{x, y \in X,\, x \neq y} \dfrac{|\test(x) - \test(y)|}{|x-y|}.\]

The space of signed Borel measures with bounded total variation is denoted by $\measurex$ and we write $\measureposx$ if the measures are nonnegative (i.e. with $\mu = \abs{\mu}$).

In this work we will only have $X = (0, 1]$ or $(0, 1]^2$ with the topology induced by $(0, \infty)$ and $(0, \infty)^2,$ respectively.
In particular, $X$ will always be a Polish space and the measures defined thereon will have some useful properties for constructing solutions.

Assume $X$ to be Polish.
Writing $\variation{\mu} = \abs{\mu}(X)$ for the total variation norm, recall that we have a norm preserving isomorphism of Banach spaces
\[(\measurex, \variation{\cdot}) \cong ((\vanishingx)', \|\cdot\|_{\text{op}}), \quad \mu \mapsto (\test \mapsto \textstyle\int \test \dd \mu)\]
where $(\vanishingx)'$ is the dual of $\vanishingx$ and $\|\cdot\|_{\text{op}}$ the operator norm.
Hence $\variation{\mu} = \sup \{ \int \test \dd \mu \suchthat \test \in \vanishingx\}$ and we will always employ the notation for the dual pairing $\integrate{\mu}{\test} = \int \test \dd \mu.$
In particular, we may endow $\measurex$ with the weak-$\star$ topology of $(\vanishingx)'$ (the \emph{weak-$\star$ topology}) or that of $(\boundedx)'$ (the \emph{weak-$C_b$ topology}).

Weak-$\star$ (resp. weak-$C_b$) convergence is denoted by $\weak$ (resp. $\bweak$).
By default the spaces $\measurex$ and $\measureposx$ are assumed to carry the topology generated by $\variation{\cdot}.$
We write $\weakmeasurex$ for the set $\measurex$ with the weak-$\star$ topology.
For a more complete exposition of the subject of spaces of measures we refer the reader to \cite{bogachev2007measure}.

We often write $\reacc_t, \coreacc_t$ for the maps $t \mapsto \reacc(t, \cdot),$ $t \mapsto \coreacc(t, \cdot).$

Finally, for functions $f, g \colon [0, \infty) \to [0, \infty)$ we will say that $f = \mathcal{O}(g)$ as $t \to \infty$ if there are nonnegative constants $t_0, C $ such that $f(t) \leq C g(t)$ for $t \geq t_0.$
\end{notation}

Given an initial value $\initialvalue \in \measurepos$ and a map $\mu \colon [0, \infty) \to \measurepos,$ $t \mapsto \mu_t,$
if we let
$f(t, x) \dd x \to \dd \mu_t(x)$ in \eqref{bdy_coagulation_fragmentation_ac}
we obtain
a weak formulation of the Cauchy problem for the boundary valued coagulation fragmentation equation
\begin{equation}\label{cauchy_pb}
\begin{cases}
\dfrac{\dd}{\dd t} \integrate{\mu_t}{\test} + \integrate{\mu_t}{\reacc_t \test + \opb{\test}} = \dfrac{1}{2}\integrate{\mu_t \otimes \mu_t}{ K \opa{\test}} + \integrate{\lebesgue}{\coreacc_t\test}, \text{ for } t \in (0, \infty)\\
\mu_0 = \initialvalue
\end{cases}
\end{equation}
where the test functions $\test$ are in $C^{\infty}_c(0,1)$ and $\lebesgue$ is the Lebesgue measure restricted to $(0, 1].$

\begin{definition}\label{boundary_datum}
A \emph{boundary datum} is a nonnegative continuous map
\[g \colon [0, \infty) \to L^{1}(1, \infty), \quad t \mapsto g(t, \cdot).\]
For $\lambda \in \real$, the corresponding \emph{moment of order $\lambda$} is the function of time
\[\massbdy_{\lambda}(t) = \int_{1}^{\infty} x^{\lambda} g(t, x) \dd x\]
and the uniform bounds are denoted by
\[\momentbound_{\lambda} = \sup_{t \geq 0} \massbdy_{\lambda}(t).\]
Similarly, we denote the moments of measures $\mu \colon [0, \infty) \to \measurepos$ by
\[\mass_{\lambda}(t) = \int_{(0,1]} x^{\lambda} \dd \mu_t(x).\]
\end{definition}

\begin{remark}
The moments need not be finite for general data.
\end{remark}

\begin{remark}
We will only make assumptions on the moments of the boundary datum, therefore our analysis applies equally well to more general continuous maps
\[ g \colon [0, \infty) \to \measureposbdy, \text{ with } \quad \massbdy_{\lambda}(t) = \int_{1}^{\infty} x^{\lambda} \dd g_t(x).\]
\end{remark}

We introduce the space of regular functions
\begin{equation*}
\lip = \left\{ \test: (0,1]\to \real\suchthat \test \text{ is Lipschitz and vanishes at }0 \right\}
\end{equation*}
and extend the operator $\mathcal{A}$ to functions having values at $1$ by letting
\begin{equation*}
\opa{\test}(x, y) = \overline{\test}(x + y) - \test(x) - \test(y) \quad \text{ for } x, y \in (0,1]
\end{equation*}
where $\overline{\test}$ denotes the extension to $(0, \infty)$ by a constant
\begin{equation*}
\overline{\test}(x) =
\begin{cases}
\test(x), & \text{ if } x \in (0, 1]\\
\test(1), & \text{ if } x \in (1, \infty).
\end{cases}
\end{equation*}
From now on $\mathcal{A}$ will always denote this extension.

Test functions in $\lip$ cancel the singularities of the kernels:

\begin{lemma}\label{uniform_bounds_operator}
If the kernels satisfy \eqref{upper_bound_singular_kernel} and \eqref{fragmentation_upper_bound}, then for $\test\in\lip$ we have
\[\sup_{x, y \in (0,1]} K(x, y) \big|\opa{\test}(x,y)\big| \leq 8 \upperbound \lipnorm{\test}, \quad \sup_{x \in (0, 1]} \abs{\opb{\test}(x)} \leq 3 \upfrag \supnorm{\test},\]
\[\sup_{x \in (0, 1]} \reacc(t, x) \abs{\test(x)} \leq 4 \upperbound \massbdy_{\beta}(t) \lipnorm{\test}, \quad \sup_{x \in (0, 1]} \coreacc(t, x) \leq 2 \upfrag \massbdy_{\gamma}(t).\]
\end{lemma}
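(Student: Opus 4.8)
The plan is to estimate each of the four quantities separately, using the structural bounds \eqref{upper_bound_singular_kernel} and \eqref{fragmentation_upper_bound} on the kernels together with the fact that $\test \in \lip$ is Lipschitz with $\test(0) = 0$, so that $\abs{\test(x)} \le \lipnorm{\test}\, x$ for all $x \in (0,1]$ and likewise $\supnorm{\test} \le \lipnorm{\test}$.

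First I would treat $\opa{\test}$. For $x, y \in (0,1]$, using the definition $\opa{\test}(x,y) = \overline{\test}(x+y) - \test(x) - \test(y)$ and the bound $\abs{\test(x)} \le \lipnorm{\test}\, x$ (valid also for $\overline{\test}$ since $\overline{\test}(x+y) = \test(1)$ when $x + y > 1$, and $\abs{\test(1)} \le \lipnorm{\test} \le \lipnorm{\test}(x+y)$ in that regime, while for $x + y \le 1$ it is immediate), one gets $\abs{\opa{\test}(x,y)} \le \lipnorm{\test}\,\big( (x+y) + x + y \big) \le 4\lipnorm{\test}\,(x+y) \le 8 \lipnorm{\test}\, \max\{x,y\}$. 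Actually the cleaner route is to bound $\abs{\opa{\test}(x,y)} \le 2\lipnorm{\test}\,(x+y)$ using $\abs{\overline{\test}(x+y) - \test(x)} \le \lipnorm{\test}\, y$ and $\abs{\test(y)} \le \lipnorm{\test}\, y$, hence $\abs{\opa{\test}(x,y)} \le 2\lipnorm{\test}\, \min\{x,y\}$ by symmetry. Then, assuming without loss of generality $x \le y$, one multiplies by $K(x,y) \le \upperbound(x^{-\alpha} + y^{-\alpha})(x^\beta + y^\beta) \le 2\upperbound\, x^{-\alpha}\cdot 2 y^\beta = 4\upperbound\, x^{-\alpha} y^\beta$, giving $K(x,y)\abs{\opa{\test}(x,y)} \le 8\upperbound\,\lipnorm{\test}\, x^{1-\alpha} y^\beta \le 8\upperbound\,\lipnorm{\test}$ since $x^{1-\alpha} \le 1$ and $y^\beta \le 1$ as $x, y \in (0,1]$ and $\alpha, \beta \in [0,1]$. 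The key cancellation is that the singular factor $x^{-\alpha}$ is absorbed by the vanishing of $\opa{\test}$ of order $x$ at the diagonal-to-zero limit.

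Next, for $\opb{\test}$: the integrand $\test(x) - \test(x-y) - \test(y)$ has absolute value at most $\lipnorm{\test}\, y + \abs{\test(y)} \le 2\lipnorm{\test}\, y$, and also trivially at most $3\supnorm{\test}$. Using $F(x-y, y) \le \upfrag((x-y)^\gamma + y^\gamma) \le 2\upfrag$ on $(0,1]$ and the crude bound $\abs{\test(x) - \test(x-y) - \test(y)} \le 3\supnorm{\test}$, integrating over $y \in (0,x) \subset (0,1)$ yields $\abs{\opb{\test}(x)} \le \tfrac12 \cdot 2\upfrag \cdot 3\supnorm{\test} \cdot x \le 3\upfrag\supnorm{\test}$. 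For $\reacc(t,x)\abs{\test(x)}$: we have $\abs{\test(x)} \le \lipnorm{\test}\, x$ and $\reacc(t,x) = \int_1^\infty K(x,y) g(t,y)\dd y$ with $K(x,y) \le \upperbound(x^{-\alpha}+y^{-\alpha})(x^\beta+y^\beta)$; since $y \ge 1$ and $x \le 1$ we bound $x^{-\alpha} + y^{-\alpha} \le 2 x^{-\alpha}$ and $x^\beta + y^\beta \le 2 y^\beta$, so $K(x,y) \le 4\upperbound\, x^{-\alpha} y^\beta$ and thus $\reacc(t,x)\abs{\test(x)} \le 4\upperbound\,\lipnorm{\test}\, x^{1-\alpha}\int_1^\infty y^\beta g(t,y)\dd y \le 4\upperbound\,\lipnorm{\test}\,\massbdy_\beta(t)$, using $x^{1-\alpha} \le 1$. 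Finally, $\coreacc(t,x) = \int_1^\infty F(y-x,x) g(t,y)\dd y$ with $F(y-x,x) \le \upfrag((y-x)^\gamma + x^\gamma) \le \upfrag(y^\gamma + 1) \le 2\upfrag\, y^\gamma$ for $y \ge 1$, hence $\coreacc(t,x) \le 2\upfrag\int_1^\infty y^\gamma g(t,y)\dd y = 2\upfrag\,\massbdy_\gamma(t)$.

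The argument is essentially a bookkeeping exercise and I expect no genuine obstacle; the only point requiring a little care is the treatment of $\overline{\test}$ at $x+y > 1$ in the first estimate, where one must check that the constant extension does not spoil the Lipschitz-type bound $\abs{\overline{\test}(x+y) - \test(x)} \le \lipnorm{\test}\,\min\{x,y\}$ — this holds because $\overline{\test}$ is still globally Lipschitz on $(0,\infty)$ with the same constant, being constant beyond $1$ and matching $\test$ continuously at $1$. The numerical constants $8, 3, 4, 2$ in the statement come out of the crude splittings $x^{-\alpha} + y^{-\alpha} \le 2\max^{-\alpha}$ and $x^\beta + y^\beta \le 2\max^\beta$ together with the factor $2$ from $\abs{\opa{\test}} \le 2\lipnorm{\test}\min\{x,y\}$, and are not optimized.
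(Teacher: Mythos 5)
Your proposal is correct and follows essentially the same route as the paper: the Lipschitz bound $\abs{\opa{\test}(x,y)}\leq 2\lipnorm{\test}\min\{x,y\}$ (using that the constant extension preserves the Lipschitz constant) absorbed against the singular factor in \eqref{upper_bound_singular_kernel}, the crude bound $F\leq 2\upfrag$ with $x\leq 1$ for $\opb{\test}$, the estimate $\reacc(t,x)\leq 4\upperbound x^{-\alpha}\massbdy_{\beta}(t)$ combined with $\abs{\test(x)}\leq\lipnorm{\test}x$, and $1+y^{\gamma}\leq 2y^{\gamma}$ for $y\geq 1$ in the $\coreacc$ term. All constants and steps match the paper's proof, so there is nothing to add.
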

\begin{proof}
The extension $\overline{\test}$ is again Lipschitz with $\lipnorm{\overline{\test}} = \lipnorm{\test}.$
By symmetry
\begin{equation*}
\abs{\opa{\test}(x, y)} \leq 2 \lipnorm{\test} \min\{x, y\}
\end{equation*}
and the first inequality follows by expanding in \eqref{upper_bound_singular_kernel}.

The second inequality is clear,
as $F$ is bounded by $2 \upfrag$ and $x\leq 1$.

\eqref{upper_bound_singular_kernel} implies
$\reacc(t, x) \leq 4 \upperbound x^{-\alpha} \massbdy_{\beta}(t)$
and since $\abs{\test(x)} \leq \lipnorm{\test} x$ and $\alpha \leq 1$
the third claim follows.

Finally, since $\gamma \geq 0$ we have $\coreacc(t, x) \leq \upfrag \int_{1}^{\infty} (1 + y^{\gamma}) g(t, y) \dd y \leq 2 \upfrag \massbdy_{\gamma}(t).$
\end{proof}

Lemma \ref{uniform_bounds_operator} now justifies that we formulate Problem \eqref{cauchy_pb} in the following manner.

\begin{definition}\label{definition_of_solution}
Given $\initialvalue \in \measurepos$ and a boundary datum $g,$ we say that a map $\mu \in \solspaceinfpos$ solves the
boundary valued coagulation-fragmentation equation if
\begin{equation}\label{a_solution}
\begin{split}
\integrate{\mu_t}{\test} = \integrate{\initialvalue}{\test} + \int_{0}^{t}
\Big(\dfrac{1}{2}\integrate{\mu_s \otimes \mu_s}{ K \opa{\test}} - \integrate{\mu_s}{\reacc_s \test + \opb{\test}} + \integrate{\lebesgue}{\coreacc_s \test}
\Big)\dd s
\end{split}
\end{equation}
for all $\test \in \lip,$ $t \geq 0.$
\end{definition}

\subsection{Main results}

We have the following existence result for the Cauchy problem when the kernels are singular.
\begin{theorem}\label{existence_result}
Let $K$, $F$ be continuous kernels satisfying \eqref{upper_bound_singular_kernel}, \eqref{fragmentation_upper_bound}, $\initialvalue$ an initial value in $\measurepos$ and $g$ a boundary datum with $\momentbound_{\max{\{\beta, \gamma\}}} < \infty.$
Then there exists a solution $\mu$ of the associated boundary valued coagulation-fragmentation equation.
\end{theorem}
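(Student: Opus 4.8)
The plan is to construct a solution by approximation: first regularize the coagulation kernel so that it becomes bounded, solve the resulting (better-behaved) problem by a fixed-point argument, and then pass to the limit using compactness in the space of measures. Concretely, for $j \in \naturals$ define truncated kernels $\approxkernel(x,y) = \min\{K(x,y), j\}$, which are still continuous, symmetric, nonnegative, and bounded, and which satisfy the same bound \eqref{upper_bound_singular_kernel} with the same constant $\upperbound$ (truncation only decreases the kernel). For each fixed $j$, the coagulation term $\integrate{\mu_s \otimes \mu_s}{\approxkernel \opa{\test}}$ is Lipschitz in $\mu_s$ with respect to the total variation norm on bounded sets, so a standard Picard/Banach fixed-point iteration on $\solspace$ (for short time, then continued) produces a solution $\approxsol \in \solspaceinfpos$ of \eqref{a_solution} with $K$ replaced by $\approxkernel$. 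Here one should check that the iteration preserves nonnegativity — this is where writing the equation in mild/Duhamel form with the loss term $\reacc_t$ treated as a multiplicative damping is convenient, since then the Duhamel formula has manifestly nonnegative ingredients (gain terms, source $\coreacc$, and the initial datum) integrated against a nonnegative kernel.

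The second step is to obtain $j$-uniform bounds. Testing \eqref{a_solution} with $\test \equiv 1$ on $(0,1]$ — or rather with an approximation thereof in $\lip$, then passing to the limit — and using that $\opa{1}(x,y) = \overline{1}(x+y) - 1 - 1 = -1 \le 0$, $\opb{1} = 0$, and $\reacc_s \ge 0$, one finds
\[
\mass_0(t) \le \mass_0(0) + \int_0^t \integrate{\lebesgue}{\coreacc_s}\dd s \le \variation{\initialvalue} + 2\upfrag \int_0^t \massbdy_\gamma(s)\dd s \le \variation{\initialvalue} + 2\upfrag\, \momentbound_\gamma\, t,
\]
using Lemma \ref{uniform_bounds_operator} for the bound on $\coreacc_s$. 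This gives a bound on $\variation{\approxsol_t}$ that is uniform in $j$ and locally uniform in $t$. Combined with Lemma \ref{uniform_bounds_operator} (which bounds every term in \eqref{a_solution} by $\lipnorm{\test}$ or $\supnorm{\test}$ times the total mass and $\momentbound_{\max\{\beta,\gamma\}}$), this yields an equicontinuity estimate: for $\test \in \lip$,
\[
\abs{\integrate{\approxsol_t}{\test} - \integrate{\approxsol_{t'}}{\test}} \le C(\test, T)\, \abs{t - t'}, \qquad 0 \le t, t' \le T,
\]
with $C$ independent of $j$.

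The third step is the limit passage. For a fixed $T$, the family $\{\approxsol_t\}_{j, t \le T}$ is bounded in total variation, hence (since $(0,1]$ is Polish and the unit ball of $\measurepos$ is weak-$\star$ sequentially compact via the identification with $(\vanishing)'$) relatively sequentially compact in the weak-$\star$ topology; together with the uniform-in-$j$ equicontinuity above and a countable dense set of test functions, an Arzel\`a--Ascoli / diagonal argument extracts a subsequence $\approxsolsub$ converging, uniformly on $[0,T]$ and for every $\test \in \vanishing$, to some $\mu \in C\big([0,T]; \weakmeasure\big)$ with values in $\measurepos$. One then checks each term of \eqref{a_solution} passes to the limit: the linear terms $\integrate{\approxsolsub_s}{\reacc_s\test + \opb{\test}}$ converge since $\reacc_s\test + \opb{\test} \in \vanishing$ (using Lemma \ref{uniform_bounds_operator}, it is bounded, and it vanishes at $0$ because $\test$ does and $\reacc_s$ is controlled by $x^{-\alpha}$ with $\alpha \le 1$; a short argument handles the behavior near $x=1$), the source term is constant in $j$, and the quadratic term requires showing $\approxsolsub_s \otimes \approxsolsub_s \weak \mu_s \otimes \mu_s$ on $(0,1]^2$ and that $\approxkernel[j_k]\opa{\test} \to K\opa{\test}$ in a way compatible with this — here the key point is that $K\opa{\test}$ is a \emph{bounded continuous} function on $(0,1]^2$ by Lemma \ref{uniform_bounds_operator} (the singularity is cancelled), and $\approxkernel[j_k]\opa{\test} \to K\opa{\test}$ pointwise with a uniform bound, so dominated convergence against the weakly-$\star$ convergent product measures applies after a standard argument converting weak-$\star$ convergence on each factor into convergence of the product integrals for bounded continuous integrands. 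Finally, a diagonal argument over $T = 1, 2, \dots$ upgrades the local solution to one in $\solspaceinfpos$, and since $\mu_0 = \initialvalue$ and \eqref{a_solution} holds for every $\test \in \lip$ and $t \ge 0$, $\mu$ is the desired solution.

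The main obstacle is the limit passage in the quadratic coagulation term: weak-$\star$ convergence of $\approxsolsub_s$ does not automatically give convergence of $\integrate{\approxsolsub_s \otimes \approxsolsub_s}{\approxkernel[j_k]\opa{\test}}$, both because the product of weak-$\star$ limits needs justification (the function $\opa{\test}$, extended to $(0,1]^2$ with the $\overline{\test}$ convention, must be genuinely in $C_b$ — not $C_0$ — of $(0,1]^2$, so one must be slightly careful about using a bounded rather than vanishing test function, which is exactly why the weak-$C_b$ topology and the continuity of $s \mapsto \mu_s$ in a suitable sense matter) and because the kernel itself is varying with $k$. Both difficulties are resolved by Lemma \ref{uniform_bounds_operator}: it guarantees $K\opa{\test}$ is bounded and (being a product of a continuous kernel and a continuous function) continuous on $(0,1]^2$, so the problematic singular behavior has been removed and only a routine — if careful — weak-convergence argument remains.
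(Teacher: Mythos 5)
Your overall strategy is the same as the paper's (well-posedness for a bounded truncation of $K$ by a fixed-point argument with an integrating-factor/Duhamel device for positivity, $j$-uniform bounds, Arzel\`a--Ascoli plus a diagonal argument, then passage to the limit term by term), but there is a genuine gap in the limit passage: you never establish a $j$-uniform \emph{tightness} estimate preventing concentration of mass near $x=0$, and without it the convergence you invoke is not available. Weak-$\star$ convergence in $\big(\vanishing\big)'$ of a TV-bounded sequence of positive measures does not control integrals of test functions that fail to vanish as $x\to 0^{+}$ (e.g.\ $\delta_{1/j}\weak 0$ while the total mass stays $1$), and precisely such integrands occur here: when $\alpha=1$ (allowed by \eqref{upper_bound_singular_kernel}), $\reacc_s(x)\test(x)$ behaves like a nonzero constant as $x\to0^{+}$, and $K\opa{\test}$ on $(0,1]^2$ is only bounded continuous, not vanishing near the boundary. (Your parenthetical claim that $\reacc_s\test+\opb{\test}\in\vanishing$ is false for $\alpha=1$, and the delicate point is at $x\to0^{+}$, not near $x=1$, since $1$ belongs to $(0,1]$.) Consequently your appeal to ``dominated convergence against the weakly-$\star$ convergent product measures'' is not a valid step: with both the integrand and the measures varying, one needs either weak-$C_b$ convergence (which requires tightness at $0$) or an explicit control of the region where the truncated and true kernels differ. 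The paper supplies exactly this missing ingredient: the second estimate of Proposition \ref{a_priori_estimates_fragmentation}, $\approxsol_t(0,\delta)\leq \initialvalue(0,\delta)+C_T(1+\momentbound_{\gamma})\delta$, obtained by testing with $\chi_{(0,\delta)}$ and estimating $\opb{\chi_{(0,\delta)}}$; this gives weak-$C_b$ convergence for each $t$ via Prokhorov (Proposition \ref{convergences}) and, through Lemma \ref{zero_limits}, absorbs the $j$-dependence of the kernel (the paper's cut-off $\approxkernel=K\chi_{\{x,y>1/j\}}$ is chosen so that the discrepancy lives where $\approxsol_t(0,1/j)\to0$). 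Your cap truncation $\min\{K,j\}$ could be made to work, but only after proving the same kind of small-size estimate.

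Two smaller corrections: $\opb{1}\neq 0$; for $\test_0=\chi_{(0,1]}$ one has $\opb{\test_0}(x)=-\tfrac12\int_0^x F(x-y,y)\dd y$, so the mass inequality picks up the term $\upfrag\int_0^t\variation{\approxsol_s}\dd s$ and the uniform bound follows by Gr\"onwall (as in Proposition \ref{a_priori_estimates_fragmentation}), not by the direct inequality you wrote; it is $\opb{\test_1}$ with $\test_1(x)=x$ that vanishes. These are fixable, but the absent tightness estimate is the step on which the proof genuinely hinges.
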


\begin{remark}
Our methods do not yield the uniqueness of solutions, which is not clear under our mild assumptions on the initial datum as it need not have finite negative moments.
For uniqueness results for coagulation--fragmentation equations with singular kernels see \cite{camejo2015regular}, \cite{saha2015singular}.
\end{remark}

Under the assumption of lower bounds for the singular coagulation kernel
\begin{equation}\label{lower_bound_kernel}
K(x, y) \geq \lowerbound (x^{-\alpha} y^{\beta} + y^{-\alpha} x^{\beta})
\end{equation}
we can improve the regularity of the solutions.
We have
\begin{theorem}\label{improvement_result}
If the coagulation kernel satisfies \eqref{lower_bound_kernel}, then for all $\varepsilon > 0$ solutions of the boundary valued coagulation equation have $\mass_{-(1 + \alpha - \varepsilon)} \in L^{1}_{\text{loc}}[0, \infty),$ as well as $\mass_{-(1 - \varepsilon)}(t) < \infty$ for all $t > 0$ and equation \eqref{a_solution} holds for general test functions $\test$ in $\bddborel$ also.
\end{theorem}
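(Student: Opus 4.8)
\emph{Overview.} The plan is to extract from the coagulation loss term a Riccati‑type differential inequality for the negative moment $\mass_{-s}$, $s=1-\ep$, whose super‑linear damping forces its solutions to forget their (possibly infinite) value at $t=0$ — this is the instantaneous appearance of moments. I would run the computation on the regular approximating solutions $\approxsol$ with bounded regularised kernels $\approxkernel$ from the proof of Theorem~\ref{existence_result} (so that $\mass_{-s}(0)<\infty$ and one may differentiate along the equation; the lower bound \eqref{lower_bound_kernel} for $\approxkernel$ remaining valid at least on $\{x,y\geq 1/j\}$, and the upper bounds \eqref{upper_bound_singular_kernel}–\eqref{fragmentation_upper_bound} and the mass bound $\sup_{[0,T]}\mass_0$ being $j$‑uniform), and pass to the limit at the end via the lower semicontinuity of $\mu\mapsto\mass_{-\lambda}(t)$ along weak‑$\star$ convergence. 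The pointwise estimate that drives everything is the subadditivity bound (using the constant extension in the definition of $\mathcal A$)
\[
\opa{x^{-s}}(x,y)\ =\ \overline{(\cdot)^{-s}}(x+y)-x^{-s}-y^{-s}\ \leq\ -\tfrac12\big(x^{-s}+y^{-s}\big)\qquad (x,y\in(0,1]),
\]
immediate from $(x+y)^{-s}\leq\min(x^{-s},y^{-s})$ when $x+y\leq1$ and from $x^{-s},y^{-s}\geq1$ when $x+y>1$.

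\emph{The Riccati inequality.} Testing the (regular) equation against $x^{-s}$, discarding the non‑negative $\reacc_t$‑contribution and using the above together with $\opb{x^{-s}}\geq-\tfrac{2\upfrag}{1-s}$ and $\coreacc(t,\cdot)\leq2\upfrag\momentbound_\gamma$ (proved as in Lemma~\ref{uniform_bounds_operator}), I expect
\[
\tfrac{\dd}{\dd t}\mass_{-s}(t)\ \leq\ -\tfrac14\!\int\!\!\int(x^{-s}+y^{-s})\,\approxkernel(x,y)\,\dd\mu_t\dd\mu_t+\tfrac{2\upfrag}{1-s}\big(\mass_0(t)+\momentbound_\gamma\big).
\]
By \eqref{lower_bound_kernel} (on $\{x,y\geq1/j\}$) and symmetry the double integral dominates $\tfrac{\lowerbound}{2}\mass_{-s-\alpha}(t)\mass_\beta(t)$ up to an error disappearing as $j\to\infty$, while Jensen's inequality for $u\mapsto u^{1+\alpha/s}$ gives $\mass_{-s-\alpha}(t)\geq\mass_{-s}(t)^{1+\alpha/s}\mass_0(t)^{-\alpha/s}$. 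Hence, on $[0,T]$,
\[
m'(t)\ \leq\ -c\,m(t)^{p}\,w(t)+C_T,\qquad m=\mass_{-s},\ \ p=1+\tfrac{\alpha}{s}>1,\ \ w=\mass_0^{-\alpha/s}\mass_\beta\geq0,
\]
with $c=\tfrac{\lowerbound}{2}$ and $C_T$ independent of $j$. (The degenerate case $\alpha=0$, in which the two assertions coincide, and the trivial case $\initialvalue=0$ I would treat separately.)

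\emph{Instantaneous smoothing, and the main obstacle.} The elementary comparison principle for $y'\leq-cy^p+C$, $p>1$, gives $y(t)\leq\max\{(2C/c)^{1/p},(\tfrac{c}{2}(p-1)(t-a))^{-1/(p-1)}\}$ for $t>a$, \emph{regardless of $y(a)$}. When $\initialvalue\neq0$, continuity of $t\mapsto\mass_\beta(t)$ makes $w\geq w_0>0$ on some $[0,\tau]$; applying the comparison principle there and then letting $j\to\infty$ by lower semicontinuity bounds $\mass_{-s}(t)$ on $(0,\tau]$ by a quantity depending only on $\lowerbound,\upfrag,\momentbound_\gamma,\sup_{[0,T]}\mass_0,\tau,w_0$. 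For $t>\tau$ one only needs finiteness, which holds since $m'\leq C_T$ gives at most linear growth, and on the closed set where $w$ vanishes so does $\mu_t$, after which the source $\coreacc$ re‑injects only a bounded‑density part. This yields $\mass_{-(1-\ep)}(t)<\infty$ for all $t>0$. The genuine difficulty is precisely this control of the degenerate weight $w$ — i.e.\ excluding a finite‑time blow‑up of the negative moment while the total mass is small — which is why the exact signs of the fragmentation and source terms must be used, not merely their absolute values.

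\emph{Higher moment and Borel test functions.} Integrating the Riccati inequality on $[\delta,T]$ and using $\mass_{-(1-\ep)}(\delta)<\infty$ from the previous step yields, $j$‑uniformly,
\[
\tfrac{\lowerbound}{2}\int_\delta^T\mass_{-s-\alpha}(t)\,\mass_\beta(t)\,\dd t\ \leq\ \mass_{-(1-\ep)}(\delta)+C_T(T-\delta);
\]
passing to the limit with Fatou's lemma (and $\mass^{j}_\beta(t)\to\mass_\beta(t)$ for $\beta>0$, lower semicontinuity of $\mass^{j}_{-s-\alpha}(t)$) gives $\int_\delta^T\mass_{-(1+\alpha-\ep)}(t)\mass_\beta(t)\dd t<\infty$ for all $0<\delta<T$, whence $\mass_{-(1+\alpha-\ep)}\in L^1_{\mathrm{loc}}$, since on each compact subinterval of $(0,\infty)$ either $\mass_\beta$ is bounded below or $\mu$ — hence $\mass_{-(1+\alpha-\ep)}$ — vanishes identically. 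For the last assertion, note that for $\test\in\bddborel$ the bounds $K(x,y)|\opa{\test}(x,y)|\leq 6\upperbound\supnorm{\test}(x^{-\alpha}+y^{-\alpha})$ and $\reacc(t,x)|\test(x)|\leq 4\upperbound\massbdy_\beta(t)\supnorm{\test}\,x^{-\alpha}$ follow from \eqref{upper_bound_singular_kernel}, and are integrable against $\mu_s\otimes\mu_s$, respectively $\mu_s$, for $s>0$ thanks to the finiteness and local integrability of $\mass_{-\alpha}$ just obtained, while $\opb{\test}$ and $\coreacc_s$ are bounded by Lemma~\ref{uniform_bounds_operator}; therefore the set of $\test\in\bddborel$ satisfying \eqref{a_solution} is a vector space, closed under bounded monotone limits (by dominated convergence with these dominating functions), and it contains the multiplicative class $\lip$, which generates the Borel $\sigma$‑algebra. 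The functional monotone class theorem then extends \eqref{a_solution} to every $\test\in\bddborel$.
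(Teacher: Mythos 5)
Your route is genuinely different from the paper's, and it is precisely the term you discard at the outset that the paper uses: the proof in the text never exploits a Riccati structure of the quadratic term, but keeps the linear damping $\integrate{\mu_s}{\reacc_s\test}$ with $\reacc(t,x)\ge \lowerbound\,\massbdy_\beta(t)\,x^{-\alpha}$, tests with the truncated Lipschitz functions $\test_{a,\delta}\in\lip$, lets $\delta\to0$ by monotone convergence, and bootstraps in increments of $\alpha$ starting from times $t_0$ where the previous moment is finite. Your scheme has a genuine gap at its technical core: running the estimate on the approximations $\approxsol$ does \emph{not} give $\mass_{-s}(0)<\infty$, since the initial datum $\initialvalue$ is not regularized; worse, because $\approxkernel$ and $\approxreacc$ vanish for $x\le 1/j$, the approximate dynamics contains no sink acting on $(0,1/j]$ (fragmentation only pushes mass downwards), so $\mass^{j}_{-s}(t)=+\infty$ for all $t$ whenever $\int x^{-s}\dd\initialvalue=\infty$. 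Hence the claimed instantaneous smoothing is false at the level of $\approxsol$, and lower semicontinuity in the limit yields nothing. To salvage the Riccati argument you would have to work on the singular problem itself with truncated test functions, but then your key pointwise bound fails: for $\test_{s,\delta}$ one only has $\opa{\test_{s,\delta}}\le 0$, not $\opa{\test_{s,\delta}}\le-\tfrac12\big(\test_{s,\delta}(x)+\test_{s,\delta}(y)\big)$ (take $x=y=\delta/2$, where $\opa{\test_{s,\delta}}=0$), so the superlinear damping is not available without a substantially more careful derivation.

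The second gap is the solution-dependent weight $w=\mass_0^{-\alpha/s}\mass_\beta$. Unlike the boundary moment $\massbdy_\beta$, which is a prescribed datum, $\mass_\beta(t)$ can degenerate (in the pure coagulation case it decays exponentially by Theorem \ref{moment_asymptotics_result}), and your way out is not correct: the dichotomy ``on a compact interval either $\mass_\beta$ is bounded below or $\mu$ vanishes identically'' fails, since $\mass_\beta(t_\ast)=0$ at a single time forces only $\mu_{t_\ast}=0$, while the source $\coreacc$ re-injects mass afterwards and $\mu$ is unconstrained before $t_\ast$. Consequently $\int_\delta^T\mass_{-(1+\alpha-\varepsilon)}\mass_\beta\,\dd t<\infty$ does not deliver $\mass_{-(1+\alpha-\varepsilon)}\in L^1_{\mathrm{loc}}$, and the uniform-in-$j$ positive lower bound on $\mass^{j}_\beta$ near $t=0$ (needed for the comparison principle on $[0,\tau]$) is asserted rather than proved; note also that the case $\initialvalue=0$ is not trivial, since $\coreacc$ immediately creates mass. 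The paper's mechanism avoids all of this because its damping weight is $\massbdy_\beta$. Your final monotone-class extension to $\test\in\bddborel$ is fine in spirit (the paper instead shows the integrand measures are bounded, writes \eqref{measure_equality} as a Bochner integral and extends), but it rests on the moment bounds whose proof is exactly what is missing above.
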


In the absence of fragmentation, the \emph{boundary valued coagulation equation} reads
\begin{equation}\label{coagulation_alone}
\dfrac{\dd}{\dd t} \integrate{\mu_t}{\test} + \integrate{\mu_t}{\reacc_t \test} = \dfrac{1}{2}\integrate{\mu_t \otimes \mu_t}{ K \opa{\test}}, \quad \text{ for } t \in (0, \infty).
\end{equation}
The large time asymptotics of the solutions in this case
are determined by the following
\begin{theorem}\label{moment_asymptotics_result}
If the coagulation kernel satisfies \eqref{lower_bound_kernel}, then for any $\lambda \in \real$ solutions of the boundary valued coagulation equation (i.e. with $F = 0$)
have $\mass_{\lambda}(t) < \infty$ for $t > 0$ and
\begin{equation}
\mass_{\lambda}(t) = \mathcal{O}\bigg(\exp{\Big(-\lowerbound \int_{1}^{t} \massbdy_{\beta}(s) \dd s \Big)}\bigg) \quad \text{ as } t \to \infty.
\end{equation}
In particular,
\begin{itemize}
\item if $\massbdy_{\beta}(s) \geq \lowerboundbdy$ then $\variation{\mu_t} = \mathcal{O}(e^{-a t}) \quad \text{ as } t \to \infty$
\item if $\massbdy_{\beta}(s) \geq \lowerboundbdy t^{-1}$ then $\variation{\mu_t} = \mathcal{O}(t^{-a}) \quad \text{ as } t \to \infty$
\end{itemize}
where $a = \lowerbound \lowerboundbdy.$
\end{theorem}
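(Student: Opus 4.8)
The plan is to run a Grönwall estimate on the moments $\mass_\lambda(t)$, the exponential damping coming from the reaction term $\reacc_t$, but this requires knowing beforehand that $\mass_\lambda(t)$ is finite for $t>0$, which is the real content. Throughout I would test \eqref{a_solution} — legitimate for $\test\in\bddborel$ by Theorem~\ref{improvement_result}, applied with $\upfrag=0$ — against the truncated powers $\test_n(x)=\min\{x^{\lambda},n\}$ for $\lambda<0$. Three elementary facts do all the work: (a) since $x\mapsto x^{\lambda}$ is decreasing, $\opa{\test_n}(x,y)=\overline{\test_n}(x+y)-\test_n(x)-\test_n(y)\le-\max\{\test_n(x),\test_n(y)\}\le0$, so the coagulation term is nonpositive; (b) by \eqref{lower_bound_kernel} and $x\le1$, $\reacc(t,x)\ge\lowerbound x^{-\alpha}\massbdy_{\beta}(t)\ge\lowerbound\massbdy_{\beta}(t)$, and using $x^{-\alpha}\test_n\ge\min\{x^{\lambda-\alpha},n\}$ one gets $\integrate{\mu_t}{\reacc_t\test_n}\ge\lowerbound\massbdy_{\beta}(t)\integrate{\mu_t}{\min\{x^{\lambda-\alpha},n\}}$; (c) bounding $K(x,y)\ge\lowerbound x^{-\alpha}y^{\beta}$ and symmetrising, $\tfrac12\integrate{\mu_t\otimes\mu_t}{K\opa{\test_n}}\le-\tfrac{\lowerbound}{2}\,\mass_{\beta}(t)\integrate{\mu_t}{\min\{x^{\lambda-\alpha},n\}}$.

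I would obtain finiteness of $\mass_\lambda(t)$ ($t>0$, all $\lambda$) by a bootstrap in the exponent, with base case $\mass_{-(1-\ep)}(t)<\infty$ from Theorem~\ref{improvement_result}. For the inductive step, assume $\mass_{\lambda}(\tau)<\infty$ for all $\tau>0$; testing \eqref{a_solution} with $\test_n$ on $[\tau,t]$, dropping $\integrate{\mu_t}{\test_n}\ge0$ and using (b)–(c) gives
\[
\int_{\tau}^{t}\Big(\tfrac12\mass_{\beta}(r)+\massbdy_{\beta}(r)\Big)\integrate{\mu_r}{\min\{x^{\lambda-\alpha},n\}}\,\dd r\ \le\ \tfrac{1}{\lowerbound}\integrate{\mu_\tau}{\test_n}\ \le\ \tfrac{1}{\lowerbound}\mass_{\lambda}(\tau)<\infty,
\]
and letting $n\to\infty$ (monotone convergence) yields $\int_{\tau}^{t}\big(\tfrac12\mass_{\beta}(r)+\massbdy_{\beta}(r)\big)\mass_{\lambda-\alpha}(r)\,\dd r<\infty$. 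Since $\mass_{\beta}(r)>0$ whenever $\mu_r\neq0$, this forces $\mass_{\lambda-\alpha}(r)<\infty$ for a.e. $r>0$; and testing \eqref{a_solution} with $\min\{x^{\lambda-\alpha},n\}$, whose right-hand side is then nonpositive, shows that once this moment is finite it stays finite, so $\mass_{\lambda-\alpha}(t)<\infty$ for all $t>0$. Each step lowers the exponent by $\alpha>0$; combined with $\mass_{\lambda}\le\mass_{\lambda'}$ for $\lambda\ge\lambda'$ (as $x\le1$) and $\mass_\lambda\le\mass_0<\infty$ for $\lambda\ge0$, this gives $\mass_\lambda(t)<\infty$ for all $\lambda\in\real$, $t>0$.

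The decay estimate is then immediate. Fixing $\tau>0$, testing \eqref{a_solution} with $\test_n$ on $[\tau,t]$ and retaining only the nonpositive coagulation term and $\integrate{\mu_r}{\reacc_r\test_n}\ge\lowerbound\massbdy_{\beta}(r)\integrate{\mu_r}{\test_n}$ from (b), the absolutely continuous map $r\mapsto\integrate{\mu_r}{\test_n}$ satisfies $\tfrac{\dd}{\dd r}\integrate{\mu_r}{\test_n}\le-\lowerbound\massbdy_{\beta}(r)\integrate{\mu_r}{\test_n}$ a.e., hence $\integrate{\mu_t}{\test_n}\le\integrate{\mu_\tau}{\test_n}\exp\!\big(-\lowerbound\int_{\tau}^{t}\massbdy_{\beta}(s)\,\dd s\big)$. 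Letting $n\to\infty$ and taking $\tau=1$ gives $\mass_{\lambda}(t)\le\mass_{\lambda}(1)\exp\!\big(-\lowerbound\int_{1}^{t}\massbdy_{\beta}(s)\,\dd s\big)$ for $t\ge1$, i.e. the claimed $\mathcal{O}$-bound; for $\variation{\mu_t}=\mass_0(t)$ one can use $\test\equiv1$ directly and start from $\tau=0$, since $\mass_0(0)=\variation{\initialvalue}<\infty$. The two bullet points follow by computing $\int_{1}^{t}\massbdy_{\beta}$: under $\massbdy_{\beta}\ge\lowerboundbdy$ one gets a factor $e^{-\lowerbound\lowerboundbdy(t-1)}$, hence $\mathcal{O}(e^{-at})$, and under $\massbdy_{\beta}(s)\ge\lowerboundbdy/s$ one gets $t^{-\lowerbound\lowerboundbdy}=t^{-a}$, with $a=\lowerbound\lowerboundbdy$.

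The main obstacle is the finiteness step. The Grönwall estimate cannot be started at $t=0$ because $\initialvalue$ may have infinite negative moments, and starting at a small $\tau>0$ looks circular because $\mass_\lambda(\mu_\tau)$ is exactly what one wants to bound. The way out, as above, is not to discard the coagulation and reaction sinks but to use them to control the time-integral of the strictly lower moment $\mass_{\lambda-\alpha}$, extract its a.e. finiteness, and upgrade this to pointwise finiteness via the monotonicity in time of the truncated moments — after which the bootstrap and the Grönwall step are routine, as is the verification that $\opa{\test_n}\le0$ and that the monotone-convergence limits are justified.
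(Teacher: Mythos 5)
Your proof is correct and follows essentially the same route as the paper: test against truncations of $x^{-a}$ (the paper uses the Lipschitz caps $\test_{a,\delta}$ and lets $\delta\to 0$, you use the flat caps $\min\{x^{\lambda},n\}$ justified by the $\bddborel$ upgrade in Theorem \ref{improvement_result}), bootstrap finiteness of the negative moments in steps of $\alpha$ starting from Theorem \ref{improvement_result}, and then conclude by Gr\"onwall using $\reacc_t(x)\geq \lowerbound \massbdy_{\beta}(t)x^{-\alpha}$. The only minor variation is that you also retain the coagulation sink (the $\mass_{\beta}$ factor) in the a.e.-finiteness step, whereas the paper relies on the $\massbdy_{\beta}$ factor alone.
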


Since $\initialvalue$ need not have finite negative moments,
their finiteness for positive times is a regularizing effect of the equation.

If coagulation and fragmentation are both present,
it is not clear \emph{a priori} whether solutions should converge to an equilibrium.
When the initial datum has a density with respect to the Lebesgue measure (written $\initialvalue \ll \lebesgue$ from now on)
we will show that the solution has the same property after restricting $\mu_t$ to $(0, 1)$ (written $\mu_t \mres (0,1)$) and we recover the strong formulation in equation \eqref{bdy_coagulation_fragmentation_equation}.
We study the large time asymptotics in this case.

Assuming a time independent boundary datum $g$, a stationary solution $f_{\infty}$ of \eqref{bdy_coagulation_fragmentation_equation} should solve
\begin{equation}\label{stationary_with_h}
\begin{split}
0 = & \frac{1}{2}\int_{0}^{x} \Big( K(x - y, y) f_{\infty}(x - y)f_{\infty}( y) - F(x - y, y) f_{\infty}(x) \Big)\dd y \\
&- \int_{0}^{1 - x}\Big( K(x, y) f_{\infty}(x) f_{\infty}(y) - F(x , y) f_{\infty}(x + y)\Big) \dd y\\
&- \int_{1 - x}^{1} \Big( K(x, y) f_{\infty}(x) f_{\infty}(y) - F(x , y) g(x + y)\Big) \dd y\\
&- \int_{1}^{\infty} \Big( K(x, y) f_{\infty}(x) g(y) - F(x , y) g(x + y)\Big) \dd y
\end{split}
\end{equation}
which is certainly satisfied if the function
\begin{equation}
Q(x) =
\begin{cases}
f_{\infty}(x), \quad \text{ if } x \in (0, 1)\\
g(x), \quad \text{ if } x \in (1, \infty).
\end{cases}
\end{equation}
is such that the \emph{detailed balance} condition
\begin{equation}\label{detailed_balance}
K(x, y) Q(x) Q(y) = F(x, y) Q(x + y)
\end{equation}
holds for $x, y \in (0, \infty).$

In particular,
if we have $K(x, y)g(y) > 0$ for $x \in (0, 1), y \in (1,\infty),$
then the profile $f_{\infty}$ is already determined by the boundary datum via
\begin{equation}\label{definition_f}
f_{\infty}(x) = f_{\infty}(g; x) = \dfrac{F(x, y)}{K(x, y)}\dfrac{g(x + y)}{g(y)}.
\end{equation}

\begin{definition}\label{bdy_detailed_balance}
We say that a boundary datum $g$ satisfies the \emph{detailed balance condition} if $f_{\infty}(g; \cdot)$ above is well defined and the corresponding function $Q = Q(g)$ satisfies \eqref{detailed_balance}.
\end{definition}

Under these special conditions, the analysis of Lauren{\c{c}}ot and Mischler \cite{laurenccot2002continuous}
(see also Ca\~{n}izo \cite{canizo2007convergence} for the discrete case)
shows that every solution must converge to a unique equilibrium.
More concretely, we have
\begin{theorem}\label{asymptotics_detailed_balance_result}
Let $K$ be a \emph{bounded coagulation kernel} (as defined below in Section \ref{existence}), $F$ satisfying $\eqref{fragmentation_upper_bound},$ $\initialvalueac \in L^1(0, 1)$ an initial condition such that $\initialvalueac \log{\initialvalueac} \in L^1(0, 1)$ and $g$ a time independent boundary datum with $\momentbound_{\max{\{\beta, \gamma\}}} < \infty$
satisfying the detailed balance condition.
If $f$ is a solution of the boundary valued coagulation fragmentation equation, then
\begin{equation*}
f(t, \cdot) \to f_{\infty}(g; \cdot) \text{ in the weak } L^1(0, 1) \text{ topology as } t \to \infty.
\end{equation*}
\end{theorem}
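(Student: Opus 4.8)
The plan is to run the free-energy / entropy-dissipation method of Lauren\c{c}ot and Mischler \cite{laurenccot2002continuous}, arranged so that the boundary terms $\reacc_t$ and $\coreacc_t$ feed into the dissipation. As a first step I would record that, since $\initialvalueac\in L^1(0,1)$ has finite Boltzmann entropy and $K$ is bounded, the solution keeps a density $f(t,\cdot)\in L^1(0,1)$ for every $t\ge 0$, satisfies the strong form \eqref{bdy_coagulation_fragmentation_equation}, and that \eqref{a_solution} holds for all bounded Borel test functions (Theorem \ref{improvement_result}). Writing $Q = Q(g)$ for the detailed-balance profile and $f_\infty = f_\infty(g;\cdot)$ for its restriction to $(0,1)$, introduce the relative free energy
\[
\entropy[f] = \int_0^1 \Big( f\log\tfrac{f}{f_\infty} - f + f_\infty \Big)\dd x \ \ge\ 0 .
\]
One first checks $\entropy[f(0)] < \infty$: the hypothesis $\initialvalueac\log\initialvalueac\in L^1$ controls $\int_0^1\initialvalueac\log(1/x)\dd x$ by Young's inequality, while $\log f_\infty$ is bounded on $(0,1)$ because a bounded coagulation kernel stays bounded below near the origin, so $f_\infty = F(x,y)g(x+y)/(K(x,y)g(y))$ is bounded and bounded away from $0$.

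Next, the H-theorem step. Test \eqref{a_solution} with $\test_M$, the truncation of $\log\big(f(t,\cdot)/f_\infty\big)$ to $[-M,M]$ extended by $0$ at $x=1$ (so that $\overline{\test_M}$ vanishes on $(1,\infty)$), and use the detailed balance relation $K(x,y)Q(x)Q(y) = F(x,y)Q(x+y)$, $x,y>0$, to re-express $\opa{\test_M}$, $\opb{\test_M}$, $\reacc_t\test_M$ and $\coreacc_t\test_M$. The right-hand side then becomes minus a sum of Boltzmann-type entropy-dissipation integrals: over $\{x,y\in(0,1):x+y<1\}$ for the interior coagulation--fragmentation, and over $\{x\in(0,1),\,y>1-x\}$ for the contributions of $\reacc_t$ and $\coreacc_t$, each of the shape $\iint (a-b)\log(a/b)$ with $a = K(x,y)f(x)f(y)$ or $K(x,y)f(x)g(y)$, and $b = F(x,y)f(x+y)$ or $F(x,y)g(x+y)$ — hence $\ge 0$. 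Sending $M\to\infty$ (dominated/monotone convergence, using that $f\log f$ is locally integrable and the bounds of Lemma \ref{uniform_bounds_operator}) gives $\entropy[f(t)] + \int_0^t\mathcal{D}[f(s,\cdot)]\dd s \le \entropy[f(0)]$ with $\mathcal{D}\ge 0$. Hence $\entropy[f(t)]$ is non-increasing, $\entropy[f(t)]\downarrow\entropy_\infty\ge 0$, $\int_0^\infty\mathcal{D}[f(s,\cdot)]\dd s<\infty$, and the de la Vall\'ee--Poussin criterion shows $\{f(t)\}_{t\ge 0}$ is uniformly integrable, hence relatively weakly compact in $L^1(0,1)$.

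Then the $\omega$-limit step. Given any $t_n\to\infty$, set $f^n(s,\cdot) = f(t_n+s,\cdot)$ on $[0,1]$. These are uniformly bounded and uniformly integrable in $L^1(0,1)$ in $(n,s)$, and $s\mapsto\integrate{\mu_{t_n+s}}{\test}$ is equicontinuous for $\test$ in the countable family $\smoothcompact\subset\lip$ (bound the integrand of \eqref{a_solution} by Lemma \ref{uniform_bounds_operator}); so an Arzel\`a--Ascoli / diagonal argument extracts a subsequence with $f^n\to f^\infty$ in $C\big([0,1]; L^1(0,1)\text{-weak}\big)$. A Fubini-plus-uniform-integrability argument upgrades this to weak $L^1$ convergence, uniformly in $s$, of the products $f^n(s,\cdot)\otimes f^n(s,\cdot)$ and of the fragmentation densities $(x,y)\mapsto f^n(s,x+y)$; since $(a,b)\mapsto (a-b)\log(a/b)$ is jointly convex and nonnegative, the functional $h\mapsto\int_0^1\mathcal{D}[h(s,\cdot)]\dd s$ is weakly lower semicontinuous, while $\int_0^1\mathcal{D}[f^n(s,\cdot)]\dd s \le \entropy[f(t_n)] - \entropy[f(t_n+1)]\to 0$. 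Therefore $\int_0^1\mathcal{D}[f^\infty(s,\cdot)]\dd s = 0$, so every dissipation integrand vanishes for a.e.\ $s$; in particular $K(x,y)f^\infty(s,x)g(y) = F(x,y)g(x+y)$ for a.e.\ $(x,y)\in(0,1)\times(1,\infty)$, which, dividing by $K(x,y)g(y)>0$, forces $f^\infty(s,\cdot) = f_\infty(g;\cdot)$ for a.e.\ $s$. By weak-$L^1$ continuity in $s$, $f^\infty(0,\cdot) = f_\infty(g;\cdot)$, i.e.\ $f(t_n,\cdot)\rightharpoonup f_\infty(g;\cdot)$; since the limit does not depend on the chosen sequence, $f(t,\cdot)\rightharpoonup f_\infty(g;\cdot)$ in weak $L^1(0,1)$ as $t\to\infty$.

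I expect the main obstacle to be the rigorous H-theorem rather than the abstract convergence machinery: $\log(f/f_\infty)$ is neither bounded nor does it vanish at $x=1$, so the computation must be carried out with the truncations $\test_M$, the error terms controlled using only $\initialvalueac\log\initialvalueac\in L^1$ together with Lemma \ref{uniform_bounds_operator}, and one must keep careful track of the interaction across the interface $\{x+y=1\}$, where $\mathcal{A}$ caps $\overline{\test}(x+y)$ at the value $\test(1)$. A secondary point is that the coagulation term is quadratic, so weak $L^1$ convergence of $f^n$ does not per se pass to $f^n\otimes f^n$; this is exactly what the uniform integrability coming from the entropy bound repairs, through the Fubini argument above. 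With these two points dealt with, the remainder is the Lauren\c{c}ot--Mischler scheme, the estimates of Lemma \ref{uniform_bounds_operator} supplying the bounds on the boundary contributions.
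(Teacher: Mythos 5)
Your proposal is, in outline, the same Lauren\c{c}ot--Mischler entropy scheme the paper follows: pass to the strong/density formulation, establish the H-theorem with the three dissipation terms produced by the detailed balance (your decomposition over $\{x+y<1\}$ and $\{y>1-x\}$ matches Proposition \ref{entropy_apriori_estimates} exactly), get weak $L^1$ precompactness of the time translates via uniform integrability, Dunford--Pettis and Arzel\`a--Ascoli, and identify the $\omega$-limit using the boundary dissipation term together with $\reacc>0$. The genuine difference is the last step. The paper never passes the dissipation to the limit trajectory: it keeps only the third term, notes that its integral over $[t_n,t_n+T]$ tends to zero, and converts this directly into $\limsup_n\int_0^T\!\!\int_0^1 \reacc(x)\,|f_n(t,x)-Q(x)|\,\dd x\,\dd t\le C\varepsilon$ via the elementary inequality $x\le Ny+(\log N)^{-1}(x-y)(\log x-\log y)$ with $N=1+\varepsilon$; this identifies the weak limit with no lower semicontinuity argument and, in particular, with no need to discuss convergence of $f^n\otimes f^n$ or of $(x,y)\mapsto f^n(s,x+y)$. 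Your route through weak lower semicontinuity of the convex dissipation is also viable, but observe that the product-convergence machinery you set up is only required for the two dissipation terms you never actually use; restricting from the outset to $\int\reacc\,\energy{f^n,Q}$, which is linear in the unknown, makes the l.s.c. step immediate and is precisely the shortcut the paper takes (and remarks on).

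Two smaller corrections. The density and strong form come from Proposition \ref{ac_sols_fragmentation}, Corollary \ref{the_corollary} and Proposition \ref{wp_bdd_frag} (bounded Borel test functions), not from Theorem \ref{improvement_result}, whose hypothesis \eqref{lower_bound_kernel} is not assumed here. And Definition \ref{kernel} gives no lower bound on $K$, so your justification that $\log f_\infty$ is bounded because a bounded coagulation kernel ``stays bounded below near the origin'' is unsupported; the finiteness of $\entropy(0)$ and the bounds of Lemmas \ref{useful_lemma} and \ref{a_priori_asymptotics} rest on the same implicit assumptions on $Q$ that the paper inherits from \cite{laurenccot2002continuous}, to which it also defers the rigorous justification of the formally derived H-theorem, exactly the point you flag as the main obstacle.
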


The paper is organized as follows.
Section \ref{existence} is devoted to the proofs of theorems \ref{existence_result} and \ref{improvement_result}.
The former
is obtained by a fixed point argument for bounded kernels together with suitable \emph{a priori} estimates,
while the latter
results from a careful study of the moments of measure solutions.
In section \ref{large_time_behavior}
we prove theorem \ref{moment_asymptotics_result}
by applying a similar analysis to the moments when the fragmentation term is absent.
We also show theorem \ref{asymptotics_detailed_balance_result} by finding bounds for an entropy function and extracting a subsequence converging to a stationary solution.

\section{Existence of solutions}\label{existence}

We will prove Theorem \eqref{existence_result} by approximating the singular coagulation kernels via kernels which are bounded in a vicinity of zero and extracting a convergent subsequence from the corresponding solutions.
\subsection{Well-posedness for bounded kernels}
\begin{definition}\label{kernel}
A nonnegative symmetric Borel function $K\colon (0, \infty)\times (0, \infty) \to \real$ is a \emph{bounded coagulation kernel} if it satisfies the following conditions
\begin{enumerate}
\item $\displaystyle\sup_{x, y \in (0,1]} K(x, y) < \infty$
\item there is a $\powerlaw > 0$ such that $\sup_{x \in (0,1]} K(x, y) \leq \powerlaw \, y^{\beta}$ for all $y \in (1, \infty).$
\end{enumerate}
\end{definition}

\begin{remark}
Despite our terminology, the \emph{bounded} kernels above are only bounded in the square region $(0,1]^2$ but may be unbounded outside.
The second condition ensures that the growth of the kernel as the volumes become large is reasonably controlled.
\end{remark}

For bounded kernels a fixed point argument shows that we have existence and uniqueness of solutions for the Cauchy problem.
Our proof is based on that of Norris \cite{norris1999smoluchowski}.
\begin{proposition}\label{wp_bdd_frag}
Let $K$ be a bounded coagulation kernel, $F$ satisfying \eqref{fragmentation_upper_bound}, $\initialvalue$ in $\measurepos$ and $g$ a boundary datum with $M_{\max{\{\beta, \gamma\}}} < \infty.$
Then there is a unique $\mu$ in $\solspaceinfpos$ such that
\[\begin{split}
\integrate{\mu_t}{\test} = \integrate{\initialvalue}{\test} + \int_{0}^{t}
\Big(\dfrac{1}{2}\integrate{\mu_s \otimes \mu_s}{ K \opa{\test}} - \integrate{\mu_s}{\reacc_s \test + \opb{\test}} + \integrate{\lebesgue}{\coreacc_s \test}
\Big)\dd s
\end{split}\]
for all $\test$ in $\bddborel.$
\end{proposition}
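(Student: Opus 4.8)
The plan is to recast the weak identity of the statement as an equivalent Duhamel (mild) integral equation in which the positivity of the measure is manifest, and then to apply Banach's fixed point theorem on short time intervals, passing to $[0,\infty)$ by means of a linear-in-time bound on the total mass. As a first step I would split the right-hand side of the equation into a loss (``death'') part and a gain (``birth'') part. Using the symmetry of $K$ and the substitution $y\mapsto x-y$ in $\mathcal{B}$, the equation becomes
\[
\tfrac{\dd}{\dd t}\integrate{\mu_t}{\test} = \integrate{\Gamma_t[\mu_t]}{\test} - \integrate{\mu_t}{a_t[\mu_t]\,\test},
\]
where $a_t[\mu](x)=\int_{(0,1]}K(x,y)\dd\mu(y)+\reacc(t,x)+\tfrac12\int_0^x F(x-y,y)\dd y$ is a nonnegative Borel function, bounded on $[0,T]\times(0,1]$ by $\kappa\,\variation{\mu}+\powerlaw\momentbound_{\beta}+\upfrag$ (with $\kappa:=\sup_{x,y\in(0,1]}K(x,y)<\infty$ by Definition \ref{kernel}, $\reacc(t,x)\le\powerlaw\momentbound_{\beta}$ again by Definition \ref{kernel}, and $\int_0^xF\dd y\le 2\upfrag$ by \eqref{fragmentation_upper_bound}), and $\Gamma_t[\mu]=\Gamma^{\mathrm c}_t[\mu]+\Gamma^{\mathrm f}_t[\mu]+\coreacc_t\lebesgue$ is a finite \emph{nonnegative} measure on $(0,1]$: $\Gamma^{\mathrm c}_t[\mu]$ is one half the push-forward of the (finite) measure $K(x,y)\dd\mu(x)\dd\mu(y)$ under $(x,y)\mapsto\min\{x+y,1\}$ (the minimum reflecting the constant extension $\overline{\test}$), $\Gamma^{\mathrm f}_t[\mu]$ is the daughter-particle measure produced by $\mathcal{B}$, and $\coreacc_t\lebesgue$ is the prescribed source; their total variations are bounded by $\tfrac12\kappa\variation{\mu}^2$, $2\upfrag\variation{\mu}$ and $2\upfrag\momentbound_{\gamma}$ respectively, the last two using Lemma \ref{uniform_bounds_operator}.

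Given any $\mu\in\solspaceinfpos$ (so TV-continuous, hence locally bounded in total variation) solving the weak equation, the multiplier $a_s[\mu_s](x)$ is bounded on $[0,T]\times(0,1]$, so Duhamel's trick applies: testing the equation against the time-dependent function $s\mapsto\test\exp(\int_0^s a_r[\mu_r](\cdot)\dd r)$ — admissible after checking, by a routine approximation, that the weak identity extends to bounded test functions that are $C^1$ in time — gives the equivalent \emph{mild} equation
\[
\integrate{\mu_t}{\test} = \integrate{\initialvalue}{\test\, E_{0,t}} + \int_0^t\integrate{\Gamma_s[\mu_s]}{\test\, E_{s,t}}\dd s,\qquad E_{s,t}(x):=\exp\!\Big(-\!\int_s^t a_r[\mu_r](x)\dd r\Big),
\]
valid for all $\test\in\bddborel$ and $t\ge0$; conversely, any $\mu\in\solspaceinfpos$ satisfying this identity solves the original problem. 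Two observations make the fixed point work. First, $\test\equiv 1$ in the weak equation, discarding the three nonpositive loss terms, gives the \emph{a priori} bound $\variation{\mu_t}=\mass_0(t)\le\variation{\initialvalue}+2\upfrag\momentbound_{\gamma}\,t$ for every solution. Second, the map $\Phi$ obtained by substituting a given $\nu$ for $\mu$ on the right-hand side of the mild equation (both in $\Gamma$ and in the exponents $E_{s,t}$) sends nonnegative measures to nonnegative measures, since $0\le E_{s,t}\le 1$ and $\Gamma_s[\nu_s]\ge 0$.

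For the local construction, fix $R_0>\variation{\initialvalue}$ and let $\mathcal X$ be the set of $\nu\in C([0,T_*];\measurepos)$ with $\nu_0=\initialvalue$ and $\sup_{[0,T_*]}\variation{\nu_t}\le R_0$, a complete metric space under $d(\nu,\tilde\nu)=\sup_{[0,T_*]}\variation{\nu_t-\tilde\nu_t}$. From $\variation{\Phi(\nu)_t}\le\variation{\initialvalue}+\int_0^t\variation{\Gamma_s[\nu_s]}\dd s$ one gets $\Phi(\mathcal X)\subseteq\mathcal X$ for $T_*$ small (depending only on $R_0,\kappa,\upfrag,\momentbound_{\gamma}$), and TV-continuity of $\Phi(\nu)$ in $t$ is automatic from the boundedness of $\Gamma$ and $a$. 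The contraction follows from the elementary bounds $|e^{-A}-e^{-B}|\le|A-B|$, $\supnorm{a_s[\nu]-a_s[\tilde\nu]}\le\kappa\,\variation{\nu-\tilde\nu}$, and $\variation{\Gamma_s[\nu]-\Gamma_s[\tilde\nu]}\le(\kappa R_0+2\upfrag)\variation{\nu-\tilde\nu}$, which together give $d(\Phi(\nu),\Phi(\tilde\nu))\le C(R_0)\,T_*\,d(\nu,\tilde\nu)$; shrinking $T_*$ further, Banach's theorem produces a unique fixed point, hence a solution on $[0,T_*]$. Restarting the construction from $\mu_{T_*}$ and iterating, the successive existence times are bounded below by a quantity of order $(1+m)^{-2}$ in the current mass $m$, while the mass grows at most linearly by the \emph{a priori} bound; a linearly growing bound forces the partial sums of the time steps to diverge, so the solution extends to all of $[0,\infty)$. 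Uniqueness on each $[0,T]$ follows from the mild equation and Gronwall's inequality, applied with $R_0=\sup_{[0,T]}\max\{\variation{\mu_t},\variation{\tilde\mu_t}\}<\infty$, no smallness of $T$ being needed. The step requiring most care is the equivalence between the weak and mild formulations — in particular, the justification that the weak identity may be tested against time-dependent functions so that Duhamel's trick is legitimate — together with the verification that $\Phi$ genuinely produces TV-continuous families of finite nonnegative measures; the fixed-point and Gronwall estimates themselves are routine once $K$ is bounded on $(0,1]^2$.
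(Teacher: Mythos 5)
Your overall architecture (a mild/Duhamel reformulation with the exponential weight built into the fixed-point map, so that positivity is automatic from $0\le E_{s,t}\le 1$ and $\Gamma\ge 0$) is a legitimate alternative to the paper, which instead runs a plain contraction for the unweighted map on a ball of \emph{signed} measures and only afterwards proves nonnegativity by freezing the integrating factor $\theta$ along the already-constructed solution and showing that the Picard iterates of the resulting map are nonnegative. The price of your route is precisely the step you flag: the equivalence of the weak identity with the mild equation, i.e.\ the justification of testing against time-dependent functions $\test(x)\exp\big(\int_0^s a_r[\mu_r](x)\dd r\big)$; this is standard but must be carried out, since both your existence and your uniqueness claims pass through it, whereas the paper's uniqueness comes directly from the contraction estimate for the unweighted map.

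There is, however, one step that is genuinely wrong as written: the \emph{a priori} bound obtained by ``taking $\test\equiv 1$ and discarding the three nonpositive loss terms'' to get $\variation{\mu_t}\le\variation{\initialvalue}+2\upfrag\momentbound_{\gamma}\,t$. With $\test_0=\chi_{(0,1]}$ the fragmentation term is a \emph{gain} for the zeroth moment, not a loss: $\opb{\test_0}(x)=-\tfrac12\int_0^xF(x-y,y)\dd y\le 0$, so $-\integrate{\mu_s}{\opb{\test_0}}\ge 0$ and cannot be dropped (binary fragmentation increases the number of particles; with $g=0$, $K=0$ and $F>0$ the total variation strictly increases, contradicting your inequality). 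The correct statement is either the Gr\"onwall bound $\variation{\mu_t}\le\big(\variation{\initialvalue}+2\upfrag\momentbound_{\gamma}t\big)e^{\upfrag t}$, using $-\opb{\test_0}(x)\le\upfrag$, or the paper's sharper device of first testing with $\test_1(x)=x$ (for which $\opb{\test_1}=0$ and $\opa{\test_1}\le 0$, giving a linear-in-time bound on $\mass_1$) and then controlling the fragmentation gain in the zeroth-moment estimate by $\tfrac{\upfrag}{1+\gamma}\mass_{1+\gamma}\le\tfrac{\upfrag}{1+\gamma}\mass_1$. Your continuation argument survives this correction — it only needs the a priori bound to be finite on each bounded interval, so that the local existence time stays bounded below there and finitely many steps cover $[0,T]$ — but the specific ``linear growth forces divergence of the time steps'' reasoning should be replaced accordingly.
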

\begin{proof}
By equation \eqref{fragmentation_upper_bound} we have that
\[\reacc(t, x) \leq \powerlaw \momentbound_{\beta}, \quad \coreacc(t, x) \leq 2 \upfrag \momentbound_{\gamma}, \quad \sup_{x, y \in (0,1]} F(x, y) \leq 2 \upfrag
\]
and we write $\displaystyle \kbound = \sup_{x, y \in (0,1]} K(x, y).$

Fix $T > 0.$
We let
\begin{equation*}
R = \variation{\initialvalue} + \dfrac{\upfrag }{1 + \gamma}\big(\variation{\initialvalue} + 2 \upfrag \momentbound_{\gamma} T \big) T +  2 \upfrag \momentbound_{\gamma}
\end{equation*}
and choose $\tau$ with $0 < \tau \leq T$ such that
\begin{equation*}
\begin{split}
\big(6 \kbound R^2  + 2 \big(\powerlaw \momentbound_{\beta} + 3 \upfrag\big) R + 2 \upfrag \momentbound_{\gamma}\big) \tau \leq R \\
\left(6 \kbound + \powerlaw \momentbound_{\beta} + 3 \upfrag \right) \tau < 1.
\end{split}
\end{equation*}

For $ r > 0$ let $B(0, r) = \{\mu \in \measure \suchthat \variation{\mu} \leq r\}.$

We let $X_{\tau} = C\big([0,\tau]; B(0,2R)\big)$ and write $\norm{\mu}_{X_{\tau}} = \sup_{t\in[0,\tau]} \variation{\mu_t}.$

$X_{\tau}$ is a complete metric space with metric $d(\mu, \lambda) = \norm{\mu - \lambda}_{X_{\tau}}.$

\textbf{Step 1. Contraction mapping}
Consider the map $P$ defined on $\mu \in X_{\tau}$ by
\begin{equation*}
\begin{split}
\integrate{(P\mu)_t}{\test} = \integrate{\initialvalue}{\test} + \int_{0}^{t}
\Big(\dfrac{1}{2}\integrate{\mu_s \otimes \mu_s}{ K \opa{\test}} - \integrate{\mu_s}{\reacc_s \test + \opb{\test}} + \integrate{\lebesgue}{\coreacc_s \test}
\Big)\dd s
\end{split}
\end{equation*}
for $t\in[0,\tau]$, $\test\in \vanishing$.
If $\supnorm{\test} \leq 1,$ then we have $\quad\abs{\integrate{(P\mu)_t}{\test}} \leq$
\begin{equation*}
\begin{split}
&\leq \variation{\initialvalue} + \dfrac{3 \kbound}{2}\int_{0}^{t} \variation{\mu_s}^{2} \dd s + \big(\powerlaw \momentbound_{\beta} + 3 \upfrag\big) \int_{0}^{t} \variation{\mu_s} \dd s + 2 \upfrag \momentbound_{\gamma} \, t \\
&\leq \variation{\initialvalue} + \Big(\dfrac{3 \kbound}{2} \norm{\mu}_{X_{\tau}}^{2}  + \big(\powerlaw \momentbound_{\beta} + 3 \upfrag\big) \norm{\mu}_{X_{\tau}} + 2 \upfrag \momentbound_{\gamma}\Big) \tau \\
&\leq \variation{\initialvalue} + \big(6 \kbound R^2  + 2 \big(\powerlaw \momentbound_{\beta} + 3 \upfrag\big) R + 2 \upfrag \momentbound_{\gamma}\big) \tau \leq 2R.
\end{split}
\end{equation*}
whence $\variation{(P\mu)_t} \leq 2R$ and $P(X_{\tau}) \subset X_{\tau}.$
For $\mu, \nu \in X_{\tau}$
\begin{equation*}
\begin{split}
\abs{\integrate{(P\mu)_t - (P\nu)_t}{\test}} \leq  \dfrac{1}{2}\int_{0}^{t} \abs{\integrate{(\mu_s + \nu_s)\otimes (\mu_s - \nu_s)}{ K \opa{\test}}}\dd s + \\
+ \int_{0}^{t} \big|\integrate{\mu_s - \nu_s}{\abs{\reacc_s\test} + \abs{\opb{\test}}}\big| \dd s \leq \left(6 \kbound + \powerlaw \momentbound_{\beta} + 3 \upfrag \right) \tau \norm{\mu - \nu}_{X_{\tau}}
\end{split}
\end{equation*}
but $\left(6 \kbound + \powerlaw \momentbound_{\beta} + 3 \upfrag \right) \tau < 1$ by assumption, hence $P$ is a contraction mapping and must have a unique fixed point $\fixedpoint \in X_{\tau}.$

\textbf{Step 2. Nonnegativity of the solution}

To show that $\fixedpoint_t$ is actually in $\measurepos$ for each $t\in[0,\tau],$ we introduce the integrating factor
\begin{equation*}
\theta(t, x) = \exp{\left( \int_{0}^{t} \left[\reacc(s, x) + \int_{0}^{x} F(x - y, y) \dd y + \int_{(0,1]} K(x, y) \dd\mu_s(y) \right] \dd s \right)}
\end{equation*}
which is differentiable in $t$ with derivative
\begin{equation*}
\partial_t\theta(t, x) = \theta(t, x) \cdot \left(\reacc_t(x) + \int_{0}^{x} F(x - y, y) \dd y + \int_{(0,1]} K(x, y) \dd\mu_t(y) \right).
\end{equation*}

Both functions $\theta$ and $\partial_t \theta$ are Borel and bounded in $[0,\tau]\times (0,1]$.
We write $\theta'_{t}$ for the map $t \mapsto \partial_t \theta (t, \cdot).$

If we consider the measure $\fixedpos_t = \theta_t \fixedpoint \in \measure$ and write $S(\test)(x, y) = \overline{\test}(x + y),$ $\opd{\test}(x) = \int_{0}^{x} F(x - y, y) \test(y) \dd y,$ we see that $\fixedpos$ satisfies
\begin{equation*}
\begin{split}
&\integrate{\partial_t\fixedpos_{t}}{\test} = \integrate{\partial_t \fixedpoint_{t}}{\theta_{t} \test} + \integrate{\fixedpoint_{t}}{\theta'_{t} \test} = \\
&= \dfrac{1}{2}\integrate{\fixedpoint_t \otimes \fixedpoint_t}{ K S(\theta_t\test)} + \integrate{\mu_t}{\opd{\theta_t \test}} + \integrate{\lebesgue}{ \coreacc_t \theta_t \test}\\
&= \dfrac{1}{2}\integrate{\fixedpos_t \otimes \fixedpos_t}{ K S(\theta_t\test) (\theta_t^{-1} \otimes \theta_t^{-1})} + \integrate{\fixedpos_t}{\theta_t^{-1}\opd{\theta_t \test}} + \integrate{\lebesgue}{ \coreacc_t \theta_t \test}.
\end{split}
\end{equation*}

Let $\integrate{\mathcal{Q}_t(\nu)}{\test} = \dfrac{1}{2}\integrate{\nu \otimes \nu}{K S(\theta_t\test) (\theta_t^{-1} \otimes \theta_t^{-1})} +\integrate{\nu}{\theta_t^{-1}\opd{\theta_t \test}} + \integrate{\lebesgue}{ \coreacc_t \theta_t \test},$
then
\begin{equation*}
\integrate{\fixedpos_{t}}{\test} = \integrate{\initialvalue}{\test} +  \int_{0}^{t} \integrate{\mathcal{Q}_s(\fixedpos_s)}{\test} \dd s =: \integrate{(Q\fixedpos)_t}{\test}
\end{equation*}
and $\fixedpos$ is a fixed point of the map $Q.$

Using the bounds
\begin{equation*}
1 \leq \theta(t, x) \leq \exp{\tau\left( \powerlaw \momentbound_{\beta} +  2 \upfrag + 2 \kbound R \right)}
\end{equation*}
it is not hard to see that $Q$ is a contraction mapping in $C\big([0,\widetilde{\tau}]; B(0,L)\big)$ for some $L = L(R) > 0$ and a sufficiently small $\widetilde{\tau} = \widetilde{\tau}(R) \leq \tau.$
Thus $\fixedpos$ must be the unique fixed point in this space.

Consider the approximating sequence
\begin{equation*}
\fixedpos_t^{0} = \initialvalue, \quad \fixedpos_t^{n} = \initialvalue + \int_{0}^{t} \mathcal{Q}_s (\fixedpos_s^{n-1}) \dd s \quad \text{ for } n \geq 1.
\end{equation*}
Since $\initialvalue \in \measurepos,$ for nonnegative $\test \in \vanishing$ we have
$\integrate{\fixedpos_t^{n}}{\test} \geq 0$ for all $n \in \naturals,$ $t \in [0, \widetilde{\tau}].$
Taking the limit as $n \to \infty$ we see that
$\integrate{\fixedpos_t}{\test} \geq 0$ and hence $\fixedpos_t \in \measurepos$ for all $t \in [0,\widetilde{\tau}]$.
Since $\integrate{\fixedpoint_t}{\test} = \integrate{\fixedpos_t}{\theta_t^{-1}\test}$ and $\theta_t > 0$, we also have $\mu_t \in \measurepos$ for all $t\in[0,\widetilde{\tau}]$.

\textbf{Step 3. Extending the solution to all times}
Let $\test_1(x) = x > 0,$ then
\begin{equation*}
\opa{\test_1}(x, y) \leq 0, \quad \opb{\test_1}(x) = 0
\end{equation*}
and plugging $\test_1$ in the equation satisfied by $\fixedpoint,$ using that $\mu_t \in \measurepos$
\begin{equation}\label{useful_mass_bound}
\mass_{1}(t) = \integrate{\fixedpoint_t}{\test_1} \leq \integrate{\initialvalue}{\test_1} +  \int_{0}^{t} \integrate{\lebesgue}{\coreacc_s\test_1} \dd s \leq \variation{\initialvalue} + 2 \upfrag \momentbound_{\gamma} T
\end{equation}
for $t \in [0, \widetilde{\tau}].$
Let $\test_0(x) = \chi_{(0, 1]}(x),$
then $\opa{\test_0}(x, y) = - 1$ for $x, y \in (0,1]$
and
\begin{equation*}
-\opb{\test_0}(x) = \dfrac{1}{2}\int_{0}^{x} F(x - y, y) \dd y \leq \dfrac{\upfrag }{1 + \gamma} x^{1 + \gamma},
\end{equation*}
which combined with \eqref{useful_mass_bound} yields
\begin{equation*}
-\integrate{\fixedpoint_t}{\opb{\test_0}} \leq \dfrac{\upfrag }{1 + \gamma} \mass_{1 + \gamma}(t) \leq
\dfrac{\upfrag }{1 + \gamma}\big(\variation{\initialvalue} + 2 \upfrag \momentbound_{\gamma} T \big).
\end{equation*}
It follows that
\begin{equation*}
\begin{split}
&\variation{\fixedpoint_t} = \integrate{\fixedpoint_t}{\test_0} \leq \integrate{\initialvalue}{\test_0} - \int_{0}^{t} \integrate{\fixedpoint_s}{\opb{\test_0}} \dd s + \int_{0}^{t} \integrate{\lebesgue}{\coreacc_s\test_0} \dd s \leq\\
&\quad\leq \variation{\initialvalue} + \dfrac{\upfrag }{1 + \gamma}\big(\variation{\initialvalue} + 2 \upfrag \momentbound_{\gamma} T \big) T +  2 \upfrag \momentbound_{\gamma} \leq R
 \end{split}
\end{equation*}
for $t \in [0, \widetilde{\tau}].$

To conclude the reasoning, we note that in Step 1 we only used the bounds $\variation{\initialvalue} \leq R$ and the remaining estimates did not depend on the initial condition.
This way, redefining $P$ by letting $\initialvalue \to \fixedpoint_{\widetilde{\tau}},$ $g(t, x) \to g(\widetilde{\tau} + t, x),$ the fixed point equation $\mu = P\mu$ again has a unique solution in $X_{\widetilde{\tau}}$ which gives an extension of $\fixedpoint$ to the interval $[0, \min{\{2 \widetilde{\tau}, T\}}].$

By induction we see that the solution may be extended to $[0, T]$.
Since $T$ was arbitrary, we deduce that the constructed solution $\fixedpoint$ is in $\solspaceinfpos.$

The global uniqueness is now clear by the local uniqueness.
\end{proof}

From now on we assume that $K$ is a singular kernel satisfying \eqref{upper_bound_singular_kernel}.

\subsection{A priori estimates}

For each $j \in \naturals$ we consider the bounded coagulation kernel $\approxkernel = K \cdot \chi_{\{x, y > 1/j\}}.$
Each of the kernels has an associated $\reacc$-term $\approxreacc_t(x) = \int_1^{\infty} \approxkernel(x, y) g(t, y) \dd y$ and determines a unique solution $\approxsol \in \solspaceinfpos$ of the boundary valued coagulation fragmentation equation.

We derive some \emph{a priori} estimates for this sequence of solutions:
\begin{proposition}\label{a_priori_estimates_fragmentation}
Let $T > 0$ and assume $\momentbound_{\gamma} < \infty$.
Then there exists $C_T > 0$ depending on $\variation{\initialvalue}$ such that for all $\delta \in (0, 1)$ and $j \in \naturals$ we have
\[\sup_{t \in [0, T]}\variation{\approxsol_t} \leq C_T (1 + \momentbound_{\gamma}), \quad \sup_{t \in [0, T]} \approxsol_{t}(0,\delta) \leq \initialvalue(0, \delta) + C_{T}(1 + \momentbound_{\gamma}) \delta.\]
\end{proposition}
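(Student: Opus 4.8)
The plan is to obtain both estimates by testing the solution equation \eqref{a_solution} (with $K$ replaced by $\approxkernel$) against carefully chosen test functions and invoking the bounds of Lemma \ref{uniform_bounds_operator}, exactly as was done in Step~3 of the proof of Proposition \ref{wp_bdd_frag}. The crucial point is that all the relevant constants in Lemma \ref{uniform_bounds_operator} — namely $8\upperbound\lipnorm{\test}$, $3\upfrag\supnorm{\test}$, $4\upperbound\massbdy_\beta(t)\lipnorm{\test}$, $2\upfrag\massbdy_\gamma(t)$ — are \emph{independent of $j$}, since truncating $K$ to $\{x,y>1/j\}$ only decreases the kernel and hence preserves all upper bounds \eqref{upper_bound_singular_kernel}. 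Note, however, that the bound on $\reacc$ involves $\massbdy_\beta$, so to get estimates uniform in $t\in[0,T]$ that depend only on $\variation{\initialvalue}$ one should restrict to test functions $\test$ with $\opa{\test}\le 0$ and $\reacc_t\test\le 0$, so that the coagulation and $\reacc$ terms drop out entirely and only the fragmentation terms (controlled by $\momentbound_\gamma$) remain. This is the reason the statement only needs $\momentbound_\gamma<\infty$ and not $\momentbound_\beta<\infty$.

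For the first inequality I would take $\test_0(x)=\chi_{(0,1]}(x)$, so that $\variation{\approxsol_t}=\integrate{\approxsol_t}{\test_0}$ since $\approxsol_t\ge 0$. Then $\opa{\test_0}(x,y)=-1\le 0$ and $\reacc_t(x)\test_0(x)=\reacc_t(x)\ge 0$, so both of these contribute nonpositively to the right-hand side of \eqref{a_solution}; one is left with
\[
\variation{\approxsol_t} \le \integrate{\initialvalue}{\test_0} - \int_0^t \integrate{\approxsol_s}{\opb{\test_0}}\dd s + \int_0^t \integrate{\lebesgue}{\coreacc_s \test_0}\dd s.
\]
As in Step~3 of Proposition \ref{wp_bdd_frag}, $-\opb{\test_0}(x)=\tfrac12\int_0^x F(x-y,y)\dd y \le \tfrac{\upfrag}{1+\gamma}x^{1+\gamma}\le \tfrac{\upfrag}{1+\gamma}$, so $-\integrate{\approxsol_s}{\opb{\test_0}}\le \tfrac{\upfrag}{1+\gamma}\variation{\approxsol_s}$, while $\integrate{\lebesgue}{\coreacc_s\test_0}\le 2\upfrag\momentbound_\gamma$ by Lemma \ref{uniform_bounds_operator}. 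This yields
\[
\variation{\approxsol_t} \le \variation{\initialvalue} + 2\upfrag\momentbound_\gamma\, T + \frac{\upfrag}{1+\gamma}\int_0^t \variation{\approxsol_s}\dd s,
\]
and Grönwall's inequality gives $\sup_{t\in[0,T]}\variation{\approxsol_t}\le \big(\variation{\initialvalue}+2\upfrag\momentbound_\gamma T\big)e^{\upfrag T/(1+\gamma)}$, which is of the form $C_T(1+\momentbound_\gamma)$ with $C_T$ depending only on $T$ and $\variation{\initialvalue}$.

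For the second inequality I would use a test function adapted to the interval $(0,\delta]$: take $\test_\delta$ to be the continuous piecewise-linear function equal to $1$ on $(0,\delta/2]$, equal to $0$ on $[\delta,1]$ and linear in between (or, more simply, approximate $\chi_{(0,\delta]}$ from above by such Lipschitz functions and pass to the limit — allowed since by Theorem \ref{improvement_result} or by a density argument \eqref{a_solution} extends to bounded Borel $\test$, but one can also just work with $\chi_{(0,\delta]}$ directly, checking that $\opa{\chi_{(0,\delta]}}\le 0$ pointwise). The key observations are: $\opa{\chi_{(0,\delta]}}(x,y)\le 0$ (adding volumes can only move a pair out of $(0,\delta]$), and $\reacc_t\,\chi_{(0,\delta]}\ge 0$, so again these terms are discarded; $-\integrate{\approxsol_s}{\opb{\chi_{(0,\delta]}}}$ is controlled using $\supnorm{\chi_{(0,\delta]}}=1$ together with the observation that $\opb{\chi_{(0,\delta]}}(x)$ vanishes for $x>\delta$ and is bounded by a multiple of $x^{1+\gamma}\le \delta^{1+\gamma}\le\delta$ times a constant; and $\integrate{\lebesgue}{\coreacc_s\,\chi_{(0,\delta]}} = \int_0^\delta \coreacc_s(x)\dd x \le 2\upfrag\momentbound_\gamma\,\delta$ by Lemma \ref{uniform_bounds_operator}. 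Collecting these,
\[
\approxsol_t(0,\delta) \le \initialvalue(0,\delta) + C\big(\upfrag,\gamma,\sup_{s\le T}\variation{\approxsol_s}\big)(1+\momentbound_\gamma)\,\delta\, T,
\]
and substituting the first estimate for $\sup_{s\le T}\variation{\approxsol_s}$ gives the claim with a possibly larger $C_T$ depending on $\variation{\initialvalue}$.

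The main obstacle I anticipate is purely technical: justifying the use of the non-Lipschitz test functions $\chi_{(0,1]}$ and $\chi_{(0,\delta]}$, since Definition \ref{definition_of_solution} only guarantees \eqref{a_solution} for $\test\in\lip$. For $\chi_{(0,1]}$ this is handled exactly as in Proposition \ref{wp_bdd_frag} (where it was legitimate because for \emph{bounded} kernels \eqref{a_solution} holds for all bounded Borel $\test$), and for $\chi_{(0,\delta]}$ one approximates by Lipschitz functions $\test_\delta^n\uparrow\chi_{(0,\delta]}$ vanishing at $0$, checks that $\opa{\test_\delta^n}\le 0$ and that the fragmentation and source terms converge by dominated convergence (using $\variation{\approxsol_s}\le C_T(1+\momentbound_\gamma)$ uniformly), and passes to the limit; one must take care that the inequality direction is preserved when discarding the nonpositive coagulation and $\reacc$ contributions. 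Everything else is a routine repetition of the estimates already carried out in Step~3 of the proof of Proposition \ref{wp_bdd_frag}, with the single bookkeeping point that the constants produced are manifestly independent of both $j$ and $\delta$.
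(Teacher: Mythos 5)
Your proposal follows essentially the same route as the paper: the total variation bound is obtained by testing with $\chi_{(0,1]}$, so that the coagulation and $\reacc$ terms have a favorable sign and drop out, followed by Gr\"onwall, and the tightness bound by testing with the indicator of $(0,\delta)$ and inserting the first estimate. Your worry about justifying non-Lipschitz test functions is moot: the $\approxsol$ are solutions for the \emph{bounded} kernels $\approxkernel$, so Proposition \ref{wp_bdd_frag} already gives \eqref{a_solution} for all $\test\in\bddborel$, and the paper simply tests with the indicators directly, with constants manifestly independent of $j$ and $\delta$ exactly as you note.

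One intermediate claim in your second estimate is wrong, although it does not affect the final bound: $\opb{\chi_{(0,\delta]}}(x)$ does \emph{not} vanish for $x>\delta$. By symmetry of $F$ one has, for $x>\delta$, $\opb{\chi_{(0,\delta]}}(x)=-\int_0^{\delta}F(x-y,y)\dd y$, reflecting that fragmentation of clusters of size larger than $\delta$ creates fragments inside $(0,\delta]$. This omitted term is bounded below by $-2\upfrag\delta$, so it only contributes another term of order $\delta\int_0^t\variation{\approxsol_s}\dd s$, which is absorbed exactly as in your argument; the paper's proof tracks it explicitly, obtaining $\opb{\test_\delta}(x)\geq-\upfrag\frac{4+\gamma}{1+\gamma}\delta$ uniformly in $x\in(0,1]$, and then concludes as you do by substituting the first estimate for $\sup_{s\le T}\variation{\approxsol_s}$. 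With that correction your proof coincides with the paper's.
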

\begin{proof}
Let $\test_{0}(x) = \chi_{(0, 1]}(x).$
Computing, we have
\[\opa{\test_{0}}(x, y) = -1 \quad \text{ for } x, y \leq 1, \quad \opb{\test_{0}}(x) = - \dfrac{1}{2}\int_{0}^{x} F(x - y, y) \dd y \quad \text{ for } x \leq 1\]
and testing \eqref{a_solution} with $\test_{0},$ by \eqref{fragmentation_upper_bound}
\begin{equation*}
\begin{split}
\variation{\approxsol_t} \leq \variation{\initialvalue} + \upfrag \int_{0}^{t}  \variation{\approxsol_s} \dd s + 2 \upfrag \momentbound_{\gamma} t.
\end{split}
\end{equation*}
Applying Grönwall's inequality
\begin{equation*}
\sup_{t \in [0, T]}\variation{\approxsol_t} \leq \left(\variation{\initialvalue} + 2 \upfrag \momentbound_{\gamma} T \right) e^{\upfrag T},
\end{equation*}
which gives the first claim.

For $\delta \in (0, 1)$ let $\test_{\delta}(x) = \chi_{(0, \delta)}(x).$
We have
$\opa{\test_{\delta}}(x, y) \leq 0$ for $x, y \in (0,1]$ and
\begin{equation*}
\begin{split}
\opb{\test_{\delta}}(x) &= \dfrac{1}{2}\int_{0}^{x} F(x - y, y) \dd y \, \chi_{(0, \delta)}(x) -  \int_{0}^{x} F(x - y, y) \chi_{(0, \delta)}(y) \dd y \\
&\geq -\chi_{(0, \delta)}(x) \int_{0}^{x} F(x - y, y) \dd y  - \chi_{(\delta, 1]}(x) \int_{0}^{\delta} F(x - y, y) \dd y \\
&\geq - \dfrac{\upfrag}{1 + \gamma} \Big( 2 x^{1 + \gamma} \chi_{(0, \delta)}(x) + \chi_{(\delta, 1]}(x) \big(\delta^{1 + \gamma} + x^{1 + \gamma} - (x - \delta)^{1 + \gamma}\big)\Big) \\
&\geq - \dfrac{\upfrag}{1 + \gamma} \Big( 3\delta^{1 + \gamma} + (1 + \gamma) x^{\gamma} \delta\Big) \geq - \upfrag\dfrac{4 + \gamma}{1 + \gamma} \delta
\end{split}
\end{equation*}

Plugging $\test_{\delta}$ in \eqref{a_solution}
\begin{equation*}
\begin{split}
\approxsol_{t}(0,\delta) &\leq \initialvalue(0, \delta) - \int_{0}^{t} \integrate{\approxsol_s}{\opb{\test_{\delta}}} \dd s + \int_{0}^{t} \integrate{\lebesgue}{\coreacc_s \test_{\delta}} \dd s \\
&\leq \initialvalue(0, \delta) + \upfrag\dfrac{4 + \gamma}{1 + \gamma} \delta \int_{0}^{t} \variation{\mu_s} \dd s + 2 \upfrag \momentbound_{\gamma} \delta \, t
\end{split}
\end{equation*}
and the second claim is a consequence of the first.
\end{proof}

\begin{proposition}
\label{equicontinuity_frag}
Let $T > 0$ and assume $\momentbound_{\max{\{\beta, \gamma\}}} < \infty.$
Then there exists $C_T > 0$ depending on $\variation{\initialvalue}$ such that
\begin{equation}
\abs{\integrate{\approxsol_t - \approxsol_s}{\test}} \leq C_T \big( 1 + \momentbound_{\max{\{\beta, \gamma\}}} \big)^2 \big(\supnorm{\test} + \lipnorm{\test} \big)\abs{t - s}
\end{equation}
for all $\test \in \lip,$ $t, s \in [0, T]$ and $j \in \naturals.$
\end{proposition}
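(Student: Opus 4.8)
The plan is to start from the integral identity \eqref{a_solution} satisfied by each approximate solution $\approxsol$ and to bound its right‑hand side uniformly in $j$. Fix $0 \le s \le t \le T$ (the case $t < s$ follows by symmetry) and $\test \in \lip$; note that $\test$ vanishes at $0$ and is Lipschitz, so $\supnorm{\test} \le \lipnorm{\test} < \infty$. Subtracting \eqref{a_solution} at time $s$ from the same identity at time $t$ gives
\begin{equation*}
\integrate{\approxsol_t - \approxsol_s}{\test} = \int_{s}^{t}\Big(\dfrac{1}{2}\integrate{\approxsol_r \otimes \approxsol_r}{\approxkernel \opa{\test}} - \integrate{\approxsol_r}{\approxreacc_r \test + \opb{\test}} + \integrate{\lebesgue}{\coreacc_r \test}\Big)\dd r,
\end{equation*}
so it suffices to bound the integrand, uniformly for $r \in [0,T]$ and $j \in \naturals$, by $C_T(1 + \momentbound_{\max\{\beta,\gamma\}})^2(\supnorm{\test} + \lipnorm{\test})$.

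First I would observe that the truncation does not degrade the operator estimates: since $\approxkernel = K\,\chi_{\{x,y > 1/j\}} \le K$ pointwise, $\approxkernel$ still satisfies \eqref{upper_bound_singular_kernel} with the same constant $\upperbound$, and $\approxreacc_r(x) = \int_1^\infty \approxkernel(x,y)\,g(r,y)\dd y \le \reacc(r,x)$. Hence all four bounds of Lemma \ref{uniform_bounds_operator} hold verbatim with $K$ replaced by $\approxkernel$, and applying them termwise yields
\begin{align*}
\abs{\tfrac{1}{2}\integrate{\approxsol_r \otimes \approxsol_r}{\approxkernel \opa{\test}}} &\le 4\upperbound \lipnorm{\test}\,\variation{\approxsol_r}^{2},\\
\abs{\integrate{\approxsol_r}{\approxreacc_r \test}} &\le 4\upperbound \massbdy_{\beta}(r)\,\lipnorm{\test}\,\variation{\approxsol_r},\\
\abs{\integrate{\approxsol_r}{\opb{\test}}} &\le 3\upfrag \supnorm{\test}\,\variation{\approxsol_r},\\
\abs{\integrate{\lebesgue}{\coreacc_r \test}} &\le 2\upfrag \massbdy_{\gamma}(r)\,\supnorm{\test},
\end{align*}
where in the last line I used $\lebesgue\big((0,1]\big) = 1$.

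Next I would feed in the linear \emph{a priori} estimate. Since $\momentbound_{\max\{\beta,\gamma\}} < \infty$ forces $\momentbound_\gamma < \infty$, Proposition \ref{a_priori_estimates_fragmentation} gives $\sup_{r \in [0,T]}\variation{\approxsol_r} \le C_T(1 + \momentbound_\gamma) \le C_T(1 + \momentbound_{\max\{\beta,\gamma\}})$, with $C_T$ independent of $j$; moreover $\massbdy_{\beta}(r) \le \momentbound_{\beta}$ and $\massbdy_{\gamma}(r) \le \momentbound_{\gamma}$, and both $\momentbound_\beta$ and $\momentbound_\gamma$ are $\le \momentbound_{\max\{\beta,\gamma\}}$ because moments of measures supported in $[1,\infty)$ increase with the order. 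Substituting into the four displayed bounds, the quadratic coagulation term contributes a factor $(1 + \momentbound_{\max\{\beta,\gamma\}})^2$, the three linear terms contribute at most $(1 + \momentbound_{\max\{\beta,\gamma\}})^2$ after enlarging the constant, and each term carries the factor $\supnorm{\test} + \lipnorm{\test}$. Summing and integrating over $r \in [s,t]$ produces the factor $|t-s|$, which is the asserted inequality.

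I do not expect any real obstacle: the argument is a mechanical combination of Lemma \ref{uniform_bounds_operator} with Proposition \ref{a_priori_estimates_fragmentation}. The only points requiring a modicum of care are (i) checking that the pointwise inequality $\approxkernel \le K$ lets the operator estimates pass to the truncated kernels with unchanged constants, and (ii) bookkeeping the powers of $1 + \momentbound_{\max\{\beta,\gamma\}}$, so that the exponent $2$ in the statement is accounted for by the quadratic coagulation term while the linear terms are absorbed into $C_T$.
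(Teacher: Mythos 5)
Your proposal is correct and follows essentially the same route as the paper: subtract the integral identity \eqref{a_solution} at times $t$ and $s$, bound the four terms of the integrand via Lemma \ref{uniform_bounds_operator} (valid for $\approxkernel$ since $\approxkernel \leq K$), and control $\variation{\approxsol_r}$ uniformly in $j$ by Proposition \ref{a_priori_estimates_fragmentation}, yielding the constant $C_T\big(1+\momentbound_{\max\{\beta,\gamma\}}\big)^2$ after integrating over $[s,t]$. Your explicit remark that the truncated kernels inherit the bound \eqref{upper_bound_singular_kernel} with the same constant is a point the paper leaves implicit, but otherwise the arguments coincide.
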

\begin{proof}
Without loss of generality $s < t.$
Substracting the equation for $\approxsol$ at times $t$ and $s$, applying Lemma \ref{uniform_bounds_operator} and Proposition \ref{a_priori_estimates_fragmentation} to the integrands
\begin{equation*}
\begin{split}
&\abs{\integrate{\approxsol_t - \approxsol_s}{\test}} \leq \\
&\leq \int_{s}^{t} \Big( \abs{\integrate{\approxsol_r}{\approxreacc_r \test}} +\abs{\integrate{\approxsol_r}{\opb{\test}}} + \dfrac{1}{2}\abs{\integrate{\approxsol_r \otimes \approxsol_r}{\approxkernel \opa{\test}}} + \abs{\integrate{\lebesgue}{\coreacc_r \test}} \Big)\dd r \\
& \leq \int_{s}^{t} \Big( \variation{\approxsol_r}4 \upperbound \massbdy_{\beta}(r)\lipnorm{\test} + \variation{\approxsol_r} 3 \upfrag \supnorm{\test}\Big) \dd r\\
 &\quad + \int_{s}^{t} \Big(  \variation{\approxsol_r}^2 4 \upperbound \lipnorm{\test} + 2 \upfrag \massbdy_{\gamma}(r)\supnorm{\test} \Big) \dd r\\
 &\leq C_T \Big(\big(1 + \momentbound_{\max{\{\beta, \gamma\}}}\big)^2 \lipnorm{\test} + \big(1 + \momentbound_{\max{\{\beta, \gamma\}}}\big) \supnorm{\test} \Big) |t - s|
\end{split}
\end{equation*}
for some $C_T > 0$ additionally depending on $\variation{\initialvalue}.$
\end{proof}

\subsection{Compactness}

The space $\vanishing$ is separable by functions in $\lip,$ more specifically
\begin{lemma}
\label{density}
There is a sequence $\{\test_n\}_{n\in \naturals} \subset \mathrm{Lip}_0(0,1]$ with $\norm{\test_n}_{\infty} \leq 2$ such that for every $\ep > 0$ and $f \in C_{0}(0,1]$ with $\norm{f}_{\infty} \leq 1$ we have $\norm{f - \test_k}_{\infty} < \ep$ for some $k\in\naturals.$
\end{lemma}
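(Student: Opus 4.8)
The statement amounts to the separability of $\vanishing$ in the supremum norm, with the refinement that the dense sequence can be taken inside $\lip$ and with the uniform bound $\supnorm{\test_n}\le 2$. The plan is to exhibit such a sequence explicitly, built from piecewise affine functions with rational data. I will first record the elementary observation that every $f\in\vanishing$ extends continuously to $[0,1]$ with value $0$ at $0$: by definition $f$ is a uniform limit of functions $h\in\compactsupp$, and each such $h$ vanishes on some interval $(0,a)$, so $\lim_{x\to 0^+}f(x)=0$. In particular every $f\in\vanishing$ is uniformly continuous on $(0,1]$.

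Next I would define $\mathcal{F}$ to be the collection of functions on $(0,1]$ obtained by restricting a continuous, piecewise affine map $p\colon[0,1]\to\real$ whose breakpoints form a finite subset of $\rational\cap[0,1]$ containing $0$ and $1$, whose value at each breakpoint is rational, whose value at $0$ is $0$, and with $\supnorm{p}\le 2$. Each such function is Lipschitz on $[0,1]$ (finitely many affine pieces, so $\lipnorm{p}$ equals the largest absolute slope) and vanishes at $0$, hence $\mathcal{F}\subset\lip$; moreover $\supnorm{\test}\le 2$ for every $\test\in\mathcal{F}$, and $\mathcal{F}$ is countable, so we may enumerate it as $\{\test_n\}_{n\in\naturals}$.

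It remains to carry out the approximation. Fix $f\in\vanishing$ with $\supnorm{f}\le 1$ and $\ep>0$; without loss of generality $\ep\le 1$. Using the uniform continuity of $f$ on $[0,1]$, pick rational points $0=x_0<x_1<\dots<x_m=1$ fine enough that $f$ has oscillation less than $\ep/4$ on each $[x_{i-1},x_i]$, and let $g_0$ be the affine interpolant of the data $(x_i,f(x_i))_{i=0}^m$. Then $\supnorm{g_0-f}\le\ep/4$, and since an affine interpolant attains its extrema at the nodes, $\supnorm{g_0}\le\max_i|f(x_i)|\le\supnorm{f}\le 1$; also $g_0(0)=f(0)=0$. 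Now choose rationals $q_i$ with $q_0=0$ and $|q_i-f(x_i)|<\ep/4$ for $i\ge 1$, and let $g$ be the affine interpolant of $(x_i,q_i)_{i=0}^m$; since $g-g_0$ is the affine interpolant of the data $(x_i,q_i-f(x_i))$, we get $\supnorm{g-g_0}<\ep/4$. Consequently $\supnorm{g-f}<\ep/2<\ep$, $\supnorm{g}\le\supnorm{g_0}+\ep/4\le 5/4\le 2$, and $g(0)=0$, so $g\in\mathcal{F}$. Writing $g=\test_k$ gives $\supnorm{f-\test_k}<\ep$, as required.

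The argument is routine; the only points needing care are that the bound $\supnorm{\cdot}\le 2$ must survive both the interpolation and the subsequent rationalization of the node values — which is why the $\ep/4$ error budget is split as above — and that the approximants genuinely lie in $\lip$, which is ensured by building $\mathcal{F}$ from piecewise affine functions with a node pinned at $(0,0)$. (Alternatively one could deduce the existence of some dense sequence from the separability of $C_0$ of a second-countable locally compact space, but the explicit construction above delivers the membership in $\lip$ and the norm bound at once.)
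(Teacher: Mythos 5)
Your proof is correct, and it takes a somewhat different route from the paper's. The paper builds its countable family from polynomials with rational coefficients on $[1/n,1]$, extended to $(0,1/n)$ by the linear map $x\mapsto n\,p_{n,j}(1/n)\,x$, and then invokes the Weierstrass approximation theorem on $[1/n,1]$ together with the vanishing of $f$ at $0$ to control the error on $(0,1/n)$; you instead take piecewise affine functions with rational breakpoints and rational nodal values pinned at $(0,0)$, and approximate directly via the uniform continuity of the continuous extension of $f$ to $[0,1]$ (with $f(0)=0$), splitting the error budget between interpolation and rationalization of the node values. Both constructions yield a countable subset of $\lip$ with the bound $\supnorm{\test_n}\le 2$; yours is more elementary and self-contained (no Weierstrass needed), makes the Lipschitz membership and the uniform bound completely transparent, and handles the behavior near $0$ inside the interpolation itself, while the paper's version is more compact to state, delegating the approximation to a standard theorem and using the truncation at $1/n$ plus linear extension to exploit the vanishing of $f$ at zero. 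The only points in your argument that require the care you already gave them are the reduction to $\ep\le 1$ (needed so that $\supnorm{g}\le 5/4\le 2$) and the observation that the affine interpolant attains its extrema at the nodes; both are handled correctly.
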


\begin{proof}
For each $n \in \naturals,$ choose $\{p_{n, j}\}_{j\in\naturals}$ an enumeration of the polynomials with rational coefficients restricted to $[1/n, 1].$
We extend
$p_{n, j}$ to $(0, 1]$ inside $\lip$ by linear interpolation
\begin{equation*}
\tilde{p}_{n,j} (x) =
\begin{cases}
p_{n, j}(x), & \text{ if } x\in [1/n, 1]\\
n \, p_{n, j}(1/n)\, x, & \text{ if } x\in (0, 1/n)
\end{cases}
\end{equation*}
and further
restrict to the $\tilde{p}_{n,j}$ such that $\supnorm{\tilde{p}_{n,j}} \leq 2.$
The fact that this family gives the desired sequence is a consequence of the Weierstrass' approximation theorem together with the definition of $f$ vanishing at zero.
\end{proof}

The next result is easy to check:

\begin{proposition}
\label{metrizability}
Let $B(0,1) = \{ \mu \colon \variation{\mu} \leq 1\} \subset \mathcal{M}_b(0,1]$
and let $\{\test_n\}_{n\in \naturals}\subset \mathrm{Lip}_0(0,1]$ be a sequence as in Lemma \ref{density}.
Then the topology generated by the norm
\begin{equation*}
\norm{\mu}_{\star} = \sum_{j = 1}^{\infty} \dfrac{\abs{\integrate{\mu}{\test_j}}}{2^j(1 + [\test_j]_{\normalfont\text{Lip}})},
\end{equation*}
restricted to $B(0,1)$ is equivalent to the weak-$\star$ topology in $B(0, 1).$
\end{proposition}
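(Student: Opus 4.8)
The plan is to exhibit the identity map on $B(0,1)$ as a homeomorphism between the two topologies by a routine metrization argument. First I would check that $\norm{\cdot}_{\star}$ is genuinely a norm on $\measure$: it is finite since $\abs{\integrate{\mu}{\test_j}} \leq 2\variation{\mu}$ forces $\norm{\mu}_{\star} \leq 2\variation{\mu}$, the triangle inequality and homogeneity are immediate, and it is positive definite because $\norm{\mu}_{\star} = 0$ gives $\integrate{\mu}{\test_j} = 0$ for every $j$, hence $\integrate{\mu}{f} = 0$ for all $f \in \vanishing$ by the approximation property of $\{\test_j\}$ in Lemma \ref{density} together with linearity, so $\mu = 0$ via the isometric identification $\measure \cong (\vanishing)'$. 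Next, since $\vanishing$ is separable, the weak-$\star$ topology is metrizable on the bounded set $B(0,1)$ (see \cite{bogachev2007measure}); as $\norm{\cdot}_{\star}$ also induces a metric on $B(0,1)$, it then suffices to show that a sequence $\{\mu_n\} \subset B(0,1)$ converges to $\mu \in B(0,1)$ in one topology if and only if it converges to $\mu$ in the other.

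For the implication weak-$\star$ $\Rightarrow$ $\norm{\cdot}_{\star}$, suppose $\mu_n \weak \mu$. Each summand $\abs{\integrate{\mu_n - \mu}{\test_j}}/(2^j(1 + \lipnorm{\test_j}))$ tends to $0$ as $n \to \infty$ and is dominated, uniformly in $n$, by $4 \cdot 2^{-j}$ because $\variation{\mu_n - \mu} \leq 2$ and $\supnorm{\test_j} \leq 2$. Splitting the series at a large index and using that each of the finitely many leading terms is eventually small, one concludes $\norm{\mu_n - \mu}_{\star} \to 0$.

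For the converse, assume $\norm{\mu_n - \mu}_{\star} \to 0$. Fixing $j$, the bound $\abs{\integrate{\mu_n - \mu}{\test_j}} \leq 2^j(1 + \lipnorm{\test_j}) \norm{\mu_n - \mu}_{\star}$ shows $\integrate{\mu_n}{\test_j} \to \integrate{\mu}{\test_j}$. Now let $f \in \vanishing$; we may assume $\supnorm{f} \leq 1$. Given $\ep > 0$, Lemma \ref{density} provides $k$ with $\supnorm{f - \test_k} < \ep$, so
\[\abs{\integrate{\mu_n - \mu}{f}} \leq \variation{\mu_n}\supnorm{f - \test_k} + \abs{\integrate{\mu_n - \mu}{\test_k}} + \variation{\mu}\supnorm{f - \test_k} \leq 2\ep + \abs{\integrate{\mu_n - \mu}{\test_k}},\]
and letting $n \to \infty$ and then $\ep \to 0$ gives $\integrate{\mu_n}{f} \to \integrate{\mu}{f}$. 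Hence $\mu_n \weak \mu$, and the two metrizable topologies, having the same convergent sequences on $B(0,1)$, coincide.

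I do not expect any serious obstacle here; the one point requiring care is that Lemma \ref{density} furnishes approximation of test functions in the \emph{supremum} norm, which is exactly the input needed for the second implication, whereas the first implication only uses pointwise-in-$j$ convergence together with the crude summable majorant $4\cdot 2^{-j}$. The only genuinely external ingredient is the metrizability of the weak-$\star$ topology on $B(0,1)$, so that identifying the convergent sequences is enough to identify the topologies; this is standard from the separability of $\vanishing$.
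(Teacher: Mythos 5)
Your argument is correct, and it supplies exactly the verification the paper omits: Proposition \ref{metrizability} is stated there with the remark that it ``is easy to check,'' so there is no written proof to compare against. All the ingredients you use are sound: the bound $\abs{\integrate{\mu}{\test_j}} \leq 2\variation{\mu}$ from $\supnorm{\test_j}\leq 2$, positive definiteness of $\norm{\cdot}_{\star}$ via the sup-norm density of $\{\test_j\}$ from Lemma \ref{density} and the identification $\measure \cong (\vanishing)'$, metrizability of the weak-$\star$ topology on the ball from separability of $\vanishing$ (which itself follows from Lemma \ref{density} after rational scaling), and the fact that two metrizable topologies with the same convergent sequences coincide. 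A marginally shorter route, likely the intended one, is to note that the identity from $(B(0,1),\text{weak-}\star)$ to $(B(0,1),\norm{\cdot}_{\star})$ is continuous (each summand is weak-$\star$ continuous and the series converges uniformly on $B(0,1)$), that $B(0,1)$ is weak-$\star$ compact by Banach--Alaoglu, and that the target is Hausdorff since $\norm{\cdot}_{\star}$ is a norm, so a continuous bijection from a compact space to a Hausdorff space is a homeomorphism; this avoids invoking metrizability of the weak-$\star$ topology altogether, but your sequence-based argument is equally valid.
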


By the above, we can extract a weakly convergent subsequence from the approximate solutions:

\begin{proposition}\label{convergences}
There exists $\mu \in C\big([0,\infty);\weakmeasure\big)$ and a subsequence $\approxsolsub$ such that
\begin{itemize}
\item $\approxsolsub_t \weak \mu_t$ as $k \to \infty$ uniformly in $t\in [0,T]$ for all $T > 0$
\item $\approxsolsub_t \bweak \mu_t$ as $k \to \infty$ for each $t \geq 0$
\end{itemize}
\end{proposition}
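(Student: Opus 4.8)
The plan is to run an Arzel\`a--Ascoli argument on each compact time interval and then to diagonalize over $T = 1, 2, \dots$, deducing the weak-$C_b$ statement at the very end from a tightness bound near the endpoint $0$. First I would fix $T > 0$ and use Proposition \ref{a_priori_estimates_fragmentation} to place every $\approxsol_t$, for $t \in [0, T]$ and $j \in \naturals$, inside a single ball $B(0, R_T) \subset \measure$. By the Banach--Alaoglu theorem and the separability of $\vanishing$ provided by Lemma \ref{density}, this ball is compact and metrizable in the weak-$\star$ topology, and rescaling the argument of Proposition \ref{metrizability} shows that a compatible metric is the one induced by $\norm{\cdot}_\star$. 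Since $\approxsol \in \solspaceinfpos$, the restriction of each $\approxsol$ to $[0, T]$ is then a continuous curve in the compact metric space $(B(0, R_T), \norm{\cdot}_\star)$.

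Next I would establish equicontinuity, uniformly in $j$. Inserting the functions $\test_n$ of Lemma \ref{density}, which satisfy $\supnorm{\test_n} \leq 2$, into Proposition \ref{equicontinuity_frag}, then dividing by $2^n(1 + \lipnorm{\test_n})$ and summing the resulting geometric series, one obtains
\[
\norm{\approxsol_t - \approxsol_s}_\star \leq C_T'\,\big(1 + \momentbound_{\max\{\beta,\gamma\}}\big)^2 \abs{t - s} \qquad (s, t \in [0, T], \ j \in \naturals),
\]
so the family $\{\approxsol\}_j$ is uniformly Lipschitz, hence equicontinuous, as curves in $(B(0, R_T), \norm{\cdot}_\star)$. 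By the Arzel\`a--Ascoli theorem it is relatively compact in $C\big([0, T]; (B(0, R_T), \norm{\cdot}_\star)\big)$. Applying this with $T = N$ for each $N \in \naturals$ and carrying out a diagonal extraction, I get one subsequence $\approxsolsub$ and a map $\mu$ such that $\approxsolsub \to \mu$ uniformly on $[0, N]$ in $\norm{\cdot}_\star$ for every $N$; equivalently $\approxsolsub_t \weak \mu_t$ uniformly on each $[0, T]$, and (being a uniform limit of continuous curves into a metric space) $\mu \in C\big([0, \infty); \weakmeasure\big)$. Testing against nonnegative $\test \in \vanishing$ shows $\mu_t \in \measurepos$ for every $t$.

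For the second bullet I would argue tightness near $0$. Fix $t \geq 0$ and $\ep > 0$, and pick $T > t$. By the second estimate of Proposition \ref{a_priori_estimates_fragmentation},
\[
\approxsolsub_t(0, \delta) \leq \initialvalue(0, \delta) + C_T(1 + \momentbound_\gamma)\,\delta \qquad \text{for all } k,
\]
and since $\initialvalue$ is a finite measure, $\initialvalue(0, \delta) \to 0$ as $\delta \to 0^+$; thus I may fix $\delta$ so that the right-hand side is $< \ep$, and lower semicontinuity of $\nu \mapsto \nu(0, \delta)$ under weak-$\star$ convergence of nonnegative measures gives $\mu_t(0, \delta) \leq \liminf_k \approxsolsub_t(0, \delta) < \ep$ as well. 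Given $\test \in \bounded$, write $\test = \test\,\psi_\delta + \test\,(1 - \psi_\delta)$ with $\psi_\delta \in \vanishing$, $0 \leq \psi_\delta \leq 1$, $\psi_\delta \equiv 1$ on $[\delta, 1]$ and $\psi_\delta \equiv 0$ on $(0, \delta/2]$; then $\test\,\psi_\delta \in \vanishing$ while $\abs{\test\,(1 - \psi_\delta)} \leq \supnorm{\test}\,\chi_{(0,\delta)}$, so
\[
\abs{\integrate{\approxsolsub_t - \mu_t}{\test}} \leq \abs{\integrate{\approxsolsub_t - \mu_t}{\test\,\psi_\delta}} + \supnorm{\test}\big(\approxsolsub_t(0, \delta) + \mu_t(0, \delta)\big).
\]
Letting $k \to \infty$ kills the first term (as $\test\,\psi_\delta \in \vanishing$), and the limit superior of the right-hand side is at most $2\ep\,\supnorm{\test}$; since $\ep$ was arbitrary, $\approxsolsub_t \bweak \mu_t$.

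I expect the genuinely delicate step to be this last one, the passage from weak-$\star$ to weak-$C_b$ convergence: it is the only place where the non-compactness of $(0, 1]$ at $0$ matters, and it forces one to use the quantitative control of mass near $0$ from Proposition \ref{a_priori_estimates_fragmentation} together with the finiteness of $\initialvalue$. The remainder is the routine Arzel\`a--Ascoli and diagonalization machinery, and the only bookkeeping worth double-checking is that the weak-$\star$ metrizability and compactness of balls (stated for $B(0,1)$ in Proposition \ref{metrizability}) transfer to $B(0, R_T)$ by scaling.
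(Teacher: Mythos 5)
Your proof is correct. For the first bullet it follows the paper's route essentially verbatim: the total–variation bound from Proposition \ref{a_priori_estimates_fragmentation} gives weak-$\star$ precompactness of the time slices, Proposition \ref{equicontinuity_frag} together with the metrization of Proposition \ref{metrizability} (rescaled to $B(0,R_T)$, as the paper also tacitly does) gives equicontinuity in $\norm{\cdot}_{\star}$, and Arzel\`a--Ascoli plus diagonal extraction over $T=N$ finishes; your explicit summation of the series to get a uniform Lipschitz bound in $\norm{\cdot}_{\star}$ is a nice way of making the equicontinuity concrete. For the second bullet you diverge from the paper: the paper fixes $t$, applies Prokhorov's theorem to an arbitrary subsequence using the tightness estimate of Proposition \ref{a_priori_estimates_fragmentation}, identifies the weak-$C_b$ limit with $\mu_t$ through the already known weak-$\star$ limit, and concludes by the arbitrariness of the subsequence; you instead prove the upgrade from weak-$\star$ to weak-$C_b$ by hand, combining the uniform bound $\approxsolsub_t(0,\delta)\leq \initialvalue(0,\delta)+C_T(1+\momentbound_\gamma)\delta$, the lower semicontinuity of $\nu\mapsto\nu(0,\delta)$ on nonnegative measures under weak-$\star$ convergence (valid since $(0,\delta)$ is open and one can exhaust it by $C_c$ cutoffs), and the decomposition $\test=\test\psi_\delta+\test(1-\psi_\delta)$. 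Both arguments hinge on the same tightness-at-zero estimate; the paper's is shorter once Prokhorov is taken off the shelf, while yours is more elementary and self-contained, avoids the subsequence-of-subsequences device by giving a direct quantitative estimate for the full sequence, and makes visible exactly where the finiteness of $\initialvalue$ enters. The only bookkeeping you should keep (and you did) is that the nonnegativity of $\mu_t$, obtained by testing the weak-$\star$ limit against nonnegative elements of $\vanishing$, is needed for the lower semicontinuity step.
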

\begin{proof}
By the first estimate in Proposition \ref{a_priori_estimates_fragmentation}, for each $t \geq 0$ the set $\{\approxsol_t\}_{j \in \naturals}$ is bounded with respect to the total variation norm and is therefore precompact in the weak-$\star$ topology.
Propositions \ref{equicontinuity_frag} and \ref{metrizability} show that the sequence is equicontinuous as a family of maps in the space $\weaksolspace.$
The first claim follows by iterative application of Arzelà-Ascoli to the sequence in $C\big([0,T_k]; \weakmeasure\big)$ for each $T_k = k \in \naturals$ and taking the diagonal sequence.
We do not relabel this convergent subsequence.

Let $t \geq 0.$
Given any subsequence $\{j_k\}_{k\in\naturals}$, the tightness estimate in Proposition \ref{a_priori_estimates_fragmentation} implies by Prokhorov's theorem that there exists a measure $\bar{\mu}_t\in \measure$ and
a further subsequence $\{j_{k_l}\}_{l\in\naturals}$
such that $\mu_t^{j_{k_l}} \bweak \bar{\mu}_t$ as $l \to \infty$.
In particular, $\mu_t^{j_{k_l}} \weak \bar{\mu}_t$ and by the uniqueness of the limit we actually have $\mu_t = \bar{\mu}_t.$
Since the subsequence was arbitrary, the second claim follows.
\end{proof}

\subsection{Passing to the limit}

For convenience we write $\approxsol$ for the convergent subsequence in Proposition \ref{convergences}.

\begin{lemma}
\label{zero_limits}
For every $t\in[0,T]$, $\test \in \lip$ we have
\begin{equation*}
\lim_{j \to \infty} \int_{(0,1]^2} \big( K(x,y) - \approxkernel(x, y)\big)\, \abs{\opa{\test}(x,y)} \dd \approxsol_t(x)\dd \approxsol_t(y) = 0,
\end{equation*}
\begin{equation*}
\lim_{j \to \infty} \int_{(0,1]} \big(\reacc_t(x) - \approxreacc_t(x)\big) \, \abs{\test(x)} \, \dd \mu^j_t(x) = 0.
\end{equation*}
\end{lemma}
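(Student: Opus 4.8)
The plan is to exploit the fact that $K$ and $\approxkernel$ differ only on the region where at least one variable is small, namely on $R_j := (0,1]^2 \setminus (1/j,1]^2$, and on this region the integrand is controlled by the uniform bound of Lemma \ref{uniform_bounds_operator} together with the tightness estimate of Proposition \ref{a_priori_estimates_fragmentation}. For the first limit, I would write
\[
\int_{(0,1]^2} (K - \approxkernel)\,\abs{\opa{\test}}\,\dd\approxsol_t\,\dd\approxsol_t = \int_{R_j} K(x,y)\,\abs{\opa{\test}(x,y)}\,\dd\approxsol_t(x)\,\dd\approxsol_t(y),
\]
since $K = \approxkernel$ off $R_j$ and $\approxkernel \le K$ everywhere. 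By Lemma \ref{uniform_bounds_operator}, $K(x,y)\abs{\opa{\test}(x,y)} \le 8\upperbound\lipnorm{\test}$ pointwise, so the right-hand side is bounded by $8\upperbound\lipnorm{\test}\,(\approxsol_t\otimes\approxsol_t)(R_j)$. Now $R_j \subset \big((0,1/j]\times(0,1]\big)\cup\big((0,1]\times(0,1/j]\big)$, so
\[
(\approxsol_t\otimes\approxsol_t)(R_j) \le 2\,\approxsol_t(0,1/j)\,\variation{\approxsol_t}.
\]
By Proposition \ref{a_priori_estimates_fragmentation}, $\variation{\approxsol_t}$ is bounded uniformly in $j$ and $t\in[0,T]$, and $\approxsol_t(0,1/j) \le \initialvalue(0,1/j) + C_T(1+\momentbound_\gamma)/j$. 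Since $\initialvalue$ is a finite measure, $\initialvalue(0,1/j)\to 0$ as $j\to\infty$, and the second term also vanishes; hence the whole expression tends to $0$. (Note this bound is uniform in $t\in[0,T]$, which is slightly stronger than what is claimed.)

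For the second limit, the same idea applies even more directly. On $(0,1]$ we have $\reacc_t(x) - \approxreacc_t(x) = \int_1^\infty (K - \approxkernel)(x,y)\,g(t,y)\,\dd y$, and since $K = \approxkernel$ for $x > 1/j$ (the constraint $y > 1/j$ being automatic when $y > 1$), this difference is supported in $x \in (0,1/j]$. Moreover $\reacc_t(x) - \approxreacc_t(x) \le \reacc_t(x)$, and by Lemma \ref{uniform_bounds_operator}, $\reacc_t(x)\abs{\test(x)} \le 4\upperbound\massbdy_\beta(t)\lipnorm{\test}$. Therefore
\[
\int_{(0,1]} \big(\reacc_t(x) - \approxreacc_t(x)\big)\abs{\test(x)}\,\dd\approxsol_t(x) \le 4\upperbound\massbdy_\beta(t)\lipnorm{\test}\,\approxsol_t(0,1/j),
\]
which again tends to $0$ as $j\to\infty$ by the tightness estimate, uniformly in $t\in[0,T]$.

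The only point requiring a little care is making sure the decomposition $\{K \ne \approxkernel\} = R_j$ is exactly right and that $\approxkernel \le K$, so that the differences are nonnegative and one can drop absolute values cleanly; this is immediate from $\approxkernel = K\cdot\chi_{\{x,y>1/j\}}$. I do not anticipate a genuine obstacle here: the substance is entirely the uniform tightness of the approximating solutions established in Proposition \ref{a_priori_estimates_fragmentation}, combined with the pointwise bounds of Lemma \ref{uniform_bounds_operator}, and the finiteness of $\initialvalue$ as a measure on $(0,1]$.
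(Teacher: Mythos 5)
Your proposal is correct and follows essentially the same route as the paper: both reduce the difference to the region where at least one variable lies in $(0,1/j]$, apply the uniform pointwise bounds of Lemma \ref{uniform_bounds_operator}, and conclude via the tightness estimate $\approxsol_t(0,1/j) \leq \initialvalue(0,1/j) + C_T(1+\momentbound_{\gamma})/j \to 0$ from Proposition \ref{a_priori_estimates_fragmentation}. The only cosmetic difference is that the paper splits $(0,1/j]^2$ and $(0,1/j]\times(1/j,1]$ separately where you bound $(\approxsol_t\otimes\approxsol_t)(R_j)$ in one stroke, which changes nothing of substance.
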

\begin{proof}

By Lemma \ref{uniform_bounds_operator}
\[
\int_{(0,1/j]} \int_{(0, 1/j]}
K(x,y)\big|\opa{\test}(x,y)\big| \dd \approxsol_t(x)\dd \approxsol_t(y) \leq C \lipnorm{\test}
\approxsol_t(0, 1/j)^2,\]
\[
\int_{(0,1/j]} \int_{(1/j, 1]}
K(x,y)\abs{\opa{\test}(x,y)} \dd \approxsol_t(x)\dd \approxsol_t(y) \leq C \lipnorm{\test}
\approxsol_t(0, 1/j),\]
\[\int_{(0,1/j]} \reacc_t(x) \, \abs{\test(x)} \dd \approxsol_t(x) \leq C \momentbound_{\beta} \lipnorm{\test} \,
\approxsol_t(0, 1/j),\]
and by Proposition \ref{a_priori_estimates_fragmentation} we have $\approxsol_t(0, 1/j) \to 0$ as $j \to \infty$ since $\initialvalue \in \measurepos.$
\end{proof}

The results above allow us now to take the limit inside the equation.

\begin{proof}[Proof of Theorem \ref{existence_result}]
We claim that the measure $\mu$ constructed in Proposition \ref{convergences} is a solution of the boundary valued coagulation-fragmentation equation.

Let $\test \in \lip.$
Weak-$\star$ convergence implies
$\integrate{\approxsol_t}{\opb{\test}} \to \integrate{\mu_t}{\opb{\test}}$
uniformly in $t \in [0, T]$ for $T > 0,$ whence
\begin{equation*}
\int_{0}^{t} \integrate{\mu^j_s}{\opb{\test}} \dd s \to \int_{0}^{t} \integrate{\mu_s}{\opb{\test}} \dd s
\end{equation*}
for each $t \geq 0.$

We claim that for each $t \geq 0$ we have $\approxsol_t \otimes \approxsol_t \bweak \mu_t \otimes \mu_t$.
Indeed, by the \emph{a priori} estimates in Proposition \ref{a_priori_estimates_fragmentation} the sequence $\{\approxsol_t \otimes \approxsol_t\}_{j\in\naturals}$ is bounded in $\measureprod$ and tight.
By Prokhorov's theorem there is a further subsequence (which we do not relabel) and a measure $\lambda_t \in \mathcal{M}^{+}_b\big((0,1]^2\big)$ such that $\mu_t^j\otimes \mu_t^j \bweak \lambda_t$.
The subspace $\vanishing \otimes \vanishing \subset C_0\big((0,1]^2\big)$
of finite sums of products of functions
is uniformly dense by the Stone-Weierstrass theorem for locally compact spaces.
Moreover, for $\psi = \sum_{i = 1}^{N} a_i \otimes b_i$ with $a_i, b_i \in \vanishing$
the weak-$\star$ convergence implies that
\begin{equation*}
\integrate{\approxsol_t \otimes \approxsol_t}{\psi} = \sum_{i = 0}^{N} \integrate{\approxsol_t}{a_i} \integrate{\approxsol_t}{b_i} \to \sum_{i = 0}^{N} \integrate{\mu_t}{a_i} \integrate{\mu_t}{b_i} = \integrate{\mu_t\otimes \mu_t}{\psi}
\end{equation*}
as $j \to \infty,$ hence $\lambda_t$ and $\mu_t \otimes \mu_t$ coincide when tested against functions in $\vanishing \otimes \vanishing.$
By density they are also equal as elements of $\big(C_0\left((0,1]^2\right)\big)'$ and hence as Borel measures.

Lemma \ref{uniform_bounds_operator} shows that $\reacc_t \, \test \in \bounded$ and $K \opa{\test} \in C_b\big((0,1]\times(0,1]\big)$.
By the weak-$C_b$ convergence and Lemma \ref{zero_limits}
\begin{equation*}
\integrate{\approxsol_t}{\approxreacc_t \test} \to \integrate{\mu_t}{\reacc_t \test}, \quad \integrate{\approxsol_t \otimes \approxsol_t}{K_j \opa{\test}} \to \integrate{\mu_t \otimes \mu_t}{K \opa{\test}}
\end{equation*}
as $j \to \infty.$
Lemma \ref{uniform_bounds_operator} and Proposition \ref{a_priori_estimates_fragmentation} show that
\[|\integrate{\approxsol_t}{\approxreacc_t \test}| \leq \variation{\approxsol_t} \norm{\reacc_t \test}_{\infty} \leq C_T (1 + \momentbound_{\gamma})\momentbound_{\beta} \lipnorm{\test}\]
\[|\integrate{\approxsol_t \otimes \approxsol_t}{K_j \opa{\test}}| \leq \variation{\approxsol_t}^2 \norm{K \opa{\test}}_{\infty} \leq C_T(1 + \momentbound_{\gamma})^2 \lipnorm{\test}\]
independently of $j$ and by dominated convergence we may pass to the limit inside the integrals with respect to $\dd t,$ that is,
\[\int_{0}^{t} \integrate{\approxsol_s}{\approxreacc_s \test} \dd s \to \int_{0}^{t} \integrate{\mu_s}{\reacc_s \test} \dd s,
\]
\[ \int_{0}^{t} \integrate{\mu^j_s \otimes \mu^j_s}{ k_j \opa{\test}} \dd s \to \int_{0}^{t} \integrate{\mu_s \otimes \mu_s}{ k \opa{\test}} \dd s\]
for every $t \geq 0.$
\end{proof}
\begin{remark}
Notice that, by the above, $\mu$ is \emph{a priori} only in $C\big([0,\infty);\weakmeasure\big).$
In the proof of Theorem \ref{improvement_result} below we will see that it is actually in the space $\solspaceinfpos$ as required by our definition of solution (recall Definition \ref{definition_of_solution}).
\end{remark}

\subsection{Improvements}

With the additional assumption that we have lower bounds for the coagulation kernel $K,$ we obtain an upgraded version of \eqref{a_solution}, as stated in Theorem \ref{improvement_result}.

\begin{proof}[Proof of Theorem \ref{improvement_result}]
For $a \geq 0$ and $\delta > 0$ we let $\test_{a, \delta} \in \lip$ be piecewise defined by
\[\test_{a, \delta}(x) =
\begin{cases}
\delta^{-(1 + a)} x, & \text{ if } x < \delta,\\
x^{-a}, &\text{ if } x \geq \delta.
\end{cases}
\]
It is easy to check that $\opa{\test_{a, \delta}}(x, y) \leq 0$ and
writing $\test_{a, 0}(x) = x^{-a}$ we have
\begin{equation*}
\begin{split}
\test_{a, \delta}(x) \nearrow \test_{a, 0}(x), \quad -\opa{\test_{a, \delta}}(x, y) \nearrow -\opa{\test_{a, 0}}(x, y)
\end{split}
\end{equation*}
as $\varepsilon \to 0^{+}$ for $x, y \in (0,1].$
Moreover, if $\lambda < 1$
\begin{equation*}
\begin{split}
& -\opb{\test_{\lambda, 0}}(x) = \dfrac{1}{2}\int_{0}^{x} F(x - y, y) \Big((x - y)^{-\lambda} +  y^{-\lambda} - x^{-\lambda} \Big) \dd y \leq \\
& \leq \int_{0}^{x} F(x - y, y) y^{-\lambda} \dd y \leq 2 \upfrag \int_{0}^{x} y^{-\lambda} \dd y = \dfrac{2 \upfrag}{1 - \lambda} x^{1-\lambda},
\end{split}
\end{equation*}
and the estimates in Lemma \ref{uniform_bounds_operator} show that
\begin{equation*}
\begin{split}
& \integrate{\lebesgue}{\coreacc_t \test_{\lambda, 0}} \leq 2 \upfrag \massbdy_{\gamma}(t) \int_{0}^{1} x^{-\lambda} \dd x \leq \dfrac{2 \upfrag \momentbound_{\gamma}}{1 - \lambda}.
\end{split}
\end{equation*}

Plugging $\test_{0, \delta}$ in \eqref{a_solution} and letting $\delta \to 0$
\begin{equation*}
\begin{split}
&\variation{\mu_t} + \int_{0}^{t} \integrate{\mu_s}{\reacc_s} \dd s =\\
&\quad = \variation{\initialvalue} + \int_{0}^{t} \Big( \dfrac{1}{2}\integrate{\mu_s \otimes \mu_s}{\opa{\test_{0, 0}}} - \integrate{\mu_s}{\opb{\test_{0, 0}}} + \integrate{\lebesgue}{\coreacc_s \test_{0, 0}} \Big) \dd s \\
&\quad \leq \variation{\initialvalue} + C_T(1 + \momentbound_{\gamma}) < \infty
\end{split}
\end{equation*}
for $t \in [0, T].$
Since the lower bound \eqref{lower_bound_kernel} implies
\[\reacc(t, x) \geq \lowerbound \massbdy_{\beta}(t) x^{-\alpha}\]
we deduce that $\mass_{-\alpha}(t) < \infty$ for a.e. $t \geq 0.$

Let $\lambda \in (0, 1) \cap (0,\alpha]$ and let $t_0 > 0$ be such that $\mass_{-\lambda}(t_0) < \infty.$
For $t \in [0, T]$ with $t > t_0,$ if we substract the coagulation fragmentation equation at times $t$ and $t_0$, test with $\test_{\lambda, \delta}$ and then let $\delta \to 0$ we get
\begin{equation*}
\begin{split}
&\mass_{-\lambda}(t) + \lowerbound \int_{t_0}^{t} \mass_{-(\alpha + \lambda)}(s) \massbdy_{\beta}(s) \dd s \leq\\
&\quad\leq \mass_{-\lambda}(t_0) +  \int_{t_0}^{t} \Big( \integrate{\mu_s \otimes \mu_s}{\opa{\test_{\lambda, 0}}} - \integrate{\mu_s}{\opb{\test_{\lambda, 0}}} + \integrate{\lebesgue}{\coreacc_s \test_{\lambda, 0}} \Big) \dd s\\
&\quad \leq \mass_{-\lambda}(t_0) +  \int_{t_0}^{t} \Big( \dfrac{2 \upfrag}{1 - \lambda} \variation{\mu_s} + \dfrac{2 \upfrag \momentbound_\gamma}{1 - \lambda}\Big) \dd s\\
& \quad \leq \mass_{-\lambda}(t_0) + \dfrac{C_T}{1 - \gamma}(1 + \momentbound_{\gamma}) < \infty
\end{split}
\end{equation*}
where in the second inequality we have used that $\opa{\test_{\lambda, 0}} \leq 0,$ together with
the estimates at the beginning of this proof, and in the last inequality we have invoked the \emph{a priori} estimate in Proposition \ref{a_priori_estimates_fragmentation}.

It follows that $\mass_{-\lambda}(t) < \infty$ for $t > 0,$ $\mass_{-(\alpha + \lambda)} \massbdy_{\beta} \in L^{1}_{\text{loc}}[0, \infty)$ and in particular, $\mass_{-(\alpha + \lambda)}(t) < \infty$ for a.e. $t > 0.$
If $\alpha + \lambda < 1,$ applying the same reasoning to $\test_{(\alpha + \lambda), \delta}$ shows that $\mass_{-(\alpha + \lambda)}(t) < \infty$ for $t > 0$ and $\mass_{-(2\alpha + \lambda)}\massbdy_{\beta} \in L^{1}_{\text{loc}}[0, \infty).$
By iteration we obtain the first claim and moreover we have $\mass_{-(1 + \alpha - \varepsilon)}\in L^{1}_{\text{loc}}[0, \infty)$ for all $\varepsilon > 0.$
In particular, the \emph{a priori} singular terms in the equation define bounded measures as we explain next.

For $t \in [0, T],$ by \eqref{upper_bound_singular_kernel} and Proposition \ref{a_priori_estimates_fragmentation} we have
\[\begin{split}
&\abs{\integrate{\mu_t \otimes \mu_t}{K \opa{\test}}} \leq 3 \abs{\integrate{\mu_t \otimes \mu_t}{K}} \supnorm{\test} \leq 6 \upperbound \mass_{-\alpha}(t) \variation{\mu_t} \supnorm{\test} \leq \\
& \leq C_T(1 + \momentbound_{\gamma}) \mass_{-\alpha}(t) \supnorm{\test}
\end{split}\]
as well as
\[
\abs{ \integrate{\mu_t}{\reacc_t \test} } \leq 4 \upperbound \mass_{-\alpha}(t) \massbdy_{\beta}(t) \supnorm{\test} \leq 4 \upperbound \momentbound_{\beta} \mass_{-\alpha}(t)\supnorm{\test}
\]
where the moments $\mass_{-\alpha}$ are integrable in $[0, T]$ as explained in the paragraph above.

Writing
$\integrate{\int_{0}^{t}\opbstar{\mu_s} \dd s}{\test} = \int_{0}^{t} \integrate{\mu_s}{\opb{\test}} \dd s$ and so on,
by the arbitrariness of $\test \in \lip$ it follows that the total variations of the integrands are integrable in $[0, T]$ and for each $t \geq 0$ the measure
\[\int_{0}^{t}  \Big(\dfrac{1}{2}\opastar{K (\mu_s \otimes \mu_s)} - \reacc_s \mu_s - \opbstar{\mu_s} + \coreacc_s \lebesgue \Big) \dd s\]
is well defined as a Bochner integral with values in $\measure.$
We then have
\begin{equation}\label{measure_equality}
\mu_t = \initialvalue  + \int_{0}^{t}  \Big(\dfrac{1}{2}\opastar{K (\mu_s \otimes \mu_s)} - \reacc_s \mu_s - \opbstar{\mu_s} + \coreacc_s \lebesgue \Big) \dd s
\end{equation}
at the level of testing with functions in $\lip.$
We can further take test functions in $\vanishing$ as every measure appearing in \eqref{measure_equality} is bounded by the above.
This implies that \eqref{measure_equality} is actually an equality of elements in $\measure,$ and it must hold for even more general bounded functions in $\bddborel.$
Moreover, the identity implies that the solution $\mu$ is actually in $\solspaceinfpos$, as it is absolutely continuous.
\end{proof}

\section{Large time asymptotic behavior} \label{large_time_behavior}

\subsection{The case of coagulation}

In this subsection, we look at the particular case of a coagulating system without any fragmentation, i.e. $F = 0.$

\begin{proof}[Proof of Theorem \ref{moment_asymptotics_result}]
We will obtain moment estimates by applying the same reasoning as in the proof of Theorem \ref{improvement_result}.

For $\delta \geq 0,$ let $\test_{\gamma, \delta} \in \lip$ be piecewise defined as in the proof of Theorem \ref{improvement_result}.
Plugging $\test_{0, \delta}$ in equation \eqref{a_solution}, letting $\delta \to 0$ and by \eqref{lower_bound_kernel}
\begin{equation*}
\begin{split}
&\variation{\mu_t} + \lowerbound \int_{0}^{t} \mass_{-\alpha}(s) \massbdy_{\beta}(s) \dd s \leq\integrate{\mu_t}{\test_{0, 0}} + \int_{0}^{t} \integrate{\mu_s}{\reacc_s \test_{0, 0}} \dd s =\\
 &\quad = \integrate{\initialvalue}{\test_{0,0}} + \int_{0}^{t} \dfrac{1}{2}\integrate{\mu_s \otimes \mu_s}{ K \opa{\test_{0, 0}}} \dd s \leq \variation{\initialvalue},
\end{split}
\end{equation*}
whence $\mass_{-\alpha}\massbdy_{\beta} \in L^1[0, \infty).$
Proceeding as in the proof of Theorem \ref{improvement_result}, we deduce that actually $\mass_{-\alpha}(t) < \infty$ for $t > 0.$

Fix $t_0 > 0.$
Testing with $\test_{\alpha, \delta}$ and letting $\delta \to 0$
\begin{equation*}
\mass_{-\alpha}(t) + \lowerbound \int_{t_0}^{t} \mass_{-2\alpha}(s) \massbdy_{\beta}(s) \dd s \leq \mass_{-\alpha}(t_0)
\end{equation*}
whence $\mass_{-2\alpha} \massbdy \in L^1[0, \infty).$
Testing iteratively with $\test_{k\alpha, \delta},$ $k \in \naturals$ shows that $\mass_{-\lambda} \massbdy_{\beta} \in L^1[0, \infty)$ and $\mass_{-\lambda}(t) < \infty$ for all $t> 0,$ for every $\lambda \geq 0.$

It is then easy to show that $\mass_{-\alpha}$ is absolutely continuous and differentiable in $(0, \infty)$ with
\begin{equation*}
\mass_{-\alpha}'(t) + \integrate{\mu_t}{\reacc_t \test_{\alpha,0}} = \dfrac{1}{2}\integrate{\mu_t \otimes \mu_t}{ K \opa{\test_{\alpha, 0}}}.
\end{equation*}
Now since $\mass_{-\alpha}(t) \leq \mass_{-2\alpha}(t)$ and by the lower bound \eqref{lower_bound_kernel} we have
\[\begin{split}
&\mass_{-\alpha}'(t) + \lowerbound \mass_{-\alpha}(t) \massbdy_{\beta}(t) \leq \mass_{-\alpha}'(t) + \lowerbound \mass_{-2\alpha}(t) \massbdy_{\beta}(t) \leq\\
&\quad \leq \mass_{-\alpha}'(t) + \integrate{\mu_t}{\reacc_t \test_{\alpha,0}} = \dfrac{1}{2}\integrate{\mu_t \otimes \mu_t}{ K \opa{\test_{\alpha, 0}}} \leq 0
\end{split}\]
whence
\begin{equation*}
\mass_{-\alpha}(t) \leq \mass_{-\alpha}(t_0) \exp{\Big(-\lowerbound \int_{t_0}^{t}\massbdy(s) \dd s\Big)}
\end{equation*}
for $t \geq t_0 > 0.$

Testing inductively with $\test_{n\alpha, \delta}$ where $n \in \naturals$ and letting $\delta \to 0$ as above we deduce
\begin{equation*}
\mass_{-n\alpha}(t) \leq \mass_{-n\alpha}(t_0) \exp{\Big(-\lowerbound \int_{t_0}^{t}\massbdy(s) \dd s\Big)}
\end{equation*}
for $t \geq t_0 > 0$ and the claim follows.
\end{proof}

\subsection{Fragmentation and detailed balance}

Using a trick from \cite{stewart1989}, we can show that solutions of the coagulation fragmentation equation inherit the continuity of the initial datum when this one has a density with respect to the Lebesgue measure:

\begin{proposition}\label{ac_sols_fragmentation}
Under the hypotheses in Theorem \eqref{existence_result}, if $\initialvalue \ll \lebesgue$ then we also have $\mu_t\mres(0,1) \ll \lebesgue$ for all $t > 0$.
\end{proposition}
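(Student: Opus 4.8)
The plan is to run a Gr\"onwall argument on the mass of the singular part of $\mu_t$, which is the measure-theoretic form of the trick from \cite{stewart1989}: each of the three gain contributions in the equation either annihilates Lebesgue-null sets or is dominated by the singular part of $\mu_s$ itself, while all the loss contributions carry a favourable sign and may simply be discarded. I would first treat the case of a \emph{bounded} coagulation kernel, where $\kbound=\sup_{(0,1]^2}K<\infty$ and, by Proposition \ref{wp_bdd_frag}, $\mu$ satisfies the equation tested against every $\test\in\bddborel$; the singular case is then obtained by approximation.

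Fix a Borel set $N\subset(0,1)$ with $\lebesgue(N)=0$ and plug $\test=\chi_N$ into the equation for $\mu$. Since $N\subset(0,1)$ one has $\overline{\chi_N}(x+y)=\chi_N(x+y)$ for $x,y\in(0,1]$, hence $\opa{\chi_N}(x,y)\le\chi_N(x+y)$; moreover $\reacc_s\chi_N\ge0$, and by the symmetry of $F$ one has $-\opb{\chi_N}(x)\le\int_0^x F(x-y,y)\chi_N(y)\,\dd y$. Discarding the non-positive loss terms and using that $\initialvalue(N)=0$, that $\int_N\coreacc_s(x)\,\dd x=0$ and, by Tonelli, that $\int\big(\int_0^x F(x-y,y)\chi_N(y)\,\dd y\big)\,\dd\mu_s(x)=\int_N\big(\int_{(y,1]}F(x-y,y)\,\dd\mu_s(x)\big)\,\dd y=0$, one is left with
\begin{equation*}
\mu_t(N)\le\frac12\int_0^t\integrate{\mu_s\otimes\mu_s}{K\,\chi_N(x+y)}\,\dd s\le\frac{\kbound}{2}\int_0^t(\mu_s\otimes\mu_s)\big(\{(x,y):x+y\in N\}\big)\,\dd s.
\end{equation*}

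Now I would exploit the Lebesgue decomposition $\mu_s=a_s+\sigma_s$ with $a_s\ll\lebesgue$ and $\sigma_s\perp\lebesgue$. The measures $a_s\otimes a_s$, $a_s\otimes\sigma_s$ and $\sigma_s\otimes a_s$ all give mass zero to $\{x+y\in N\}$, because every section $\{y:x+y\in N\}=N-x$ is $\lebesgue$-null; and in $\sigma_s\otimes\sigma_s$ the atom of $\sigma_s$ at the point $1$ cannot contribute, since $x+y\in N\subset(0,1)$ forces $x,y<1$. Hence $(\mu_s\otimes\mu_s)(\{x+y\in N\})\le\sigma(s)^2$, where $\sigma(s)$ denotes the total mass of the singular part of $\mu_s\mres(0,1)$. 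As $\mu_t(N)=\sigma_t(N)$ for every such $N$, taking the supremum over $N$ gives $\sigma(t)\le\frac{\kbound}{2}\int_0^t\sigma(s)^2\,\dd s$; since $\sigma(s)\le\variation{\mu_s}$ is bounded on $[0,T]$ and $\sigma(0)=0$ (because $\initialvalue\ll\lebesgue$), Gr\"onwall's inequality forces $\sigma\equiv0$ on every $[0,T]$. Thus $\mu_t\mres(0,1)\ll\lebesgue$ for all $t>0$ whenever $K$ is a bounded coagulation kernel.

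For a general singular kernel $K$ I would approximate it by the bounded kernels $\approxkernel$ of Section \ref{existence}: the previous step shows that each $\approxsol$ satisfies $\approxsol_t\mres(0,1)\ll\lebesgue$, with densities bounded in $L^1(0,1)$ uniformly in $j$ and $t\in[0,T]$ by Proposition \ref{a_priori_estimates_fragmentation}, and one then passes to the limit along the subsequence of Proposition \ref{convergences}. The subtle point, which I expect to be the main technical obstacle, is that absolute continuity is not stable under weak-$\star$ limits, so one must upgrade the $L^1$-boundedness of the densities of $\approxsol_t\mres(0,1)$ to equi-integrability, uniformly in $j$; combined with the lower semicontinuity $\mu_t(U)\le\liminf_j\approxsol_t(U)$ for open $U\subset(0,1)$, this yields $\mu_t(N)=0$ for every $\lebesgue$-null $N\subset(0,1)$. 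One route to this equi-integrability is a de la Vall\'ee--Poussin criterion applied to the density of the coagulation gain term, starting from the uniform bound $\int_0^T\integrate{\approxsol_s\otimes\approxsol_s}{\approxkernel}\,\dd s\le C_T$ obtained, exactly as in the a priori estimates, by testing with $\chi_{(0,1]}$.
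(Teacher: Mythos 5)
Your bounded-kernel step is correct and genuinely different from the paper's argument: instead of Stewart's translated-interval estimate, you test with $\chi_N$ (legitimate by Proposition \ref{wp_bdd_frag}), discard the loss terms, annihilate the fragmentation and boundary gains on the Lebesgue-null set, and close a Gr\"onwall inequality for the singular mass $\sigma(t)$ via the Lebesgue decomposition of $\mu_s\otimes\mu_s$ composed with the addition map. (Two small points you should record: $s\mapsto\sigma(s)$ is measurable, being a supremum of the TV-continuous functions $s\mapsto\mu_s(N)$, hence lower semicontinuous; and since $\sigma(s)\le\variation{\mu_s}\le C_T$ you should linearize $\sigma(s)^2\le C_T\,\sigma(s)$ before applying Gr\"onwall.)

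The genuine gap is the passage to singular kernels, which is where the proposition actually lives, and your route to uniform-in-$j$ equi-integrability does not work as described. The bound $\int_0^T\integrate{\approxsol_s\otimes\approxsol_s}{\approxkernel}\dd s\le C_T$ is only an $L^1$ space-time bound on the gain term, and the de la Vall\'ee--Poussin criterion for a \emph{family} requires one superlinear convex $\Phi$ with $\sup_{j}\int\Phi(\cdot)\dd x<\infty$, which is \emph{equivalent} to the equi-integrability you are trying to establish; applying the criterion to each density separately only yields a $j$-dependent $\Phi$, so the argument is circular. To genuinely propagate a superlinear moment $\int\Phi(f^j(t,x))\dd x$ one needs a Gr\"onwall argument exploiting the convexity structure of the coagulation gain, and for kernels satisfying \eqref{upper_bound_singular_kernel} this in turn demands uniform-in-$j$ control of negative moments such as $\mass_{-\alpha}$, which you do not provide. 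This quantitative uniform control is exactly what the paper's proof produces: testing with $\chifunction{a}$ and Gr\"onwalling $\sizefunction(t)=\sup_{a}\mu_t(I_{a,\delta})$, using that the coagulation gain redistributes mass into translated intervals of the same length $\delta$ and is bounded by $2\upperbound\mass_{-\alpha}(s)\sizefunction(s)$, it obtains $\sizefunction(t)\le\big(\widetilde\varepsilon+C_T(1+\momentbound_\gamma)\delta\big)\exp\big(2\upperbound\int_0^T\mass_{-\alpha}(s)\dd s\big)$, i.e.\ a modulus of absolute continuity that is stable and is applied directly to $\mu$ (with indicator test functions and $\mass_{-\alpha}\in L^{1}_{\text{loc}}$ available through Theorem \ref{improvement_result}). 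Your binary null-set bookkeeping carries no such modulus, so it cannot survive the weak-$\star$ limit; to complete your proof you would have to rerun your decomposition idea at the interval level (essentially reproducing the paper's estimate) or prove an actual propagation-of-equi-integrability lemma in the spirit of \cite{laurenccot2002continuous}.
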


\begin{proof}
For $\delta > 0,$ $a \in \real$ let
\begin{equation*}
I_{a, \delta} = (a, a + \delta)\cap (0, 1), \quad \sizefunction(t) = \sup_{a \in \real} \mu_t(I_{a, \delta}).
\end{equation*}
Fix $\varepsilon > 0$ and let $\widetilde{\varepsilon} = \frac{1}{2} \varepsilon \exp{\big(-2\upperbound \int_{0}^{T} \mass_{-\alpha}(t) \dd t\big)}.$
Since $\initialvalue \ll \lebesgue,$ there exists $\delta > 0$ such that $\initialvalue(I_{a, \delta}) < \widetilde{\varepsilon}$ for all $a \in \real$.
We have for $x \in (0,1]$
\begin{equation*}
\begin{split}
-\mathcal{B}[\chifunction{a}](x) & = \dfrac{1}{2}\int_{0}^{x} F(x - y, y) \big(\chifunction{a}(y) + \chifunction{a}(x - y) - \chifunction{a}(x)\big) \dd y \\
& \leq  2 \upfrag \int_{0}^{x} \chifunction{a}(y) \dd y \leq 2 \upfrag \delta
\end{split}
\end{equation*}

Testing with $\test = \chifunction{a},$ for $t \in [0, T]$ we have
\begin{equation*}
\begin{split}
&\mu_t(I_{a,\delta}) + \int_{0}^{t} \int_{I_{a,\delta}} \big( \reacc_s(x) +
\integrate{\mu_s}{ K(x, \cdot)}\big) \dd \mu_s(x) \dd s = \\
& = \initialvalue(I_{a,\delta}) + \int_{0}^{t} \dfrac{1}{2}\int_{(0,1]^2} K(x, y) \chifunction{a}(x + y) \dd (\mu_s \otimes \mu_s) \dd s + \\
 & \quad + \int_{0}^{t} \Big( - \integrate{\mu_s}{\mathcal{B}[\chifunction{a}]} + \integrate{\lebesgue}{\coreacc_s  \chifunction{a}} \Big) \dd s \\
&\leq \initialvalue(I_{a,\delta}) + 2 \upperbound \int_{0}^{t} \mass_{-\alpha}(s) \sizefunction(s) \dd s + 2 \upfrag \, \delta \, \int_{0}^{t} \variation{\mu_s} \dd s + 2 \upfrag \, \delta \, \int_{0}^{t} \massbdy_{\gamma}(s) \dd s \\
&\leq \widetilde{\varepsilon} + 2 \upperbound \int_{0}^{t} \mass_{-\alpha}(s) \sizefunction(s) \dd s + C_T(1 + \momentbound_{\gamma}) \delta.
\end{split}
\end{equation*}
Ignoring the nonnegative terms in the left hand side, taking the supremum in $a \in \real$
\begin{equation*}
\sizefunction(t) \leq 2 \upperbound \int_{0}^{t} \mass_{-\alpha}(s) \sizefunction(s) \dd s + \widetilde{\varepsilon} + C_T(1 + \momentbound_{\gamma}) \delta,
\end{equation*}
and by Grönwall's inequality
\begin{equation*}
\sizefunction(t) \leq \Big(\widetilde{\varepsilon} + C_T(1 + \momentbound_{\gamma}) \delta \Big) \exp{\bigg( 2 \upperbound \int_{0}^{T}\mass_{-\alpha}(t) \dd t \bigg)}.
\end{equation*}
Choosing $\delta > 0$ small enough we find that $\sizefunction(t) < \varepsilon$
for $t \in [0, T],$ which implies $\mu_t \mres (0,1) \ll \lebesgue$ for all $t \geq 0.$
\end{proof}

If $\dd \initialvalue(x) = \initialvalueac(x) \dd x,$ we write $f(t, x)$ for the density with respect to Lebesgue measure of $\mu_t \mres(0, 1)$

\begin{corollary}\label{the_corollary}
There exists an absolutely continuous map $f \colon [0, \infty) \to L^{1}(0, 1)$
with a.e. derivative $\partial_t f \in L^1_{\text{loc}}\big([0, \infty); L^1(0,1)\big)$ such that
\begin{equation*}
\begin{split}
\partial_t f(t, x) &= \frac{1}{2} \int_{0}^{x} K(x - y, y) f(t, x-y) f(t, y)\dd y - \int_{0}^{1} K(x, y) f(t, x) f(t, y)\dd y \\
&\quad- \int_{1}^{\infty} K(x, y) f(t, x) g(t, y)\dd y + \int_{1 - x}^{\infty} F(x, y) g(t, x + y)\dd y\\
&\quad-\frac{1}{2} \int_{0}^{x} F(x - y, y) f(t, x)\dd y + \int_{0}^{1 - x} F(x, y) f(t, x + y)\dd y,
\end{split}
\end{equation*}
for a.e. $t > 0,$ $x \in (0, 1).$
\end{corollary}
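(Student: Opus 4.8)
The plan is to deduce the strong equation from the measure identity \eqref{measure_equality} by restricting everything to the open interval $(0,1)$ and computing the density of each term there. Proposition \ref{ac_sols_fragmentation} gives $\mu_t\mres(0,1)\ll\lebesgue$ for every $t\geq0$, so we write $\dd\big(\mu_t\mres(0,1)\big)(x)=f(t,x)\,\dd x$ with $0\leq f(t,\cdot)\in L^1(0,1)$, and note $\mu_0\mres(0,1)=\initialvalueac\lebesgue$ since $\initialvalue\ll\lebesgue$. By Theorem \ref{improvement_result} the identity \eqref{measure_equality} holds in $\measure$ and its integrand
\[\nu_s\;:=\;\tfrac12\opastar{K(\mu_s\otimes\mu_s)}-\reacc_s\mu_s-\opbstar{\mu_s}+\coreacc_s\lebesgue\]
belongs to $L^1_{\mathrm{loc}}\big([0,\infty);\measure\big)$. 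Since $\rho\mapsto\rho\mres(0,1)$ is a linear contraction of $\measure$, it commutes with the Bochner integral, and \eqref{measure_equality} restricts to
\[f(t,\cdot)\,\lebesgue\;=\;\initialvalueac\,\lebesgue\;+\;\int_0^t\big(\nu_s\mres(0,1)\big)\,\dd s\qquad\text{in }\measure,\ \text{for every }t\geq0.\]

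The core of the argument is to show that for a.e. $s$ the measure $\nu_s\mres(0,1)$ is absolutely continuous, with density $h(s,\cdot)$ equal to the right-hand side of the asserted equation. For this one tests each term of $\nu_s$ against $\test\in\bddborelac$ (extended by $0$ at $x=1$), interchanges the order of integration by Fubini, and substitutes $\mu_s\mres(0,1)=f(s,\cdot)\lebesgue$: the gain part of $\tfrac12\opastar{K(\mu_s\otimes\mu_s)}$ becomes $\tfrac12\int_0^x K(x-y,y)f(s,x-y)f(s,y)\,\dd y$ after the substitution $z=x+y$; its loss part becomes $-f(s,x)\int_0^1 K(x,y)f(s,y)\,\dd y$; $\reacc_s\mu_s$ restricts to $f(s,x)\,\reacc_s(x)\,\dd x=f(s,x)\int_1^\infty K(x,y)g(s,y)\,\dd y\,\dd x$; the operator $\opbstar{\mu_s}$ produces, again by Fubini, the loss $\tfrac12\big(\int_0^x F(x-y,y)\,\dd y\big)f(s,x)$ and the gain $\int_0^{1-x}F(x,y)f(s,x+y)\,\dd y$; and $\coreacc_s\lebesgue$ is already absolutely continuous, with density $\coreacc_s(x)=\int_{1-x}^\infty F(x,y)g(s,x+y)\,\dd y$. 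In doing this one must keep track separately of the contributions of any mass $\mu_s$ places at the endpoint $\{1\}$. Each Fubini interchange is licensed by $F\leq2\upfrag$ on $(0,1]^2$ and, for the coagulation terms, by the integrability of the negative moment $\mass_{-\alpha}$ furnished by Theorem \ref{improvement_result} together with the a priori bound of Proposition \ref{a_priori_estimates_fragmentation}; the same estimates give $\norm{h(s,\cdot)}_{L^1(0,1)}\leq\variation{\nu_s}$, so $s\mapsto h(s,\cdot)\in L^1_{\mathrm{loc}}\big([0,\infty);L^1(0,1)\big)$.

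Inserting this into the restricted identity gives $f(t,\cdot)=\initialvalueac+\int_0^t h(s,\cdot)\,\dd s$ in $L^1(0,1)$ with $h\in L^1_{\mathrm{loc}}\big([0,\infty);L^1(0,1)\big)$, so $f$ is absolutely continuous (hence continuous) as a map $[0,\infty)\to L^1(0,1)$ and, by the Lebesgue differentiation theorem for Bochner integrals, is differentiable for a.e. $t$ with $\partial_t f(t,\cdot)=h(t,\cdot)$; the stated pointwise equation then holds for a.e. $t>0$ and a.e. $x\in(0,1)$. The main obstacle is the bookkeeping in the middle step: correctly dualizing $\opastar{\cdot}$ and $\opbstar{\cdot}$, separating the regions $\{x+y<1\}$ and $\{x+y\geq1\}$ in the coagulation gain, verifying the integrability hypotheses for each Fubini interchange — this is exactly where the finiteness of negative moments from Theorem \ref{improvement_result} is indispensable — and handling the endpoint $x=1$, where one must account for any mass that $\mu_s$ carries at $\{1\}$ when reading off the density equation on $(0,1)$.
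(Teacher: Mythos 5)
Your route is the one the paper intends: Corollary \ref{the_corollary} is stated without a separate proof, as the direct consequence of Proposition \ref{ac_sols_fragmentation} together with the identity \eqref{measure_equality} established in the proof of Theorem \ref{improvement_result}, and your steps (restriction of the Bochner integral to $(0,1)$, Fubini on each term, Bochner--Lebesgue differentiation) are exactly how one reads the strong form off that identity, with the correct integrability inputs ($F\leq 2\upfrag$ on $(0,1]^2$, $\mass_{-\alpha}\in L^1_{\text{loc}}$, Proposition \ref{a_priori_estimates_fragmentation}).

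However, the one point you defer as ``bookkeeping'' --- the mass that $\mu_s$ may carry at the endpoint $\{1\}$ --- is not bookkeeping, and your write-up never discharges it. With the paper's extension convention ($\overline{\test}\equiv\test(1)$ on $(1,\infty)$), testing with $\test\in\lip$ vanishing at $1$ gives $\opa{\test}(1,y)=-\test(y)$ and $\opb{\test}(1)=-\int_0^1F(1-y,y)\,\test(y)\dd y$, so an atom $\mu_s(\{1\})>0$ genuinely couples to the bulk: carrying out your density computation produces, in addition to the displayed right-hand side, the terms
\begin{equation*}
-\,K(x,1)\,\mu_t(\{1\})\,f(t,x)\;+\;F(x,1-x)\,\mu_t(\{1\}),
\end{equation*}
which do not appear in the statement. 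To arrive at the corollary as written you would have to show these vanish (i.e. $\mu_t(\{1\})=0$ for a.e. $t$) or explain how they are absorbed, and neither is done; since the coagulation gain pushes all pairs with $x+y\geq 1$ onto the point $\{1\}$, there is no reason for the atom to be null. To be fair, the paper itself elides this: the remark immediately after the corollary concedes that $\mu_t$ has a charge at $\{1\}$ in general while still asserting that the restriction solves \eqref{bdy_coagulation_fragmentation_equation}. So the missing step is as much a wrinkle in the statement and its implicit proof as in your proposal, but a complete argument must either prove the atom is null for a.e.\ time or add the two correction terms above to the equation.
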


\begin{remark}
$\mu_t$ will have a charge at $\{1\}$ in general, but the restriction to the open interval $(0, 1)$ is already a solution to the original equation \eqref{bdy_coagulation_fragmentation_equation} as Corollary \ref{the_corollary} shows.
The extension of $\mu_t$ to $(0,1]$ is a technical tool to ensure a nice compactness in the approximation of the singular problem (see Section \ref{existence}).
\end{remark}

Assume now that $g$ is time independent and satisfies the detailed balance condition (see Definition \eqref{bdy_detailed_balance}).
In this particular case, solutions of the boundary valued coagulation fragmentation equations minimize the \emph{entropy} functional
\begin{equation}
\entropy(t) = \int_{0}^{1} \Big(f(t, x) \Big[\log{\dfrac{f(t, x)}{Q(x)}} - 1\Big] + Q(x) \Big) \dd x.
\end{equation}

Some computations in this subsection are formal but can be rigorously justified by the same approximation argument in \cite[Section 5]{laurenccot2002continuous}.

\begin{proposition}\label{entropy_apriori_estimates}
We have
\begin{equation}\label{entropy_derivative}
\begin{split}
\entropy(0) \geq \entropy(t) &+ \int_{0}^{t} \int_{0}^{1}\int_{0}^{1 - x} \dfrac{1}{2} \energy{K(x, y) f(s, x) f(s, y), F(x, y) f(s, x + y)} \dd y \dd x \dd s \\
&+\int_{0}^{t} \int_{0}^{1}\int_{1 - x}^{1} K(x, y)\, \energy{f(s, x) f(s, y), Q(x) Q(y)} \dd y \dd x \dd s \\
&+\int_{0}^{t} \int_{0}^{1} \int_{1}^{\infty} K(x, y) Q(y) \,\energy{f(s, x), Q(x)} \dd y \dd x \dd s
\end{split}
\end{equation}
where
\begin{equation*}
\energy{a, b} =
\begin{cases}
(a - b) (\log{a} - \log{b}), &\text{ if } a, b > 0,\\
0, &\text{ if } a = b = 0,\\
+ \infty, &\text{ otherwise. }
\end{cases}
\end{equation*}
\end{proposition}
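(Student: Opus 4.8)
The plan is to differentiate the entropy along the solution and to substitute the strong form of the equation from Corollary~\ref{the_corollary} into the result. Since $\partial_a\bigl(a\log\tfrac{a}{Q}-a+Q\bigr)=\log\tfrac{a}{Q}$, one has formally
\begin{equation*}
\frac{\dd}{\dd t}\entropy(t)=\int_{0}^{1}\partial_t f(t,x)\,\log\frac{f(t,x)}{Q(x)}\,\dd x ,
\end{equation*}
so the computation reduces to inserting the six terms on the right-hand side of the identity in Corollary~\ref{the_corollary} and taking the ``test function'' $\test(x)=\log\tfrac{f(t,x)}{Q(x)}$ (equal to $\log\tfrac{f(t,x)}{f_{\infty}(g;x)}$ on $(0,1)$). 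I would carry this out first at a purely formal level, reading off the three dissipation densities, and then justify the manipulations by a truncation argument.

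For the formal part I would reorganize the six integrals by Fubini, the symmetry of $K$ and $F$, and the changes of variables $(x,y)\mapsto(x-y,y)$ that recast integrals over $\{0<y<x<1\}$ as integrals over $\{u,v>0,\ u+v<1\}$, grouping the contributions according to the reaction they encode. There are three groups. The \emph{bulk} group involves three clusters of size in $(0,1)$, i.e.\ $x,y,x+y<1$: it collects the coagulation gain $\tfrac12\int_0^x K(x-y,y)f(x-y)f(y)$, the restriction to $\{x+y<1\}$ of the loss term $-\int_0^1 Kf(x)f(y)$, and the two fragmentation terms with parent of size $<1$; after symmetrizing in $x\leftrightarrow y$ the combination $\test(x+y)-\test(x)-\test(y)$ appears against the rate difference $K(x,y)f(x)f(y)-F(x,y)f(x+y)$. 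The \emph{mixed} group involves two clusters of size in $(0,1)$ producing one of size in $(1,2)$: it collects the restriction to $\{x+y>1\}$ of $-\int_0^1 Kf(x)f(y)$ and the part of $\int_{1-x}^{\infty} F(x,y)g(x+y)$ with $y<1$, and here $\test(x)+\test(y)$ appears against $K(x,y)f(x)f(y)-F(x,y)Q(x+y)$. The \emph{boundary} group involves a cluster of size in $(0,1)$ and a boundary cluster of size $>1$: it collects $-\int_1^\infty Kf(x)g(y)$ and the part of $\int_{1-x}^\infty F(x,y)g(x+y)$ with $y>1$, and $\test(x)$ appears against $K(x,y)f(x)g(y)-F(x,y)g(x+y)$ (no symmetrization, the domain being $x\in(0,1)$, $y>1$).

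Now I would invoke the detailed balance identity $K(x,y)Q(x)Q(y)=F(x,y)Q(x+y)$ of~\eqref{detailed_balance}, together with $g\equiv Q$ on $(1,\infty)$ and $f_{\infty}\equiv Q$ on $(0,1)$. With $\test=\log(f/Q)$ one gets, in the bulk group, $\test(x+y)-\test(x)-\test(y)=\log\tfrac{F(x,y)f(x+y)}{K(x,y)f(x)f(y)}$, so the integrand equals $-(a-b)(\log a-\log b)=-\energy{a,b}$ with $a=K(x,y)f(x)f(y)$ and $b=F(x,y)f(x+y)$; in the mixed group, using $F(x,y)Q(x+y)=K(x,y)Q(x)Q(y)$, it equals $-K(x,y)\,\energy{f(x)f(y),Q(x)Q(y)}$; and in the boundary group, using $g=Q$ on $(1,\infty)$ and the same identity, it equals $-K(x,y)Q(y)\,\energy{f(x),Q(x)}$. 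Collecting, $\tfrac{\dd}{\dd t}\entropy(t)$ equals (formally) minus the sum of the three dissipation densities in~\eqref{entropy_derivative}; integrating from $0$ to $t$ gives the assertion, with equality at this formal level.

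The actual work is the rigorous justification, since $\test=\log(f/Q)$ is neither bounded nor continuous and $f$ may vanish, so the pairings above are not a priori legitimate. Following \cite[Section~5]{laurenccot2002continuous} I would instead test against bounded Lipschitz truncations $\test_{\delta,M}=\bigl(\bigl(\log\tfrac{f+\delta}{Q+\delta}\bigr)\wedge M\bigr)\vee(-M)$ (equivalently, replace $s\log s$ by a concave $C^1$ approximation), which is admissible because $K$ is a bounded coagulation kernel, $\initialvalueac\in L^1(0,1)$, and Corollary~\ref{the_corollary} ensures that $f(t,\cdot)$ is a genuine $L^1$ density with $\partial_t f\in L^1_{\mathrm{loc}}\bigl([0,\infty);L^1(0,1)\bigr)$, so all the integrands are dominated uniformly on $[0,T]$. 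From the resulting identity for the truncated entropy I would let $M\to\infty$ and $\delta\to0$: the entropy terms converge by monotone convergence (the integrand of $\entropy$ is nonnegative and $\entropy(0)<\infty$ by $\initialvalueac\log\initialvalueac\in L^1$ and the integrability of $Q$ forced by $\momentbound_{\max\{\beta,\gamma\}}<\infty$), whereas the dissipation terms are recovered only from below, by Fatou's lemma --- which is exactly why~\eqref{entropy_derivative} is an inequality, not an identity. The main obstacle is this limiting step: controlling the difference between the true and the truncated integrands uniformly in $t$, and establishing the lower semicontinuity of the (possibly infinite) dissipation integrals along the approximation. Once this is in place, the bookkeeping of the two previous steps is routine.
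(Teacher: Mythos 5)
Your proposal follows essentially the same route as the paper: its proof is exactly the formal computation you describe --- differentiate $\entropy$ under the integral sign, test the strong form of the equation with $\test = \log\big(f/Q\big)$, invoke the detailed balance identity $K(x,y)Q(x)Q(y)=F(x,y)Q(x+y)$ together with $g=Q$ on $(1,\infty)$, and integrate in time --- with the rigorous justification deferred, just as you do, to the truncation/approximation argument of \cite[Section 5]{laurenccot2002continuous}. Your write-up is in fact more detailed than the paper's one-line proof, and the grouping into bulk, mixed and boundary contributions reproduces the stated dissipation terms.
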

\begin{proof}
Differentiating under the integral sign
\[\dfrac{\dd \entropy(t)}{\dd t} = \int_{0}^{1} \partial_t f(t, x) \log{\dfrac{f(t, x)}{Q(x)}} \dd x.\]
\eqref{entropy_derivative} then follows by testing the coagulation fragmentation equation with $\test(t, x) = \log{\dfrac{f(t, x)}{Q(x)}},$ using that $K(x, y) Q(x)Q(y)  = F(x, y) Q(x + y)$ and integrating in time.
\end{proof}

\begin{lemma}\label{useful_lemma}
Let $E \subset (0, 1)$ be Lebesgue measurable.
Then for each $N \geq e^2$ there is $C_N > 0$ such that
\begin{equation}
\int_{E} f(t, x) \dd x \leq C_N \int_{E} Q(x) \dd x + \dfrac{2}{\log{N}} \entropy(t)
\end{equation}
\end{lemma}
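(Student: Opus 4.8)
The plan is to split $E$ according to whether $f(t,\cdot)$ is comparable to $Q$ or much larger, and to absorb the ``large'' part into the entropy (this is the classical Dunford--Pettis style argument that converts an entropy bound into equi-integrability). First I would set
\[
E_1 = \{x \in E : f(t, x) \leq N Q(x)\}, \qquad E_2 = \{x \in E : f(t, x) > N Q(x)\},
\]
and recall the elementary inequality $a \log(a/b) - a + b \geq 0$ for $a, b > 0$ (with the conventions $0\log 0 = 0$ and $a\log(a/0) = +\infty$ for $a>0$), which shows that the integrand
\[
j(t, x) := f(t, x)\Big[\log\tfrac{f(t, x)}{Q(x)} - 1\Big] + Q(x)
\]
defining $\entropy(t)$ is nonnegative a.e.\ on $(0, 1)$, so that $\entropy(t)$ is well defined in $[0,+\infty]$. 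If $\entropy(t) = +\infty$ the inequality is trivial, so I may assume $\entropy(t) < \infty$, in which case $j(t, \cdot)$ is finite a.e.\ and hence $\{x : Q(x) = 0,\ f(t, x) > 0\}$ is Lebesgue null and can be discarded.

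On $E_1$ the bound is immediate: $\int_{E_1} f(t, x)\dd x \leq N \int_{E_1} Q(x)\dd x \leq N \int_E Q(x)\dd x$, since $Q \geq 0$. On $E_2$ I would use that $f(t,x)/Q(x) > N \geq e^2$, so $\log\frac{f(t,x)}{Q(x)} > \log N \geq 2$; since $s - 1 \geq s/2$ whenever $s \geq 2$, this yields the pointwise lower bound
\[
j(t, x) \geq f(t, x)\Big[\log\tfrac{f(t, x)}{Q(x)} - 1\Big] \geq \tfrac12\, f(t, x)\log\tfrac{f(t, x)}{Q(x)} \geq \tfrac{\log N}{2}\, f(t, x) \quad \text{on } E_2 .
\]
Dividing and integrating over $E_2$, then enlarging the integration domain to all of $(0,1)$ (permissible because $j(t, \cdot) \geq 0$), gives
\[
\int_{E_2} f(t, x)\dd x \leq \frac{2}{\log N}\int_{E_2} j(t, x)\dd x \leq \frac{2}{\log N}\int_0^1 j(t, x)\dd x = \frac{2}{\log N}\,\entropy(t).
\]
Adding the two estimates yields the claim with $C_N = N$.

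I do not expect a serious obstacle here. The only points requiring care are getting the sign and the limiting conventions right so that the entropy integrand $j$ is genuinely pointwise nonnegative (this is precisely what licenses the final step of passing from $E_2$ to all of $(0,1)$), and the degenerate set where $Q$ vanishes, which is harmless once $\entropy(t) < \infty$ has been assumed as above.
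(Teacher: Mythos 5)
Your argument is correct, and with the explicit constant $C_N = N$: the splitting of $E$ into $\{f \leq N Q\}$ and $\{f > N Q\}$, the pointwise nonnegativity of the entropy integrand $j(t,\cdot)$ (from $a\log(a/b) - a + b \geq 0$), and the elementary bound $s - 1 \geq s/2$ for $s \geq 2$ (which is where $N \geq e^2$ enters) fit together without gaps, and you handle the degenerate cases ($\entropy(t) = +\infty$, the set where $Q$ vanishes) correctly. The only difference from the paper is that the paper does not prove the lemma at all: it simply invokes Lemma 3.1 of Lauren\c{c}ot--Mischler in the special case $\eta = \chi_E$, where the statement is proved for a general bounded weight $\eta$ rather than an indicator. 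Your proof is essentially the standard argument behind that cited lemma, specialized to $\chi_E$; what it buys is a self-contained, two-line-constant proof within the paper, while the citation buys the slightly more general weighted form (which the paper does not actually need here). Either route is fine; if you wanted to match the reference exactly you could note that your argument extends verbatim to $\int \eta f$ with $0 \leq \eta \leq \supnorm{\eta}$ by splitting on $\{f \leq N Q\}$ versus its complement in the same way.
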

\begin{proof}
This is Lemma 3.1. in \cite{laurenccot2002continuous} for the particular case $\eta = \chi_{E}$.
\end{proof}

\begin{lemma}\label{a_priori_asymptotics}
There is a constant $C > 0$ such that
\begin{equation}
\int_{0}^{1} f(t, x) \dd x + \int_{0}^{1} f(t, x)\abs{\log{\dfrac{f(t, x)}{Q(x)}}} \dd x \leq C
\end{equation}
for all $t \geq 0.$
\end{lemma}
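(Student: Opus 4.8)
The plan is to read off both estimates from the entropy inequality of Proposition~\ref{entropy_apriori_estimates}. First I would note that $\entropy$ is sandwiched between $0$ and $\entropy(0)$. The upper bound is immediate from \eqref{entropy_derivative}: the three integral terms appearing there are nonnegative because $\energy{\cdot,\cdot}\geq 0$ and $K$, $F$, $Q$ are nonnegative, so $\entropy(t)\leq\entropy(0)$ for all $t\geq 0$. For the lower bound I would write $r=f(t,x)/Q(x)$ on $\{Q>0\}$ (with $f(t,\cdot)=0$ on $\{Q=0\}$, as needed for $\entropy(t)$ to be finite) and observe that the integrand defining $\entropy$ equals $Q(x)\,(r\log r-r+1)\geq 0$ since $r\log r-r+1\geq 0$ for every $r\geq 0$; hence $\entropy(t)\geq 0$. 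Under the standing hypotheses ($\initialvalueac\log\initialvalueac\in L^1(0,1)$, $\momentbound_{\max\{\beta,\gamma\}}<\infty$, detailed balance) one also has $\entropy(0)<\infty$, so altogether $0\leq\entropy(t)\leq\entropy(0)<\infty$ uniformly in $t$.

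Next I would bound $\int_0^1 f(t,x)\,\dd x$ directly from Lemma~\ref{useful_lemma}: taking $E=(0,1)$ and the admissible value $N=e^2$ gives
\[
\int_0^1 f(t,x)\,\dd x\;\leq\; C_{e^2}\int_0^1 Q(x)\,\dd x+\entropy(t)\;\leq\; C_{e^2}\int_0^1 Q(x)\,\dd x+\entropy(0)\;=:\;C_1,
\]
a finite constant independent of $t$, using the previous step and the finiteness of $\int_0^1 Q$.

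For the entropy-type term I would use the elementary pointwise inequality $s|\log s|\leq s\log s+2e^{-1}$, valid for all $s\geq 0$ with the convention $0\log 0=0$ (it is an equality for $s\geq 1$, and for $0<s<1$ follows from $-s\log s\leq e^{-1}$). Applying it with $s=f(t,x)/Q(x)$ and multiplying by $Q(x)$ yields $f|\log(f/Q)|\leq f\log(f/Q)+\tfrac2e Q$ pointwise. Integrating over $(0,1)$ and rearranging the definition of the entropy, $\int_0^1 f\log(f/Q)\,\dd x=\entropy(t)+\int_0^1 f\,\dd x-\int_0^1 Q\,\dd x$, I would conclude
\[
\int_0^1 f(t,x)\Big|\log\tfrac{f(t,x)}{Q(x)}\Big|\,\dd x\;\leq\;\entropy(t)+\int_0^1 f(t,x)\,\dd x+\Big(\tfrac2e-1\Big)\int_0^1 Q(x)\,\dd x\;\leq\;\entropy(0)+C_1,
\]
and adding the bound of the second step gives the claim with $C=2C_1+\entropy(0)$.

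The routine parts (the two elementary inequalities, the rearrangement, the application of Lemma~\ref{useful_lemma}) are harmless; the real content lies in the finiteness statements hidden in the first step. Namely, Proposition~\ref{entropy_apriori_estimates} is derived formally, so its rigorous version rests on the approximation scheme of \cite{laurenccot2002continuous} (Section~5), and one must keep careful track of the fact that $\log(f/Q)$ only makes sense where $Q>0$; and one must justify $\int_0^1 Q\,\dd x<\infty$ and $\entropy(0)<\infty$ — equivalently $\int_0^1\initialvalueac\,|\log(\initialvalueac/Q)|\,\dd x<\infty$ — from the detailed balance condition (Definition~\ref{bdy_detailed_balance}) together with the moment bounds on $g$, i.e. from the explicit structure of the equilibrium profile $Q=f_\infty(g;\cdot)$ near the endpoints $0$ and $1$. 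That is the step I expect to require the most care.
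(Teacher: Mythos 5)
Your argument is correct and coincides in substance with the paper's: the paper simply quotes Lemma 3.2 of \cite{laurenccot2002continuous}, and your proof is exactly the standard reconstruction of that lemma --- monotonicity and nonnegativity of $\entropy$ from Proposition \ref{entropy_apriori_estimates}, the $L^1$ bound via Lemma \ref{useful_lemma}, and the pointwise inequality $s\abs{\log s}\leq s\log s+2e^{-1}$. You are also right that the only delicate points are the finiteness of $\entropy(0)$ and of $\int_0^1 Q(x)\,\dd x$, which the paper leaves implicit in the citation and in its standing assumptions ($\initialvalueac\log\initialvalueac\in L^1(0,1)$, the detailed-balance structure of $Q=f_\infty(g;\cdot)$, and the formal computations being justified by the approximation argument of \cite{laurenccot2002continuous}).
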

\begin{proof}
This is Lemma 3.2. in \cite{laurenccot2002continuous}.
\end{proof}

\begin{proof}[Proof of Theorem \ref{asymptotics_detailed_balance_result}]
Fix $T > 0$ and let $\{t_n\}_{n \in \naturals} \subset (0, \infty)$ be
a sequence such that $t_n \to \infty$ as $n \to \infty.$
Consider the functions $f_n(t, x) = f(t_n + t, x)$ for $t \in [0, T],$ $x \in (0, 1).$
We will extract a weakly convergent subsequence from $\{f_n\}$ and show that the limit is the equilibrium determined by the detailed balance.

\textbf{Step 1. Precompactness}

Let $\varepsilon > 0$ and $E \subset (0, 1)$ be Lebesgue measurable.
Choosing $N$ large enough and using that $Q \in L^1(0, 1),$ Lemma \ref{useful_lemma} implies that $\int_{E} f(t, x) \dd x < \varepsilon$ if $\lebesgue(E) < \delta$ for some $\delta$ small enough.
The family $\{f_n(t)\}_{n \in \naturals}$ is then equiintegrable for each $t \geq 0.$

By Lemma \ref{a_priori_asymptotics} $\{f_n(t)\}_{n\in\naturals}$ is bounded in $L^1(0, 1).$
The Dunford-Pettis theorem now implies that $\{f_n(t)\}_{n \in \naturals}$ is precompact in the weak $L^1(0, 1)$ topology.

By the estimates for the contraction mapping in the proof of Proposition \ref{wp_bdd_frag} together with the \emph{a priori} bounds in Lemma \ref{a_priori_asymptotics}, there exists a $C > 0$ such that
\begin{equation*}
\abs{\integrate{f_{n}(t) - f_{n}(s)}{\test}} \leq C(1 + \momentbound_{\max{\{\beta, \gamma\}}}) \abs{t - s} \supnorm{\test}
\end{equation*}
for all $t, s \geq 0,$ $\test \in L^{\infty}(0,1).$
It follows that the sequence $\{f_n\}_{n\in\naturals}$ is strongly equicontinuous into $L^1(0, 1)$.

By Arzelà-Ascoli there exists a map $\bar{f} \colon [0, T] \to L^1(0, 1)$ which is continuous with respect to the weak topology in $L^1(0, 1)$ and a subsequence (not relabelled) such that
$f_n(t, \cdot) \to \bar{f}(t, \cdot)$ in the weak topology, uniformly in $t \in [0, T].$

\textbf{Step 2. Stability}

Looking at the last term
in inequality \eqref{entropy_derivative} we notice that
\begin{equation*}
\begin{split}
&\int_{0}^{T}  \int_{0}^{1}\int_{1}^{\infty} K(x, y) Q(y) \,\energy{f_n(t, x), Q(x)} \dd y \dd x \dd t =\\
&\quad = \int_{t_n}^{t_n + T} \int_{0}^{1}\int_{1}^{\infty} K(x, y) Q(y) \,\energy{f(t, x), Q(x)} \dd y \dd x \dd t,
\end{split}
\end{equation*}
which is an expression converging to zero as $n \to \infty.$
Indeed, letting $t \to \infty$ in the \emph{a priori} estimates in Proposition \ref{entropy_apriori_estimates} we see that the time integrals are finite, implying that their tails converge to zero, hence the claim as $t_n \to \infty$ when $n \to \infty.$

Using the inequality
\begin{equation*}
x \leq N y + \dfrac{1}{\log{N}} (x - y) (\log{x} - \log{y})
\end{equation*}
with $N = 1 + \varepsilon$ we have
\[f_n(t,x) - Q(x) \leq \varepsilon Q(x) + \dfrac{1}{\log{(1+\varepsilon)}}\energy{f_n(t, x), Q(x)} \]
\[ Q(x) - f_n(t, x) \leq \varepsilon f_n(t, x) + \dfrac{1}{\log{(1+\varepsilon)}}\energy{f_n(t, x), Q(x)}.\]
By the above, applying Lemma \ref{a_priori_asymptotics} and recalling that $\reacc(x) \leq \powerlaw \momentbound_{\beta}$ for $x \geq 1$ (since $K$ is a \emph{bounded coagulation kernel} according to definition \ref{kernel}) we find that
\begin{equation*}
\begin{split}
&\quad \int_{0}^{T}  \int_{0}^{1} \reacc(x) \abs{f_n(t, x) - Q(x)} \dd x \dd t \leq \\
&\leq  \varepsilon\powerlaw \momentbound_{\beta} T \|Q\|_{L^1} + \varepsilon\powerlaw \momentbound_{\beta} \int_{0}^{T} \int_{0}^{1} f_n(t, x) \dd x \dd t \\
&\quad +\dfrac{1}{\log{(1 + \varepsilon)}} \int_{0}^{T}  \int_{0}^{1}\int_{1}^{\infty} K(x, y) Q(y) \,\energy{f_n(t, x), Q(x)} \dd y \dd x \dd t\\
&\leq C \varepsilon + \dfrac{1}{\log{(1 + \varepsilon)}} \int_{0}^{T} \int_{0}^{1}\int_{1}^{\infty} K(x, y) Q(y) \,\energy{f_n(t, x), Q(x)} \dd y \dd x \dd t
\end{split}
\end{equation*}
for some $C.$
Then
\begin{equation*}
\limsup_{n \to \infty}\int_{0}^{T}  \int_{0}^{1} \reacc(x) \abs{f_n(t, x) - Q(x)} \dd x \dd t \leq  C \varepsilon
\end{equation*}
and since $\reacc(x) > 0$ for all $x \in (0, 1)$ by assumption, we conclude that
\begin{equation}
f_n(t, x) \to Q(x) \text{ for a.e. } t \in [0, T], x \in (0, 1).
\end{equation}
By the uniqueness of the weak limit
$\bar{f} = Q|_{(0, 1)} = f_{\infty}$ and in particular for every sequence with $t_n \to \infty$ we have
$f(t_n) \to f_{\infty}$ in the weak topology, hence the claim.
\end{proof}

\begin{remark}
Notice that in the proof above we have only used the bound of one of the terms in the \emph{a priori} estimates for the entropy.
This is enough to deduce the convergence of $f$ to the equilibrium,
but it might be relevant to check whether the entropy dissipation estimate can be used to obtain information about the rate of convergence to equilibrium of the solution.
\end{remark}

\section{Conclusions}\label{conclusions}

In this paper we have studied boundary value problems for coagulation -fragmentation equations 
which, in spite of the fact that they are of interest in some problems of atmospheric science,
have not yet been considered in the mathematical literature.

We have successfully established the existence of solutions
for the singular kernels considered in the applications
under the sole assumption of boundedness in time for the moments of the boundary datum.

In the case of coagulating systems, we have shown that the size distribution function for small clusters decreases in time at a rate which depends on the moments of the boundary size distribution function.
This result agrees with what is expected under such conditions, as the small clusters can only merge with each other to give larger ones until there are no small particles left.

On the other hand, the addition of a fragmentation mechanism complicates the analysis of the large time asymptotics.
Under the assumptions on the boundary datum in Theorem \ref{asymptotics_detailed_balance_result} (namely, it being time independent and satisfying a detailed balance condition) we have seen that
there exists a decreasing Lyapunov functional implying the convergence of solutions to a unique equilibrium.

However, it is not clear whether convergence to an equilibrium under weaker hypotheses is possible.
The usual detailed balance appearing in the classical problems without boundary involves the coagulation and fragmentation kernels only, but the introduction of boundary conditions
requires to complement this condition with additional assumptions on the boundary data.
This is not surprising, since the coagulation-fragmentation model with nonequilibrium data (even when these are time independent) can be thought of as an open system in contact with reservoirs and it is well known that open chemical systems out of equilibrium might yield an oscillatory behavior \cite{field1974oscillations}.

\section*{Acknowledgments}
The author thanks Juan J. L. Vel\'{a}zquez for the suggestion of the problem as well as for many helpful remarks and conversations during the elaboration of this work.
The author acknowledges financial support from the Spanish Ministry of Economy and Competitiveness (MINECO)
through the Mar\'{i}a de Maeztu Program for Units of Excellence in R\&D (MDM-2014-0445-18-1),
is supported by MINECO grant MTM2017-84214-C2-1-P,
is a member of the Barcelona Graduate School of Mathematics (BGSMath)
and is part of the Catalan research group 2017 SGR 01392.



\end{document}